\newtheorem{theorem}{Theorem}[section]
\newtheorem{prop}[theorem]{Proposition}
\newtheorem{lemma}[theorem]{Lemma}
\newtheorem{prop-def}{Proposition-Definition}[section]
\newtheorem{property}{Property}[section]
\theoremstyle{definition}
\newtheorem{defn}[theorem]{Definition}
\newtheorem{remark}[theorem]{Remark}
\newtheorem{exam}[theorem]{Example}
\newcommand{\nc}{\newcommand}
\nc{\delete}[1]{{}}
\nc{\vsa}{\vspace{-.1cm}} \nc{\vsb}{\vspace{-.2cm}}
\nc{\vsc}{\vspace{-.3cm}} \nc{\vsd}{\vspace{-.4cm}}
\nc{\vse}{\vspace{-.5cm}}
	\nc{\mlabel}[1]{\label{#1}}  
	\nc{\mcite}[1]{\cite{#1}}  
	\nc{\mref}[1]{\ref{#1}}  
	\nc{\meqref}[1]{\eqref{#1}}
	\nc{\mbibitem}[1]{\bibitem{#1}} 
	\nc{\mlabel}[1]{\label{#1}  
		{\hfill \hspace{1cm}{\small\tt{{\ }\hfill(#1)}}}}
	\nc{\mcite}[1]{\cite{#1}{\small{\tt{{\ }(#1)}}}}  
	\nc{\mref}[1]{\ref{#1}{{\tt{{\ }(#1)}}}}  
	\nc{\meqref}[1]{\eqref{#1}{{\tt{{\ }(#1)}}}}
	\nc{\mbibitem}[1]{\bibitem[\bf #1]{#1}} 
\nc{\diff}{d_{\mathfrak{X}_{\infty}}}
\nc{\qd}{d_{\frakX}}
\nc{\qp}{P_{\frakX}}
\nc{\difforg}{U_D}
\nc{\rbforg}{U_{RB}}
\nc{\multforg}{U_{M}}
\nc{\diffree}{{}_DF}
\nc{\difcof}{F_D}
\nc{\rbfree}{{}_{RB}F}
\nc{\rbcof}{F_{RB}}
\nc{\multfree}{{}_{M}F}
\nc{\multcof}{F_M}
\nc{\vep}{\varepsilon}
\nc{\bin}[2]{ (_{\stackrel{\scs{#1}}{\scs{#2}}})}  
\nc{\binc}[2]{(\!\! \begin{array}{c} \scs{#1}\\
		\scs{#2} \end{array}\!\!)}  
\nc{\bincc}[2]{  ( {\scs{#1} \atop
		\vspace{-1cm}\scs{#2}} )}  
\nc{\bs}{\bar{S}}
\nc{\la}{\longrightarrow}
\nc{\ot}{\otimes}
\nc{\rar}{\rightarrow}
\nc{\dar}{\downarrow}
\nc{\dap}[1]{\downarrow \rlap{$\scriptstyle{#1}$}}
\nc{\defeq}{\stackrel{\rm def}{=}}
\nc{\dis}[1]{\displaystyle{#1}}
\nc{\dotcup}{\ \displaystyle{\bigcup^\bullet}\ }
\nc{\hcm}{\ \hat{,}\ }
\nc{\hts}{\hat{\otimes}}
\nc{\hcirc}{\hat{\circ}}
\nc{\lleft}{[}
\nc{\lright}{]}
\nc{\curlyl}{\left \{ \begin{array}{c} {} \\ {} \end{array}
	\right .  \!\!\!\!\!\!\!}
\nc{\curlyr}{ \!\!\!\!\!\!\!
	\left . \begin{array}{c} {} \\ {} \end{array}
	\right \} }
\nc{\longmid}{\left | \begin{array}{c} {} \\ {} \end{array}
	\right . \!\!\!\!\!\!\!}
\nc{\ora}[1]{\stackrel{#1}{\rar}}
\nc{\ola}[1]{\stackrel{#1}{\la}}
\nc{\scs}[1]{\scriptstyle{#1}} \nc{\mrm}[1]{{\rm #1}}
\nc{\dirlim}{\displaystyle{\lim_{\longrightarrow}}\,}
\nc{\invlim}{\displaystyle{\lim_{\longleftarrow}}\,}
\nc{\dislim}[1]{\displaystyle{\lim_{#1}}} \nc{\colim}{\mrm{colim}}
\nc{\mvp}{\vspace{0.3cm}} \nc{\tk}{^{(k)}} \nc{\tp}{^\prime}
\nc{\ttp}{^{\prime\prime}} \nc{\svp}{\vspace{2cm}}
\nc{\vp}{\vspace{8cm}}
\nc{\modg}[1]{\!<\!\!{#1}\!\!>}
\nc{\intg}[1]{F_C(#1)}
\nc{\lmodg}{\!<\!\!}
\nc{\rmodg}{\!\!>\!}
\nc{\cpi}{\widehat{\Pi}}
\nc{\sha}{{\mbox{\cyr X}}}  
\nc{\ssha}{{\mbox{\cyrs X}}} 
\nc{\tsha}{{\mbox{\cyrt X}}}
\nc{\shpr}{\diamond}    
\nc{\labs}{\mid\!}
\nc{\rabs}{\!\mid}
\font\cyr=wncyr10
\font\cyrs=wncyr7
\font\cyrt=wncyr5
\nc{\ann}{\mrm{ann}}
\nc{\Aut}{\mrm{Aut}}
\nc{\can}{\mrm{can}}
\nc{\Cont}{\mrm{Cont}}
\nc{\rchar}{\mrm{char}}
\nc{\cok}{\mrm{coker}}
\nc{\dtf}{{R-{\rm tf}}}
\nc{\dtor}{{R-{\rm tor}}}
\nc{\Div}{{\mrm Div}}
\nc{\End}{\mrm{End}}
\nc{\Ext}{\mrm{Ext}}
\nc{\Fil}{\mrm{Fil}}
\nc{\Fr}{\mrm{Fr}}
\nc{\Frob}{\mrm{Frob}}
\nc{\Gal}{\mrm{Gal}}
\nc{\GL}{\mrm{GL}}
\nc{\Hom}{\mrm{Hom}}
\nc{\hsr}{\mrm{H}}
\nc{\hpol}{\mrm{HP}}
\nc{\id}{\mrm{id}}
\nc{\im}{\mrm{im}}
\nc{\incl}{\mrm{incl}}
\nc{\length}{\mrm{length}}
\nc{\mchar}{\rm char}
\nc{\mpart}{\mrm{part}}
\nc{\ql}{{\QQ_\ell}}
\nc{\rank}{\mrm{rank}}
\nc{\rcot}{\mrm{cot}}
\nc{\rdef}{\mrm{def}}
\nc{\rdiv}{{\rm div}}
\nc{\rtf}{{\rm tf}}
\nc{\rtor}{{\rm tor}}
\nc{\res}{\mrm{res}}
\nc{\SL}{\mrm{SL}}
\nc{\Spec}{\mrm{Spec}}
\nc{\tor}{\mrm{tor}}
\nc{\Tr}{\mrm{Tr}}
\nc{\tr}{\mrm{tr}}
\nc{\ab}{\mathbf{Ab}}
\nc{\Set}{\mathbf{Set}}
\nc{\DRB}{\mathbf{DRB}}
\nc{\OPA}{\mathbf{OPA}}
\nc{\Alg}{{\mathbf{Alg}}}
\nc{\ALG}{{\mathbf{ALG}}}
\nc{\RB}{\mathbf{RB}}
\nc{\RBA}{\mathbf{RBA}}
\nc{\PDRB}{\mathbf{PDRB}}
\nc{\bfk}{{\bf k}}
\nc{\bfone}{{\bf 1}}
\nc{\bfzero}{{\bf 0}}
\nc{\detail}{\marginpar{\bf More detail}
	\noindent{\bf Need more detail!}
	\svp}
\nc{\Diff}{\mathbf{Diff}}
\nc{\gap}{\marginpar{\bf Incomplete}\noindent{\bf Incomplete!!}
	\svp}
\nc{\FMod}{\mathbf{FMod}}
\nc{\Int}{\mathbf{Int}}
\nc{\Mon}{\mathbf{Mon}}
\nc{\Mult}{{\mathbf{Mlt}}}
\nc{\remarks}{\noindent{\bf Remarks: }}
\nc{\Rep}{\mathbf{Rep}}
\nc{\Rings}{\mathbf{Rings}}
\nc{\Sets}{\mathbf{Sets}}
\nc{\BA}{{\mathbb A}}
\nc{\CC}{{\mathbb C}}
\nc{\DD}{{\mathbb D}}
\nc{\EE}{{\mathbb E}}
\nc{\FF}{{\mathbb F}}
\nc{\GG}{{\mathbb G}}
\nc{\HH}{{\mathbb H}}
\nc{\LL}{{\mathbb L}}
\nc{\NN}{{\mathbb N}}
\nc{\PP}{{\mathbb P}}
\nc{\QQ}{{\mathbb Q}}
\nc{\RR}{{\mathbb R}}
\nc{\TT}{{\mathbb T}}
\nc{\VV}{{\mathbb V}}
\nc{\ZZ}{{\mathbb Z}}
\nc{\TP}{\widetilde{P}}
\nc{\lp}{\widehat{P}}
\nc{\lpt}{\widehat{P}}
\nc{\lqb}{\widehat{Q}}
\nc{\lqt}{\widehat{Q}^{\, \omega}}
\nc{\ld}{\hat{d}}
\nc{\ldt}{\hat{d}^{\, \omega}}
\nc{\lqst}{\hat{q}^{\, \omega}}
\nc{\lqs}{\hat{q}}
\nc{\lpp}{\overline{P'}}
\nc{\dee}{\mathrm{Deg}}
\nc{\Mpf}{\mathbf{M}}
\nc{\m}{\iota}
\nc{\ots}{\otimes}
\nc{\faa}{\mathfrak{a}_1}
\nc{\fbb}{\mathfrak{b}_1}
\nc{\fa}{{\mathfrak a}}
\nc{\fb}{\mathfrak{b}}
\nc{\frakB}{{\frak B}} \nc{\frakm}{{\frak
		m}} \nc{\frakM}{{\frak M}}
\nc{\frakp}{{\frak p}}
\nc{\frakS}{{\frak S}}
\nc{\frakA}{{\frak A}} \nc{\frakx}{{\frakx}}
\nc{\frakSO}{{\mathfrak S_\Omega}}
\nc{\frakMO}{\mathfrak M_\Omega}
\nc{\Dif}{\mathbf{DIF}}
\nc{\DIF}{\mathbf{DIF}}
\nc{\ADR}{\mathbf{ADR}}
\nc{\OA}{\mathbf{OA}}
\nc{\ODA}{\mathbf{ODA}}
\nc{\ORB}{\mathbf{ORB}}
\nc{\DaRB}{\mathbf{DaRB}}
\nc{\G}{\mathbf{G}}
\nc{\C}{\mathbf{C}}
\nc{\A}{\mathbf{A}}
\nc{\B}{\mathbf{B}}
\nc{\T}{\mathbf{T}}
\nc{\bz}{b}
\nc{\tred}[1]{\textcolor{red}{#1}} \nc{\tgreen}[1]{\textcolor{green}{#1}}
\nc{\tblue}[1]{\textcolor{blue}{#1}} \nc{\tpurple}[1]{\textcolor{purple}{#1}}
\nc{\li}[1]{\tred{#1}}
\nc{\lir}[1]{\tred{\underline{Li:} #1}}
\nc{\ani}[1]{\tblue{#1 }}
\nc{\anir}[1]{\tblue{\underline{Ani:}#1 }}
\nc{\slr}[1]{\tpurple{\underline{Shilong:}#1 }}
\nc{\pdrba}{para-differential Rota-Baxter algebra\xspace}
\nc{\pdrbas}{para-differential Rota-Baxter algebras\xspace}
\nc{\Pdrba}{Para-differential Rota-Baxter algebra\xspace}
\nc{\Pdrbas}{Para-differential Rota-Baxter algebras\xspace}
\nc{\addx}{d_c\xspace}
\nc{\da}{d_c\xspace}
\nc{\der}{\frac{{\rm d}}{{\rm d}x}}
\nc{\px}{\frac{{\rm \partial}}{{\rm \partial}x}}
\nc{\bxddx}{d_{bx}\xspace}
\nc{\dbx}{d_{bx}\xspace}
\nc{\addt}{d_c\xspace}
\nc{\bxddt}{d_{bx}\xspace}
\nc{\afx}{\left(\sum\limits_{k=0}^{n}a_kx^k\right)\xspace}
\nc{\aft}{\left(\sum\limits_{k=0}^{n}a_kt^k\right)\xspace}
\nc{\bfx}{\left(\sum\limits_{k=0}^{n}b_kx^k\right)\xspace}
\nc{\bft}{\left(\sum\limits_{k=0}^{n}b_kt^k\right)\xspace}
\nc{\intx}{\int_{0}^{x}\xspace}
\nc{\free}[1]{\cald_\lambda\langle{#1}\rangle}
\nc{\freec}[1]{\cald_\lambda[{#1}]}
\nc{\freed}[1]{\cald_\lambda({#1})}
\nc{\freecd}[1]{\calc\cald_\lambda({#1})}
\nc{\freel}[1]{\call_\lambda({#1})}
\newcommand{\bk}{{\mathbf{k}}}
\nc{\Diffl}{\mathsf{DA}_\lambda}
\nc{\diffo}{{\mathsf{DO}_\lambda}}
\nc{\pd}{{\mathrm{PD}}}
\nc{\dRB}{{\mathrm{\Phi}_\mathsf{DRB}}}
\nc{\udRB}{{\mathrm{\Phi}_\mathsf{uDRB}}}
\nc{\OdRB}{{\mathrm{\Phi}_\mathsf{DRB}^0}}
 \nc{\OudRB}{{\mathrm{\Phi}_\mathsf{uDRB}^0}}
\nc{\inte}{{\mathrm{\Phi}_\mathsf{ID}}}
\nc{\uinte}{{\mathrm{\Phi}_\mathsf{uID}}}
\nc{\Ointe}{{\mathrm{\Phi}_\mathsf{ID}^0}}
 \nc{\Ouinte}{{\mathrm{\Phi}_\mathsf{uID}^0}}
\nc{\alg}{\mathsf{Alg}}
\nc{\End}{\mrm{End}}
\nc{\Ext}{\mrm{Ext}}
\nc{\Fil}{\mrm{Fil}}
\nc{\Fr}{\mrm{Fr}}
\nc{\Frob}{\mrm{Frob}}
\nc{\Gal}{\mrm{Gal}}
\nc{\GL}{\mrm{GL}}
\nc{\Hom}{\mrm{Hom}}
\nc{\Hoch}{\mrm{Hoch}}
\nc{\Id}{\mrm{Id}}
\nc{\ID}{\mrm{ID}}
\nc{\Irr}{\mrm{Irr}}
\nc{\NLSW}{\mrm{NLSW}}
\nc{\Lie}{\mrm{Lie}}
\nc{\Nij}{\mrm{Nij}}
\nc{\Rey}{\mrm{Rey}}
\nc{\rtf}{{\rm tf}}
\nc{\rtor}{{\rm tor}}
\nc{\res}{\mrm{res}}
\nc{\SL}{\mrm{SL}}
\nc{\Spec}{\mrm{Spec}}
\nc{\tor}{\mrm{tor}}
\nc{\Tr}{\mrm{Tr}}
\nc{\tr}{\mrm{tr}}
\nc{\wt}{\mrm{wt}}
\def\ot{\otimes}
\nc{\udl}{{\mathrm{udl}}}
\nc{\bfk}{{\bf k}}
\nc{\bfone}{{\bf 1}}
\nc{\bfzero}{{\bf 0}}
\nc{\detail}{\marginpar{\bf More detail}
	\noindent{\bf Need more detail!}
	\svp}
\nc{\gap}{\marginpar{\bf Incomplete}\noindent{\bf Incomplete!!}
	\svp}
\nc{\FMod}{\mathbf{FMod}}
\nc{\Int}{\mathbf{Int}}
\nc{\remarks}{\noindent{\bf Remarks: }}
\nc{\zhx}{\text{-}}
\nc{\BA}{{\mathbb A}}   \nc{\CC}{{\mathbb C}}
\nc{\DD}{{\mathbb D}}   \nc{\EE}{{\mathbb E}}
\nc{\FF}{{\mathbb F}}   \nc{\GG}{{\mathbb G}}
\nc{\HH}{{\mathbb H}}   \nc{\LL}{{\mathbb L}}
\nc{\NN}{{\mathbb N}}   \nc{\PP}{{\mathbb P}}
\nc{\QQ}{{\mathbb Q}}   \nc{\RR}{{\mathbb R}}
\nc{\TT}{{\mathbb T}}   \nc{\VV}{{\mathbb V}}
\nc{\ZZ}{{\mathbb Z}}   \nc{\TP}{\widetilde{P}}
\nc{\cala}{{\mathcal A}}    \nc{\calc}{{\mathcal C}}
\nc{\cald}{\mathcal{D}}     \nc{\cale}{{\mathcal E}}
\nc{\calf}{{\mathcal F}}    \nc{\calg}{{\mathcal G}}
\nc{\calh}{{\mathcal H}}    \nc{\cali}{{\mathcal I}}
\nc{\call}{{\mathcal L}}    \nc{\calm}{{\mathcal M}}
\nc{\caln}{{\mathcal N}}    \nc{\calo}{{\mathcal O}}
\nc{\calp}{{\mathcal P}}    \nc{\calr}{{\mathcal R}}
\nc{\cals}{{\mathcal S}}    \nc{\calt}{{\Omega}}
\nc{\calv}{{\mathcal V}}    \nc{\calw}{{\mathcal W}}
\nc{\calx}{{\mathcal X}}    \nc{\calu}{{\mathcal U}}
\nc{\caly}{{\mathcal Y}}
\nc{\msp}{ \ssha_\lambda  }
\nc{\uOpAlg}{{\mathfrak{uOpAlg}}}
\nc{\OpAlg}{{\mathfrak{OpAlg}}}
\nc{\ComOpAlg}{{\mathfrak{ComOpAlg}}}
\nc{\OpVect}{{\mathfrak{OpVect}}}
\nc{\OpSet}{{\mathfrak{OpSet}}}
\nc{\OpMon}{{\mathfrak{OpMon}}}
\nc{\ComOpMon}{{\mathfrak{ComOpMon}}}
\nc{\OpSem}{{\mathfrak{OpSem}}}
\nc{\ComOpSem}{{\mathfrak{ComOpSem}}}
\nc{\uAlg}{{\mathfrak{uAlg}}}
\nc{\ComAlg}{{\mathfrak{ComAlg}}}
\nc{\Vect}{{\mathfrak{Vect}}}
\nc{\ComMon}{{\mathfrak{ComMon}}}
\nc{\Sem}{{\mathfrak{Sem}}}
\nc{\mtOpSet}{{\Omega\zhx\mathfrak{Set}}}
\nc{\mtOpSem}{{\Omega\zhx\mathfrak{Sem}}}
\nc{\mtOpMon}{{\Omega\zhx\mathfrak{Mon}}}
\nc{\mtOpVect}{{\Omega\zhx\mathfrak{Vect}}}
\nc{\mtOpAlg}{{\Omega\zhx\mathfrak{Alg}}}
\nc{\mtuOpAlg}{{\Omega\zhx\mathfrak{uAlg}}}
\nc{\ComSem}{\mathfrak{ComSem}}
\nc{\rmI}{{\mathrm{I}}}
\nc{\rmII}{{\mathrm{II}}}
\nc{\rmIII}{{\mathrm{III}}}
\nc{\rmT}{{\mathrm{T}}}
\nc{\frakg}{{\frak g}}
\nc{\frakl}{{\frak l}}
\nc{\fraks}{{\frak s}}
\nc{\frakW}{{\frak W}}
\nc{\frakX}{{\frak X}}
\nc{\frakMstar}{{\mathfrak{M}_\Omega^\star}}
\nc{\frakSstar}{{\mathfrak{S}_\Omega^\star}}
\nc{\ilambda}{\lambda^{-1}}
\nc{\lc}{\lfloor}
\nc{\rc}{\rfloor}
\nc{\dl}{\mathrm{dlex}}
\nc{\glex}{\mathrm{glex}}
\nc{\diaglex}{\mathrm{diaglex}}
\nc{\deglex}{\mathrm{deglex}} \nc{\db}{\mathrm{db}} \nc{\lex}{\mathrm{lex}} \nc{\clex}{\mathrm{clex}} \nc{\dgp}{\mathrm{dgp}} \nc{\dgx}{\mathrm{dgx}} \nc{\br}{\mathrm{b}} \nc{\obd}{\mathrm{odb}} \nc{\ob}{\mathrm{ob}}
\nc{\dz}{\mathrm{dz}}
\nc{\ed}{\mathrm{ed}}
\nc{\p}{\mathrm{p}}
\nc{\dd}{\mathrm{d}}
\nc{\dlex}{\mathrm{Dlex}}
\nc{\zdp}{\mathrm{dp}}
\nc{\bre}[1]{|#1|}
\nc\kdot{\bfk\hspace{-2pt}\cdot\hspace{-2pt}}
\nc{\medmid}{{\,~{\tiny \longmid}~\,}}
\nc{\modk}{{\rm\ mod\,}}
\nc{\brp}{\mathrm{brp}}
\nc{\brd}{\mathrm{brd}}
\nc{\mpu}{u^{\ast}}
\nc{\mpv}{v^{\ast}}
\nc{\mpw}{w^{\ast}}
\nc{\mpx}{x^{\ast}}
\nc{\dps}{{\bf \dotplus}}
\nc{\dx}{d_\frakX}
\nc{\fx}{\mathfrak{X}_{\infty}}
\nc{\dep}{\mathrm{dep}}
\nc{\pfx}{P_\frakX}
\begin{document}
	\title[Para-differential Rota-Baxter algebras]{Para-differential Rota-Baxter algebras \\ and their free objects by Gr\"obner-Shirshov bases}
\author{Li Guo}
\address{Department of Mathematics and Computer Science,
	Rutgers University,
	Newark, NJ 07102, USA}
\email{liguo@rutgers.edu}

\author{Aniruddha Talele}
\address{Department of Mathematics and Computer Science,
	Rutgers University,
	Newark, NJ 07102, USA}
\email{aniruddha.talele8@gmail.com}

\author{Shilong Zhang}
\address{College of Science, Northwest A$\&$F University, Yangling 712100, Shaanxi, China}
\email{shlz@nwafu.edu.cn}

\author{Shanghua Zheng}
\address{School of Mathematics and Statistics, Jiangxi Normal University, Nanchang, Jiangxi 330022, China}
\email{zhengsh@jxnu.edu.cn}

\date{\today}
\begin{abstract}
The algebraic formulation of the derivation and integration related by the First Fundamental Theorem of Calculus (FFTC) gives rise to the notion of differential Rota-Baxter algebra. The notion has a remarkable list of categorical properties, in terms of the existence of (co)extensions of differential and Rota-Baxter operators, of the lifting of monads and comonads, and of mixed distributive laws. Conversely, using these properties as axioms leads to a class of algebraic structures called \pdrbas. 

This paper carries out a systematic study of \pdrbas. After their basic properties and examples from Hurwitz series and difference algebras, a Gr\"obner-Shirshov bases theory is established for \pdrbas. Then an explicit construction of free \pdrbas is obtained.
	\end{abstract}
	
\subjclass[2020]{
17B38, 
16Z10, 
18C15, 
	13N99, 
	16S10, 
	16W99 
	}
	
	\keywords{
	differential Rota-Baxter algebra, Rota-Baxter algebra, differential algebra, first foundamental theorem of calculus, Gr\"obner-Shirshov basis, free algebra, monad}
	
	\maketitle
	
	\tableofcontents
	
	\setcounter{section}{0}
	
	\allowdisplaybreaks
	
\section{Introduction}
This paper gives a systematic study of a class of algebraic structures originated from differential and integral analysis related by the First Fundamental Theorem of Calculus (FFTC), and then characterized by categorical properties. The free objects are constructed applying the method of Gr\"obner-Shirshov bases. 
\vspace{-.2cm}	
\subsection{Algebraic structures from the FFTC}
The algebraic study of analysis has a rich history. In his landmark work~\mcite{Ri} a century ago, Ritt  introduced  differential algebra to provide an algebraic framework for differential analysis and differential equations. A differential algebra is an associative algebra $R$ equipped with a linear operator $d$ that satisfies the following Leibniz rule.
\vspace{-.1cm}
	\begin{equation}
		d(xy)=d(x)y+xd(y)\ \text{ for all }\ x,\ y\in R.
		\mlabel{eq:der10}
	\end{equation}
Through the contributions of numerous mathematicians, including Kolchin, Kaplansky, and Singer, differential algebra has evolved into a substantial field encompassing differential Galois groups, differential algebraic groups, and differential algebraic geometry. This area has also found applications in number theory, logic, and the mechanical proof of mathematical theorems~\mcite{Kol,SP,Wu2}.
	
As an algebraic abstraction of the difference quotient
$(f(x+\lambda)-f(x))/\lambda$ for a nonzero scalar $\lambda$, a {\bf differential operator of weight $\lambda$}~\mcite{GK3} is defined to satisfy the equation
\vspace{-.1cm}
\begin{equation}
	d(xy)=d(x)y+xd(y)+\lambda d(x)d(y)\ \text{ for all }\ x,\ y\in R
	\mlabel{eq:der1}
\vspace{-.1cm}
\end{equation}
and $d(\bfone_{R})=0.$

On the other hand, the algebraic abstraction of integral analysis emerged as a byproduct of G. Baxter's probability study in 1960~\mcite{Ba}. A Baxter algebra, later known as a {\bf Rota-Baxter algebra}, is defined as an algebra $R$ equipped with a linear operator $P$ that satisfies the following identity:
\vspace{-.1cm}
	\begin{equation}
		P(x)P(y)=P(P(x)y)+P(xP(y))+\lambda P(xy)\ \text{ for all }\ x,\ y\in R.
		\mlabel{eq:bax1}
	\end{equation}
Here $\lambda$ is a fixed constant and is called the {\bf weight} of the Rota-Baxter operator.
When $\lambda=0$, the identity is just the integration-by-parts formula in a special case.

After the pioneering work of Cartier and Rota~\mcite{Ca,Ro} in combinatorics, the rapid recent developments of Rota-Baxter algebras have led to applications ranging from multiple zeta values in number theory to renormalization of perturbation quantum field theory~\mcite{Bai,CK,Gub,GK1,GZ,RR,Ro}.
	
The differential and integral operators are related by the
well-known First Fundamental Theorem of Calculus (FFTC).
As the algebraic abstraction of FFTC, the notion of a {\bf differential Rota-Baxter algebra with weight $\lambda$} was introduced~\mcite{GK3} as a triple $(R,d,P)$ such that
	\begin{enumerate}
\item
$(R,d)$ is	a differential algebra of weight $\lambda$,
		\item
$(R,P)$ is a Rota-Baxter algebra of weight $\lambda$,
\item	the following algebraic abstraction of the FFTC holds.
\vspace{-.2cm}
	\begin{equation}
		d P = \id_R.
		\mlabel{eq:fftc}
	\end{equation}
	\end{enumerate}
A variation of the differential Rota-Baxter algebra is the integro-differential algebra, studied in~\mcite{GGR,GRR,RR}.
\vspace{-.3cm}
\subsection{Categorical properties of the FFTC and related structures}

The abstraction of the FFTC as the differential Rota-Baxter algebra allows further algebraic study of the analyses. In this direction, a categorical approach was carried out in~\mcite{ZGK,ZGK2,ZGK3}, leading to the \pdrbas that will be studied in this paper. 

To give adequate justification for their study, we next provide a summary of the categorical investigation~\mcite{ZGK,ZGK2,ZGK3}, attracted interests from combinatorics, differential algebra, probability and computer science~\mcite{BCLS,CL,Fr,Ja,St}.
Readers who are not concerned about the motivation can safely skip to Section~\mref{ss:outline}. 

The categorical investigation~\mcite{ZGK,ZGK2,ZGK3} was carried out in three stages, one in each of the three articles.
\subsubsection{Stage 1: categorical implications of FFTC}
The finding in~\mcite{ZGK} is that this natural algebraic abstraction of the FFTC as differential Rota-Baxter algebra has distinguished categorical implications in terms of liftings of monads and mixed distributive laws.

To give a sketch of the main results in~\mcite{ZGK}, we fix an algebra $A$. Let $A^\NN$ denotes the Hurwitz series algebra over $A$~\mcite{Ke,GK3}. Equipped with a natural differential operator $\partial_A$, $A^\NN$ is the cofree differential algebra on $A$. Categorically, there is a comonad $\mathbf{C}$ giving differential algebras. On the other hand, let $\sha(A)$ be the mixable shuffle product algebra~\mcite{GK1}. Then with a naturally defined Rota-Baxter operator $P_A$, the pair $(\sha(A), P_A)$ is the free Rota-Baxter algebra on $A$. This leads to a monad $\mathbf{T}$ giving Rota-Baxter algebras. In~\mcite{GK3}, a differential operator on $A$ is uniquely extended to one on $\sha(A)$, enriching $\sha(A)$ to a differential Rota-Baxter algebra, giving the free differential Rota-Baxter algebra on $A$. This is interpreted as a lifting of the monad $\mathbf{T}$. Reversing the order, a Rota-Baxter operator on $A$ is uniquely extended to one on $A^\NN$, again enriching $A^\NN$ to a differential Rota-Baxter algebra, yielding the cofree differential Rota-Baxter algebra on $A$. This means that there is a lifting of the comonad $\mathbf{C}$. Further, by the lifting, we obtain a distributive law of the monad $\mathbf{T}$ over the comonad $\mathbf{C}$ and vice versa. 
	
In summary, the coupling of a differential operator and a Rota-Baxter operator via FFTC has the following properties. 
\begin{property}
{\bf (FFTC categorical properties)}:
\begin{enumerate}
\item extensions of differential operators,
\item coextensions (covers) of Rota-Baxter operators, 
\item liftings of the comonad $\mathbf{C}$, 
\item lifting of the monad $\mathbf{T}$, 
\item existence of a mixed distributive laws of $\mathbf{T}$ over $\mathbf{C}$. 
\end{enumerate}
\mlabel{p:cat}
\end{property}

\subsubsection{Stage 2: FFTC categorical properties for other constrains}

The above rich categorical properties of differential Rota-Baxter algebras defined by FFTC naturally leads one to ask whether there are analogs of FFTC that also satisfy the FFTC categorical properties in Property\,\mref{p:cat}. 

A follow-up study of \mcite{ZGK} was carried out in~\mcite{ZGK2} to address this question. There the operator identity in the FFTC is viewed as an instance of a polynomial identity in two noncommutative variables $x, y$ symbolizing the differential operator and Rota-Baxter operator respectively. Then a polynomial $\omega=\omega(x,y)\in \bfk\langle x,y\rangle$ is regarded as a constraint between a differential operators $x=d$ and a Rota-Baxter operator $y=P$ on an algebra $R$, with the instance $\omega(x,y)=xy-1$ giving the FFTC in Eq.~\meqref{eq:fftc}. 
In general, a triple $(R,d,P)$ is called a {\bf para-differential Rota-Baxter algebra of type $\omega$} if $x=d$ and $y=P$ satisfy the constrain given by the $\omega(x,y)$. 

The remarkable discovery of \mcite{ZGK2} is that, for a constrain $\omega$ in
\begin{equation}
\calt:=xy+\bfk[x]+y\bfk[x] = \left\{
		xy-(\phi(x) +y\psi(x))\,|\, \phi,\psi\in\bfk[x]\right\},
		\mlabel{eq:T}
	\end{equation}
the FFTC categorical properties in Property\,\mref{p:cat} are in fact {\it mutually equivalent}. In other words, para-differential Rota-Baxter algebras of a given type $\omega$ either satisfy all of the properties or none of the properties. 
See \cite[Theorem 3.15]{ZGK2} for the precise statement. 

\subsubsection{Stage 3: classification of FFTC type algebraic structures}

The next stage of the study, carried out in \mcite{ZGK3}, is to explicitly determine the types $\omega\in \calt$ that satisfy any, and hence all, of the FFTC categorical properties. 

Consider the following two subsets of $\calt$:
\begin{equation}
\calt_0:=\{xy-a_0\,|\, a_0 \in \bfk\}\cup \{xy-\left(by+yx\right)\,|\, b\in \bfk\}, \quad \calt_{\bfk}:=\{xy, xy-1, xy-yx\}.
\mlabel{eq:pdrbrel}
\end{equation}
It is shown in\,\mcite{ZGK3} that, for $\omega\in \Omega$, 
the type-$\omega$ para-differential Rota-Baxter algebra with weight zero (resp. arbitrary weight) has any, and hence all, of the FFTC categorical properties in Property\,\mref{p:cat} if and only if $\omega$ is in $\Omega_0$ (resp. in $\Omega_\bfk$).

\subsection{The study of FFTC type algebras and layout of the paper}
\mlabel{ss:outline}

Since the type-$\omega$ differential Rota-Baxter algebras classified in Eq.\,\meqref{eq:pdrbrel} are characterized by the same categorical properties as the differential Rota-Baxter algebra, they deserve a careful study as in the case of differential Rota-Baxter algebras initiated in~\mcite{GK3}. This is the purpose of the present paper.

Henceforth in this paper, by a {\bf \pdrba} we mean a \pdrba of type $\omega$ with $\omega$ in $\Omega_0$ or $\Omega_\bfk$.

Here is an outline of this paper.

In Section~\mref{s:propex}, we organize the \pdrbas into three types, in Definition~\mref{de:pdrbtype}. We then give some properties and examples of \pdrbas from Hurwitz series and difference algebras. 

Section~\mref{s:nogsb} introduces a new monomial order tailored to our study (Proposition\,\mref{prop:monom}), and gives a discussion of Gr\"obner-Shirshov bases for operated algebras (Theorem\,\mref{thm:CDL}).  

In Section~\mref{s:gsbqdrb}, the general theory in Section~\mref{s:nogsb} is applied to obtain Gr\"obner-Shirshov bases for \pdrbas (Theorem\,\mref{thm:phigsb}).

The Gr\"obner-Shirshov bases for \pdrbas in Section~\mref{s:gsbqdrb} provide a linear basis for the free (noncommutative) \pdrbas. In Section~\mref{s:explicit}, the multiplication, and differential and Rota-Baxter operators are concretely defined, thus providing an explicit construction of free \pdrbas (Theorems\,\mref{thm:main-type1} and \mref{thm:ncfqdba}).

\noindent
{\bf Notations. }	
Throughout the paper, we fix a commutative ring $\bfk$ with identity and an element $\lambda \in \bfk$. Unless otherwise noted, we work in the categories of associative $\bfk$-algebras with identity, with or without linear operators. All operators and tensor products are also taken over $\bfk$. Thus references to $\bfk$ will be suppressed unless doing so can cause confusion.
We let $\NN$ denote the additive monoid of natural numbers $\{0,1,2,\ldots\}$ and
	$\NN_+=\{ n\in \NN\mid n>0\}$ the positive integers.
	
\section{\Pdrbas}
\mlabel{s:propex}
We first give a classification of \pdrbas into three types that will be studied more carefully later in this paper. We then give some properties and examples of these \pdrbas from Hurwitz series and endo algebras. 

\subsection{Classification of \pdrbas}
As discussed before, a \pdrba is an algebra $R$ equipped with a differential operator $d$ and a Rota-Baxter operator $P$, both of weight $\lambda$, such that $d$ and $P$ satisfy one of the operator identities in the sets $\Omega_0$ if $\lambda=0$ and $\Omega_{\bfk}$ for arbitrary $\lambda\in \bfk$ in Eq.~\meqref{eq:pdrbrel}: 
\begin{equation}
	\calt_0:=\{dP-a_0\id\,|\, a_0 \in \bfk\}\cup \{dP-\left(bP+Pd\right)\,|\, b\in \bfk\}, \quad \calt_{\bfk}:=\{dP, dP-\id, dP-Pd\}.
	\mlabel{eq:pdrbrel2}
\end{equation}

In order to give a more focused and unified study of \pdrbas, we first provide a preliminary analysis and classification. 

First the case of $dP-\id$ in $\Omega_{\bfk}$ corresponds to the First Fundamental Theorem of Calculus and has been systematically treated in~\mcite{GK3}. Let us next consider $dP-a\id$ in $\Omega_0$ with $a\in \bfk$. In this case, the weight is zero.  When $a$ is nonzero, then this relation is the same as $a^{-1}dP-\id$. Since $a^{-1}d$ is still a differential operator of weight zero, we are reduced to the previous case of $dP=\id$ which has been treated. When $a=0$, then we have $dP=0$, which is very degenerated comparing to the other cases that we will consider.
Excluding these treated or trivial cases, we are left to consider the operator polynomial
\vspace{-.2cm}
\begin{equation}
	\mlabel{eq:maincase}
	dP-Pd-bP, \quad b\in \bfk,
\end{equation}
where when $b\neq 0$, we further require $\lambda=0$.

Thus our study will focus on \pdrbas of the following three types.

\begin{defn} \mlabel{de:pdrbtype}
Let $\omega=	dP-Pd-bP$ be as in Eq.~\meqref{eq:maincase}.	A type $\omega$ \pdrba is called
	\begin{enumerate}
		\item a {\bf \pdrba of type I} if the operators $d$ and $P$ have weight $\lambda=0$, and $b=0$;
		\item a {\bf \pdrba of type II} if $d$ and $P$ have weight $\lambda\neq 0$, and $b=0$;
		\item a {\bf \pdrba of type III} if $d$ and $P$ have weight $\lambda=0$, and $b\neq 0$.
	\end{enumerate}
\end{defn}

Clearly type I can be merged with type II by allowing arbitrary $\lambda\in \bfk$, and can be merged with type III by allowing arbitrary $\bz\in \bfk$. While merging type I and II is convenient for most of the study; for certain other aspects of the study, such as for Gr\"obner-Shirshov bases in Section~\mref{s:gsbqdrb}, it is necessary to separately consider the three disjoint cases.

\subsection{\Pdrbas from Hurwitz series}
For an algebra $A$, let $A^{\NN}$ denote the $\bfk$-module of functions $f:\NN \rar A$.
We also view $f \in A^{\NN}$ as a sequence $(f_n)=(f_0,f_1,\cdots)$ with $f_n \in A$ given
by $f_n := f(n)$ for all $n \in \NN$. Then the operations on $A^\NN$ are defined componentwise.
	Following~\mcite{GK3}, the {\bf $\lambda$-Hurwitz product} on $A^{\NN}$ is given by
	\begin{equation}
		(fg)_n = \sum_{k=0}^{n}\sum_{j=0}^{n-k} {n\choose k} {n-k\choose j}
		\lambda^{k}f_{n-j}g_{k+j},\quad  f, g\in R^\NN.
		\mlabel{eq:hurprod}
	\end{equation}
Define a linear map
$$\partial_A: A^{\NN}\to A^{\NN},\;\partial_A(f)_n= f_{n+1},\quad f\in A^{\NN}, n\in \NN.$$
\begin{prop}$($\cite[Proposition~2.7]{GK3}$)$
For an algebra $A$, the operated algebra $(A^{\NN},\partial_A)$
is a  differential algebra of weight $\lambda$.
\mlabel{prop:cofree}
\end{prop}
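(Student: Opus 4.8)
The plan is to verify directly that $(A^\NN,\partial_A)$ satisfies the three defining conditions of a differential algebra of weight $\lambda$. The associativity and unitality of the $\lambda$-Hurwitz product on $A^\NN$ are taken as known (this is the algebra structure recorded in Eq.~\meqref{eq:hurprod}, cf.~\mcite{GK3}), and $\bfk$-linearity of $\partial_A$ is immediate from its componentwise definition $\partial_A(f)_n=f_{n+1}$. Thus only two points require genuine checking: the normalization $\partial_A(\bfone)=0$ and the weight-$\lambda$ Leibniz rule of Eq.~\meqref{eq:der1}. For the first, I would note that the identity of $A^\NN$ is the sequence $\bfone=(1_A,0,0,\ldots)$ concentrated in degree $0$, so $\partial_A(\bfone)_n=\bfone_{n+1}=0$ for every $n\in\NN$, giving $\partial_A(\bfone)=0$ at once.

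The substance is the identity
\[
\partial_A(fg)=\partial_A(f)\,g+f\,\partial_A(g)+\lambda\,\partial_A(f)\,\partial_A(g),\qquad f,g\in A^\NN,
\]
which I would establish componentwise, i.e. by showing that $(fg)_{n+1}$ equals the $n$-th component of the right-hand side for each $n$. First I would write out $(fg)_{n+1}$ by substituting $n+1$ into Eq.~\meqref{eq:hurprod}, and then expand each of the three products on the right using Eq.~\meqref{eq:hurprod} together with the shifts $\partial_A(f)_m=f_{m+1}$ and $\partial_A(g)_m=g_{m+1}$. This produces one double sum on the left and three double sums on the right, all of the form $\sum_{k,j}(\text{binomials})\,\lambda^{(\cdot)}f_a g_b$.

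The heart of the argument is to reindex all four sums by the pair $(p,q)$ recording the subscripts of $f$ and $g$. A short calculation shows that in every one of the four sums the power of $\lambda$ attached to the monomial $f_p g_q$ is exactly $\lambda^{p+q-n-1}$, so it suffices to match the scalar binomial coefficient of each fixed $f_pg_q$. Writing $K=p+q-n-1$ and $J=n+1-p$, the coefficient coming from the left-hand side is $\binom{n+1}{K}\binom{n+1-K}{J}$, while the three right-hand contributions (from $\partial_A(f)\,g$, $f\,\partial_A(g)$, and $\lambda\,\partial_A(f)\,\partial_A(g)$ respectively) are $\binom{n}{K}\binom{n-K}{J}$, $\binom{n}{K}\binom{n-K}{J-1}$, and $\binom{n}{K-1}\binom{n+1-K}{J}$. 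Applying Pascal's rule first to $\binom{n-K}{J}+\binom{n-K}{J-1}=\binom{n+1-K}{J}$ and then to $\binom{n}{K}+\binom{n}{K-1}=\binom{n+1}{K}$ collapses the three terms into the left-hand coefficient, completing the verification.

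I expect the only real obstacle to be the bookkeeping: keeping the summation ranges aligned under the change of variables $(k,j)\mapsto(p,q)$ and confirming that the boundary terms (where $K$ or $J$ reaches an extreme value, or where a subscript such as $k+j$ or $n-j$ would fall outside its nominal range) contribute nothing spurious. These are handled cleanly by the standard convention $\binom{m}{r}=0$ for $r<0$ or $r>m$, after which the two Pascal reductions above are purely mechanical.
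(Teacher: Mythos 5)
The paper offers no proof of this proposition---it is quoted directly from \cite[Proposition~2.7]{GK3}---so there is nothing internal to compare against; your direct componentwise verification is exactly the standard argument one finds in that reference. Your bookkeeping checks out: under the reindexing $(p,q)=(n+1-j,\,k+j)$ (suitably shifted in each term) all four sums carry $\lambda^{p+q-n-1}$ on $f_pg_q$, the two applications of Pascal's rule collapse the three right-hand coefficients to $\binom{n+1}{K}\binom{n+1-K}{J}$, and the convention $\binom{m}{r}=0$ for $r<0$ or $r>m$ correctly absorbs the boundary terms (e.g.\ the monomial $f_0g_{n+1}$ is produced only by the $f\,\partial_A(g)$ term, with coefficient $1$ on both sides), so the proof is complete and correct.
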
	
The following result gives a class of \pdrbas. 

\begin{prop}
Let $(A,P)$ be a Rota-Baxter algebra of weight $\lambda$ and  let $(A^{\NN},\partial_A)$ be the  differential algebra of weight $\lambda$. 
\begin{enumerate}
    \item Let $\lambda\in \bfk$ and $b=0$. Define a linear map
    $$\lpt: A^{\NN}\to A^{\NN}, \quad \lpt(f):= (P(f_0),P(f_1),P(f_2),\cdots), \text{ that is, } 
    \lpt(f)_n:=P(f_n), \  n\in \NN.$$
    Then the triple $(A^{\NN},\partial_A,\lpt)$ is a  \pdrba of type I when $\lambda =0$ or type II when $\lambda\neq 0$.
    \mlabel{it:t1}
    \item Let $\lambda=0$ and $b\neq 0$. Define
    \begin{equation}
   \lpt: A^{\NN}\to A^{\NN}, \quad  (\lpt(f))_n:= \sum_{s=0}^n{n\choose s}b^{n-s}P(f_s). \label{eq:cofcommeq1}
    \end{equation}
Then the triple $(A^{\NN},\partial_A,\lpt)$ is a  \pdrba of type III. \mlabel{it:t2}
\end{enumerate}
  \mlabel{prop:maincommex}
\end{prop}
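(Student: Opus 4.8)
By Proposition~\ref{prop:cofree}, $(A^{\NN},\partial_A)$ is already a differential algebra of weight $\lambda$, so in each case the plan is to verify only two things: that $\lpt$ is a Rota-Baxter operator of weight $\lambda$ on the Hurwitz algebra $A^{\NN}$, and that the pair $(\partial_A,\lpt)$ satisfies the defining relation $dP-Pd-bP$ of the corresponding type in Definition~\ref{de:pdrbtype}.

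For part~(i), where $\lpt$ acts componentwise by $\lpt(f)_n=P(f_n)$, I would check both requirements coefficient by coefficient. The relation $\partial_A\lpt=\lpt\partial_A$ is immediate, since both sides send $f$ to the sequence $n\mapsto P(f_{n+1})$. For the Rota-Baxter axiom I would expand $(\lpt(f)\lpt(g))_n$ using the $\lambda$-Hurwitz product \eqref{eq:hurprod}; every summand has the form $P(f_{n-j})P(g_{k+j})$, to which I apply the weight-$\lambda$ Rota-Baxter identity of $(A,P)$. The three resulting families of terms reassemble, again via \eqref{eq:hurprod} and the $\bfk$-linearity of $P$, into exactly $\lpt(\lpt(f)g)_n+\lpt(f\lpt(g))_n+\lambda\,\lpt(fg)_n$; here the factor $\lambda^{k}$ from the product combines with the extra $\lambda$ of the weight-$\lambda$ identity to give precisely the $\lambda^{k+1}$ appearing in $\lambda\,\lpt(fg)_n$. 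Note that no commutativity of $A$ is used, since the Rota-Baxter identity is invoked for each ordered pair separately.

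For part~(ii), where $\lambda=0$ and the Hurwitz product is the binomial convolution $(fg)_n=\sum_j\binom nj f_{n-j}g_j$, the key idea is to factor the twisted operator of \eqref{eq:cofcommeq1} as $\lpt=M_u\circ Q$. Here $Q$ is the componentwise operator $Q(f)_n=P(f_n)$ from part~(i), which is a weight-$0$ Rota-Baxter operator, and $M_u$ denotes Hurwitz multiplication by the sequence $u:=(b^n)_{n\in\NN}$. Two elementary facts drive the argument: the sequence $u$ is central in $A^{\NN}$ because its entries are scalars, and its Hurwitz square $uu$ has entries $(2b)^n$; and $Q$ commutes with $M_u$, i.e.\ $Q(uh)=uQ(h)$, which is again just the $\bfk$-linearity of $P$. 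Granting these, I would deduce the Rota-Baxter identity for $\lpt$ from that of $Q$: one computes $\lpt(f)\lpt(g)=uu\,Q(f)Q(g)$ and, after sliding the central factor $u$ across products, $\lpt(\lpt(f)g)+\lpt(f\lpt(g))=uu\,\bigl(Q(Q(f)g)+Q(fQ(g))\bigr)$, where the parenthesized expression collapses to $Q(f)Q(g)$ by the weight-$0$ Rota-Baxter identity for $Q$. For the type~III relation I would verify $\partial_A\lpt=\lpt\partial_A+b\lpt$ directly on coefficients; after shifting the summation index, this reduces precisely to Pascal's rule $\binom{n+1}{s}=\binom{n}{s-1}+\binom ns$.

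The only genuine difficulty lies in part~(ii): expanding the Rota-Baxter identity naively yields a fourfold sum of binomial coefficients whose rearrangement is unpleasant. The factorization $\lpt=M_u\circ Q$, together with the centrality of $u$ and the commutation $QM_u=M_uQ$, is what sidesteps this and reduces everything to the already-established weight-$0$ case; recognizing this structure, rather than grinding through the sums, is the crux. A secondary bookkeeping point is that $A$ need not be commutative, so throughout one must only slide the central scalar sequence $u$ past other factors and never interchange two factors originating in $A$.
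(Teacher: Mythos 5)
Your proposal is correct, but it takes a genuinely different route from the paper in both parts. For part (i), the paper proves the Rota--Baxter identity for $\lpt$ by induction on the component index $m$, repeatedly applying $\partial_A$, the Leibniz rule of weight $\lambda$, and the commutation $\partial_A\lpt=\lpt\partial_A$ to reduce to the case $m=0$; you instead expand $(\lpt(f)\lpt(g))_n$ directly via the Hurwitz product \eqref{eq:hurprod}, apply the weight-$\lambda$ identity of $(A,P)$ to each summand $P(f_{n-j})P(g_{k+j})$, and push $P$ back through the sums by linearity. Your computation is shorter and makes transparent why the statement is purely a consequence of $\lpt$ acting componentwise; the paper's induction has the advantage of running in lockstep with the argument it reuses for part (ii). For part (ii), the paper again inducts on the index, now using the twisted relation $\partial_A\lpt=\lpt\partial_A+b\lpt$ and the weight-$0$ Leibniz rule; your factorization $\lpt=M_u\circ Q$ with $u=(b^n 1_A)$ central and $Q(uh)=uQ(h)$ is a genuinely different and rather elegant reduction to the already-established weight-$0$ componentwise case, avoiding the fourfold binomial sums entirely. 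Two small remarks: the factorization tacitly uses that $A$ is unital so that $u$ lives in $A^{\NN}$ (harmless here, since the paper's standing convention is unital algebras, but worth stating), and the computation $(uu)_n=(2b)^n$ that you list as a driving fact is actually never needed --- both sides of the identity carry the same factor $uu$, whatever its entries are. Your verification of $\partial_A\lpt=\lpt\partial_A+b\lpt$ via Pascal's rule coincides with the paper's.
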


\begin{proof}
\meqref{it:t1} 
For all $f\in A^{\NN}$ and $n\in\NN$, we compute
$$(\partial_A\lpt)(f)=(P(f_1),P(f_2),\cdots)=(\lpt\partial_A)(f),$$
which establishes the commutation relation:
\begin{equation}
\partial_A\lpt=\lpt\partial_A.
\mlabel{eq:c1}
\end{equation}
To verify that $\lpt$ is a Rota-Baxter operator of weight $\lambda$, we prove by induction on $m\geq 0$ that for all $f,g\in A^\NN$: 
\begin{equation}
(\lpt(f)\lpt(g))_m=(\lpt(\lpt(f)g+f\lpt(g)+\lambda fg))_m.
\mlabel{eq:c2}
\end{equation}
For the base case of $m=0$, the identity holds since $P$ is a Rota-Baxter operator of weight $\lambda$ on $A$.

Assume that the identity holds for $m=k$. Then for $m=k+1$, we compute: 
\begin{align*}
&\left(\lpt(\lpt(f)g+f\lpt(g)+\lambda fg)\right)_{k+1}=\left(\partial_A(\lpt(\lpt(f)g+f\lpt(g)+\lambda fg))\right)_{k}\\
&=\left(\lpt(\partial_A(\lpt(f)g)+\partial_A(f\lpt(g))+\lambda \partial_A(fg))\right)_{k}\quad(\text{by~Eq}.~(\ref{eq:c1}))\\
&=\left(
\lpt((\partial_A\lpt)(f)g+\lpt(f)\partial_A(g)+\lambda (\partial_A\lpt)(f)\partial_A(g))
\right)_{k}\\
&\quad+\left(
\lpt(
\partial_A(f)\lpt(g)+f(\partial_A\lpt)(g)+\lambda \partial_A(f)\partial_A(\lpt(g))
)
\right)_{k}\\
&\quad+\left(
\lambda\lpt(
\partial_A(f)g+f \partial_A(g)+\lambda \partial_A(f)\partial_A(g)
)
\right)_{k}\quad(\text{by~Eq}.~(\ref{eq:der1}))\\
&=\left(
\lpt((\lpt\partial_A)(f)g+
\partial_A(f)\lpt(g)+
\lambda\partial_A(f)g )
\right)_{k}\\
&\quad+\left(
\lpt( \lpt(f)\partial_A(g) +f(\lpt\partial_A)(g)+ \lambda f \partial_A(g)
)
\right)_{k}\\
&\quad+\left(
\lambda\lpt( (\lpt\partial_A)(f)\partial_A(g) + \partial_A(f)(\lpt\partial_A)(g)
 +\lambda \partial_A(f)\partial_A(g)
)
\right)_{k}\quad(\text{by~Eq}.~(\ref{eq:c1}))\\
&=\left(
\lpt( \partial_A(f))\lpt(g)+ \lpt(f)\lpt(\partial_A(g)) +
\lambda\lpt( \partial_A(f))\lpt(\partial_A(g))
\right)_{k}\quad(\text{by the induction hypothesis} )\\
&=\left(
\partial_A(\lpt(f))\lpt(g)+ \lpt(f)\partial_A(\lpt(g)) +
\lambda\partial_A(\lpt(f))\partial_A(\lpt(g))
\right)_{k}\quad(\text{by~Eq}.~(\ref{eq:c1}))\\
&=\left(\partial_A( \lpt(f)
 \lpt(g))\right)_{k}\quad(\text{by~Eq}.~(\ref{eq:der1}))\\
&=\left( \lpt(f) \lpt(g)\right)_{k+1}.
\end{align*}
This completes the induction, proving that $\lpt$ is a Rota-Baxter operator of weight $\lambda$.

\meqref{it:t2} 
For any $f\in A^{\NN}$ and $n\in\NN$, we compute:
\begin{align*}
&\left((\partial_A\lpt)(f)\right)_n=\left( \lpt(f)\right)_{n+1}=\sum\limits_{s=0}^{n+1}{{n+1}\choose s}b^{n+1-s}P(f_s)\\
&= \sum\limits_{s=1}^{n+1} {{n}\choose {s-1}} b^{n+1-s}P(f_s)
+\sum\limits_{t=0}^{n} {{n}\choose {t}} b^{n+1-t}P(f_t)\\
&= \left((\lpt\partial_A)(f)+b\lpt(f)\right)_n.
\end{align*}
This establishes the relation:
\begin{equation}
\partial_A\lpt=\lpt\partial_A+b\lpt.
\mlabel{eq:b1}
\end{equation}
To show that $\lpt$ is a Rota-Baxter operator of weight $0$, we prove by induction on $m$ that for all $f,g\in A^\NN$:
\begin{equation}
\left(\lpt(f)\lpt(g)\right)_m=\left(\lpt(\lpt(f)g+f\lpt(g) )\right)_m.
\mlabel{eq:b2}
\end{equation}
The base case $m=0$ follows from the fact that $P$ is a Rota-Baxter operator of weight $0$.

Assume that the identity holds for $m=k$. Then for $m=k+1$, we have
{\small
\begin{align*}
&\left(\lpt(\lpt(f)g+f\lpt(g) ) \right)_{k+1}=\left(\partial_A(\lpt(\lpt(f)g+f\lpt(g) ))\right)_{k}\\
&= \left(\lpt(\partial_A(\lpt(f)g)+\partial_A(f\lpt(g)) )+b\lpt( \lpt(f)g + f\lpt(g)) \right)_{k}\quad(\text{by~Eq}.~(\ref{eq:b1}))\\
&= \left(
\lpt((\partial_A\lpt)(f)g+\lpt(f)\partial_A(g) )
\right)_{k} +\left(
\lpt(
\partial_A(f)\lpt(g)+f(\partial_A\lpt)(g)
)
\right)_{k} +\left(b\lpt( \lpt(f)g + f\lpt(g)) \right)_{k} \quad(\text{by~Eq}.~(\ref{eq:der10}))\\
&= \left(
\lpt((\lpt\partial_A)(f)g+
\partial_A(f)\lpt(g)  )
\right)_{k} +\left(
\lpt( \lpt(f)\partial_A(g) +f(\lpt\partial_A)(g) )
\right)_{k} +\left( 2b\lpt( \lpt(f)g + f\lpt(g))
\right)_{k}\quad(\text{by~Eq}.~(\ref{eq:b1}))\\
&= \left(
\lpt( \partial_A(f))\lpt(g)+ \lpt(f)\lpt(\partial_A(g)) +
2b\lpt(f)\lpt(g)
\right)_{k}\quad(\text{by the induction hypothesis} ) \\
&= \left(
\partial_A(\lpt(f))\lpt(g)+ \lpt(f)\partial_A(\lpt(g))
\right)_{k}\quad(\text{by~Eq}.~(\ref{eq:b1}))\\
&= \left(\partial_A( \lpt(f) \lpt(g))\right)_{k}\quad(\text{by~Eq}.~(\ref{eq:der10}))\\
&= \left( \lpt(f) \lpt(g)\right)_{k+1}.
\end{align*}
}
This completes the induction, proving that $\lpt$ is a Rota-Baxter operator of weight $0$. Thus $(A^{\NN},\partial_A,\lpt)$ is a \pdrba of type III.
\end{proof}

The close connection  between Hurwitz series and formal power series in~\cite{Ke} leads to the following concrete examples of \pdrbas of type I and III. For Examples of type II \pdrbas, see Section~\mref{ss:endo}.

\begin{exam}
(i) Let $C$ denote the $\RR$-algebra of continuous functions on $\RR$, and let $C[[t]]$ be the algebra of formal power series with coefficients in $C$. Equip $C[[t]]$ with the derivation 
$$\frac{d}{dt} \bigg(\sum\limits_{i=0}^\infty u_nt^n\bigg):=\bigg(\sum\limits_{i=0}^\infty nu_nt^{n-1}\bigg).$$
Then by \mcite{Ke}, there is an isomorphism of differential algebras of weight $0$:
\begin{equation} \mlabel{eq:diffiso}
T:(C^\NN, \partial_C)\rar \bigg(C[[t]],\frac{d}{dt}\bigg),\quad f\mapsto \sum\limits_{n=0}^\infty {f_{n}\over {n!}}t^n.
\end{equation}

The integration
$$\int:C\rar C, \quad \int(u): =\int_0^x u(s){\rm d}s$$
is a classical example of Rota-Baxter operator of weight $0$. For this operator, the Rota-Baxter operator $\widehat{\int}$ in Proposition~\mref{prop:maincommex}.\meqref{it:t1}, through the isomorphism $T$, gives rise to the Rota-Baxter operator
$$\overline{\int}:=T\circ \widehat{\int}\circ T^{-1}:C[[t]]\rar C[[t]], \quad \overline{\int}\bigg(\sum\limits_{n=0}^\infty u_nt^n\bigg):=\sum\limits_{n=0}^\infty \left(\int_0^x u_n(s){\rm d}s\right) t^n.$$
Then by Proposition~\ref{prop:maincommex}(\ref{it:t1}), the triple $\Big(C[[t]],\frac{d}{dt},\overline{\int}\Big)$ is a  \pdrba of type I.

\noindent
(ii)
Retain the same integration $\int$ and differential algebra $\Big(C[[t]],\frac{d}{dt}\Big)$ as above. The Rota-Baxter operator $\widehat{\int}$ on $C^{\NN}$ in Proposition\,\mref{prop:maincommex}.\meqref{it:t2}, through the isomorphism $T$, defines the Rota-Baxter operator
$$\overline{\int}:=T\circ \widehat{\int}\circ T^{-1}:C[[t]]\rar C[[t]], \quad \overline{\int}\bigg(\sum\limits_{n=0}^\infty u_nt^n\bigg):=\sum\limits_{n=0}^\infty \left(\sum\limits_{k=0}^n {n\choose k}b^{n-k}\int_0^x u_k(s){\rm d}s\right) t^n.
$$
By Proposition~\mref{prop:maincommex}(\ref{it:t2}), the triple $\Big(C[[t]],\frac{d}{dt},\overline{\int})\Big)$ is a  \pdrba of type III.
\end{exam}

Here are some more examples.

\begin{exam}
For  $0\neq b\in \bfk$, the linear operator 
$$\bxddx: \bfk[x] \rar \bfk[x], \quad \bxddx(f(x))=
bx\der (f(x)),$$
is a differential operator of weight $0$, and satisfies $ \bxddx \int-\int \bxddx =b\int$ with $\int(f(x))=\int_0^xf(t){\rm d}t$. Then  the triple $\left(\bfk[x], \dbx, \int\right)$ is a \pdrba of  type III.
\end{exam}

\begin{exam}
Let $\bfk[x,y]$ be the  polynomial algebra in two variables $x$ and $y$. By the Leibniz rule, the partial derivative $\px:\bfk[x,y]\rar\bfk[x,y]$ is a differential operator of weight $0$. Let $\int_y:\bfk[x,y]\rar\bfk[x,y]$ denote the integral with respect to $y$:
$$\int_y\bigg(\sum\limits_{m,n }a_{m,n}x^my^n\bigg)=\sum\limits_{m,n }a_{m,n}x^m\int_0^yt^n{\rm d}t=\sum\limits_{m,n }a_{m,n}x^m\frac{y^{n+1}}{n+1}\,\quad \text{for }\, \sum\limits_{m,n }a_{m,n}x^my^n\in \bfk[x,y].$$
By the integration by parts formula, $\int_y$ is a Rota-Baxter operator of weight $0$. Furthermore $\px\int_y=\int_y\px$ since the two operators act on different variables. Therefore, the triple $\Big(\bfk[x,y],\px,\int_y\Big)$ is a  \pdrba of type I.
\end{exam}

\subsection{\Pdrbas and endo (difference) algebras}
\mlabel{ss:endo}
The notion of difference algebras has been widely studied alongside with the differential algebras~\mcite{Cohn,Lev}. Recently, the more general notion of an endo algebras has appeared in the Lie algebra context from categorification of Lie bialgebras and the classical Yang-Baxter equation~\mcite{BGS}.

\begin{defn}
\mlabel{de:endo}
\begin{enumerate}
\item An {\bf endo algebra} is an algebra $R$ equipped with an endomorphism $\sigma$ of $R$. 
\item A {\bf difference algebra} is an endo algebra where the endomorphism is injective\,\mcite{Cohn,Lev}.
\item An {\bf endo Rota-Baxter algebra} of weight $1$ is a Rota-Baxter algebra $(R,P)$ of weight $1$ equipped with an endomorphism $\sigma$ of the Rota-Baxter algebra $(R,P)$, in the sense that $\sigma P=P\sigma$.
\item A {\bf Rota-Baxter operator} of weight $1$ on an endo algebra $(R,\sigma)$ is a Rota-Baxter operator on $R$ such that $P\sigma =\sigma P$.
\end{enumerate}
\end{defn}
Thus an endo Rota-Baxter algebra is the same as an endo algebra equipped with a Rota-Baxter operator.

It is direct to check that $d:R\to R$ is a differential operator of weight $1$ if and only if $\id+d$ is an endomorphism~\mcite{GK3}. Adding a Rota-Baxter operator gives the following direct consequence.

\begin{prop} \mlabel{p:endopdrb}
Let $R$ be an algebra and $P, d:R\to R$ be linear maps.
The following statements are equivalent.
\begin{enumerate}
\item The triple $(R,P,d)$ is a \pdrba of type II with weight $1$;	
\item The pair $(R,P)$ is a Rota-Baxter algebra of weight $1$ and $\id+d$ is an endomorphism of $(R,P)$;
\item The pair $(R,\id+d)$ is an endo algebra and $P$ is a Rota-Baxter operator on $(R,\id+d)$.
\end{enumerate}
\end{prop}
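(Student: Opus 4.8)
The plan is to notice that, once the definitions are unwound, each of the three statements is literally the conjunction of the same three elementary conditions; the equivalences then follow from two routine translations. Writing $\sigma := \id + d$, these conditions are: (A) $P$ is a Rota-Baxter operator of weight $1$ on $R$; (B) $\sigma$ is a unital algebra endomorphism of $R$; and (C) $d$ commutes with $P$, i.e. $dP = Pd$.

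First I would record the two translations that the excerpt has already isolated. The cited result from \mcite{GK3} states that $d$ is a differential operator of weight $1$ if and only if $\sigma = \id + d$ is a unital algebra endomorphism of $R$; this identifies condition (B) with the requirement that $d$ be a differential operator of weight $1$. The second translation is the linear identity obtained by expanding $\sigma = \id + d$, namely $\sigma P = P + dP$ and $P\sigma = P + Pd$; subtracting these and using that $\id$ trivially commutes with $P$ shows that $\sigma P = P\sigma$ holds if and only if $dP = Pd$. Hence condition (C) may be written interchangeably as $dP = Pd$ or as $\sigma P = P\sigma$.

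With these two translations in hand, the three statements unwind directly. Statement (i) asks, by Definition~\mref{de:pdrbtype}, that $d$ be a differential operator of weight $1$, that $P$ be a Rota-Baxter operator of weight $1$, and that the type-$\omega$ relation $dP - Pd = 0$ (the case $b=0$) hold; these are exactly (B), (A), and (C). Statement (ii) asks that $(R,P)$ be a Rota-Baxter algebra of weight $1$ and that $\sigma$ be an endomorphism of the Rota-Baxter algebra $(R,P)$; by Definition~\mref{de:endo}(iii) the latter means $\sigma$ is an algebra endomorphism together with $\sigma P = P\sigma$, so (ii) is (A), (B), and (C). Statement (iii) asks that $(R,\sigma)$ be an endo algebra and that $P$ be a Rota-Baxter operator on it; by Definition~\mref{de:endo}(iv) this means $\sigma$ is an algebra endomorphism and $P$ is a Rota-Baxter operator of weight $1$ with $P\sigma = \sigma P$, so (iii) is again (B), (A), and (C).

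Since (i), (ii), and (iii) are each seen to be precisely the conjunction of (A), (B), and (C), they are mutually equivalent, which completes the proof. I expect no substantive obstacle: the content is entirely the cited differential-operator/endomorphism correspondence, and the only point needing care is the bookkeeping of what ``endomorphism of a Rota-Baxter algebra'' and ``Rota-Baxter operator on an endo algebra'' mean, together with the observation that the commutation relation is unaffected by the shift from $d$ to $\sigma = \id + d$.
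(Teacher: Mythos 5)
Your proposal is correct and matches the paper's approach: the paper likewise treats the proposition as a direct consequence of the cited fact from \mcite{GK3} that $d$ is a differential operator of weight $1$ iff $\id+d$ is an endomorphism, combined with unwinding Definitions~\mref{de:pdrbtype} and~\mref{de:endo} and noting that $\sigma P=P\sigma$ is equivalent to $dP=Pd$. The paper offers no further detail, so your explicit identification of each statement with the conjunction of the three conditions (A), (B), (C) is exactly the intended argument.
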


This equivalence gives another source to obtain \pdrbas of type II.
Here is a specific example.
\begin{exam}Let $\bfk$ be an algebraically closed field of characteristic zero. Let $M^u_2(\bfk)$ denote the $2\times 2$ upper triangular  matrix algebra. Define
	\vspace{-.2cm}
\begin{equation*}
P:M^u_2(\bfk)\to M^u_2(\bfk),\qquad \left(\begin{array}{cc}
a_{11} &a_{12}\\
0& a_{22}
\end{array}\right)\mapsto  \left(\begin{array}{cc}
a_{22}-a_{11} &-a_{12}\\
0& 0
\end{array}\right).
\end{equation*}
Then by \cite[Corollary~3.6]{GG20}, $P$ is a Rota-Baxter operator of weight $1$.
On the other hand, we set
\vspace{-.2cm}
\begin{equation*}
\sigma:M^u_2(\bfk)\to M^u_2(\bfk),\qquad \left(\begin{array}{cc}
a_{11} &a_{12}\\
0& a_{22}
\end{array}\right)\mapsto  \left(\begin{array}{cc}
a_{11} &a_{12}+a_{22}-a_{11}\\
0& a_{22}
\end{array}\right).
\end{equation*}
Then by a direct computation, $\sigma$ is an injective endomorphism.
Furthermore, we have
$$P\sigma (E_{ij})=\sigma P(E_{ij}), \quad 1\leq i\leq j\leq 2,$$
where $E_{ij}$ denotes  the standard basis matrix with $1$ in the $(i,j)$-th entry and with $0$ elsewhere.
Thus $P\sigma=\sigma P$ on $M^u_2(\bfk)$.
Therefore, we have
\vspace{-.1cm}
$$P(\sigma-\id)=(\sigma-\id)P.$$
By
Proposition~\mref{p:endopdrb}, $(R, P, \sigma-\id)$ is
a \pdrba of type II.
\end{exam}
\vspace{-.5cm}
\section{Monomial orders and Gr\"obner-Shirshov bases for operated algebras}
\mlabel{s:nogsb}

In this section, we recall the explicit constructions of free $\Omega$-operated semigroups and free $\Omega$-operated algebras by bracketed words. We then apply these constructions to introduce a monomial order to study Gr\"{o}bner-Shirshov bases for operated algebras.

\subsection{Free operated algebras}
We follow the notions and constructions of free $\Omega$-operated algebras in~\cite{Gop}, a notion that can be traced back to Kurosh~\cite{Ku}.   

\begin{defn}\cite{Gop}
Let $\Omega$ be a set.
\begin{enumerate}
\item
An {\bf $\Omega$-operated semigroup} is a semigroup $S$ equipped with a family of operators $\alpha_\omega: S\to S$ for  $\omega\in\Omega$.
\item
A {\bf morphism  of operated semigroups} from $(S_1,\alpha_\omega)$  to $(S_2,\beta_\omega)$ is a semigroup homomorphism $f:S_1 \to S_2$ such that $f\circ \alpha_\omega= \beta_\omega \circ f$ for $\omega\in\Omega$.
\item
An {\bf $\Omega$-operated $\bfk$-algebra} (resp. {\bf$\Omega$-operated nonunitary $\bfk$-algebra}) is an algebra (resp. nonunitary algebra) $A$ equipped with a family of linear operators $P_\Omega:=\{P_\omega\}_{\omega\in\Omega}$ on $A$.
\item
A {\bf morphism  of operated $\bfk$-algebras} from $(A,P_{A,\Omega})$  to $(B,P_{B,\Omega})$ is an algebra homomorphism $f:A \to B$ such that $f\circ P_{A,\omega}= P_{B,\omega} \circ f$ for  $\omega\in\Omega$.
\end{enumerate}
 \end{defn}

For a set $Y$, let $S(Y)$ denote the free semigroup generated by $Y$, and $M(Y)$ be the free monoid generated by $Y$, both constructed in terms of words from the alphabet set $Y$.

For $\omega\in \Omega$, denote by  $\lfloor Y \rfloor_\omega$  the set of all elements $ \lfloor y \rfloor_\omega, y\in Y$. Set
$$\lfloor Y \rfloor_\Omega:=\sqcup_{\omega\in \Omega} \left \lfloor Y\right \rfloor_\omega.$$
We now construct the free commutative $\Omega$-operated semigroup on a set $Z$ by a direct system
$$\Big\{\tau_{n,n+1}:\frakS_{\Omega,n-1}(Z)\hookrightarrow  \frakS_{\Omega,n}(Z)\Big\}_{n\geq0}$$
 of free semigroups $\frakS_{\Omega,n}(Z)$, where $\tau_{n,n+1}$  is a semigroup homomorphism.
Firstly, we define 
\begin{equation}\mlabel{eq:sz0}
\frakS_{\Omega,0}(Z):=S(Z),\quad
\frakS_{\Omega,1}(Z):= S(Z\sqcup  \lc S(Z)\rc_\Omega).
\end{equation}
 The inclusion into the first component  $ Z\hookrightarrow Z\sqcup \lfloor \frakS_{\Omega,0} \rfloor_\Omega$ induces an injective semigroup homomorphism
$$ \tau_{0,1}: \frakS_{\Omega,0}(Z)  \hookrightarrow \frakS_{\Omega,1}(Z).
$$

For a given $n \geq 1$,   assume by induction that we have defined  $\Omega$-operated semigroups $\frakS_{\Omega,i}(Z)$  with the
 properties that  $\frakS_{\Omega,i}(Z)=S(Z\cup \lc\frakS_{\Omega,i-1}\rc_\Omega)$ and $\tau_{i-1,i}:\frakS_{\Omega,i-1}(Z)\hookrightarrow \frakS_{\Omega,i}(Z)$ for all $1\leq i\leq n$.
Then define
$$\frakS_{\Omega,n+1}(Z):= S(Z\sqcup \lfloor \frakS_{\Omega,n}(Z) \rfloor_\Omega).$$
The homomorphism of semigroups
$\tau_{n-1,n}: \frakS_{\Omega,n-1}(Z) \hookrightarrow \frakS_{\Omega,n}(Z)$ gives an injection 
$$\lc \tau_{n-1,n}\rc_\Omega: \lc \frakS_{\Omega,n-1}(Z)\rc_\Omega \to \lc  \frakS_{\Omega,n}(Z)\rc_\Omega.$$
Then the natural  inclusion
$$
\id_Z\sqcup \lfloor \tau_{n-1,n} \rfloor_\Omega:Z\sqcup \lfloor \frakS_{\Omega,n-1}(Z) \rfloor_\Omega \hookrightarrow Z\sqcup \lfloor \frakS_{\Omega,n}(Z) \rfloor_\Omega
$$
leads to a semigroup  homomorphism
$$	\tau_{n,n+1}: \frakS_{\Omega,n}(Z)= S(Z\sqcup \lfloor  \frakS_{\Omega,n-1}(Z) \rfloor_\Omega) \hookrightarrow  \frakS_{\Omega,n+1}(Z) = S(Z\sqcup \lfloor \frakS_{\Omega,n}(Z) \rfloor_\Omega).$$
This completes the desired direct system. Then by taking the direct limit, we obtain a semigroup
$$  \frakSO(Z):=\bigcup_{n\geq 0}\frakS_{\Omega,n}=\varinjlim \frakSO_{,n}(Z). $$
For a given $\omega\in \Omega$, the sequence of maps 
$$ P_{\omega,n}: \frakSO_{,n}(Z)\longrightarrow  \frakSO_{,n+1}(Z), \quad u\mapsto \left \lfloor u\right \rfloor_{\omega},$$ 
induces by taking direct limit a map $P_{\omega}$  on $\frakSO(Z)$.
Then by~\cite[Corollary~3.6]{Gop}, $\frakSO(Z)$ together with the set $P_\omega:=\{P_\omega\,|\,\omega\in \Omega\}$ forms the free $\Omega$-operated semigroup  on $Z$.

Denote by $\bfk\frakSO(Z)$ the $\bfk$-module with basis $\frakSO(Z)$. Extending the
multiplication on $\frakSO(Z)$ by bilinearity and the set maps $P_\Omega$ by linearity, still denoted by $P_\Omega$, we obtain an $\Omega$-operated algebra $\bfk\frakSO(Z)$.
Then by ~\cite[Corollary~3.7]{Gop},
$\bk\frakSO(Z)$ with $ P_\omega$ becomes the free $\Omega$-operated nonunitary algebra on $Z$.

By a parallel construction, we obtain the free $\Omega$-operated monoid $\frakM_\Omega(Z)$ and the free  $\Omega$-operated unitary algebra $\bfk\frakM_\Omega(Z)$ on a set $Z$. Simply replace the free semigroup $S(Y)$ by the free monoid $M(Y)$ everywhere in the above construction of the free $\Omega$-operated nonunitary  algebra.

\begin{defn} Elements of $\frakSO(Z)$ are called {\bf $\Omega$-bracketed words} or {\bf $\Omega$-operated words}, or simply just
{\bf bracketed words} or {\bf operated words}.  An element $f \in \bfk\frakSO(Z)$ will be
called an {\bf $\Omega$-bracketed polynomial} or {\bf $\Omega$-operated polynomial}, or simply just
{\bf bracketed polynomial} or {\bf operated polynomial},
unless otherwise noted. 
\mlabel{def:bwbp}
\end{defn}
When there is no danger of confusion, we often omit the adjectives ``bracketed'' and ``operated''.

\subsection{Operated polynomial identities}
In this section, we will recall the notions of an operated polynomial identity, and further an operated polynomial identity algebra.

\begin{defn}
\begin{enumerate}
\item
Let $X$ be a set. An element $\phi(x_1,\cdots, x_n)$ in $\bfk\frakSO(X)$ for $x_i\in X$ and $1\leq i\leq n$, is called  an \textbf{operated polynomial identity }(OPI).
\item
Let $\phi(x_1,\cdots, x_n)\in \bfk\frakSO(X)$ be an OPI. We say that an $\Omega$-operated algebra $R$ with a family of linear operators $\{P_\omega\}_{\omega\in\Omega}$ is a {\bf $\phi$-algebra} and that $P_\omega$ is a {\bf $\phi$-operator} for all $\omega\in\Omega$, if ${\phi}(r_1,\dots,r_n)=0$ for all $r_1,\dots,r_n\in R$. An {\bf operated polynomial identity algebra} is any algebra that is also a $\phi$-algebra for some $\phi$.
\item More generally,  for a family of OPIs $\Phi$, we call an  operated algebra $(A,P_\Omega)$ an {\bf $\Phi$-
algebra} if it is an $\phi$-algebra for all $\phi\in\Phi$.
\end{enumerate}
 \mlabel{de:pio}
\end{defn}

\begin{defn}
Given any set $Z$, and a subset $S \subset \bfk\frakSO(Z)$, the {\bf operated ideal $\Id(S)$ of ~$\bfk\frakSO(Z)$ generated by $S$} is the smallest operated ideal containing $S$.
\mlabel{de:repgen}
\end{defn}
Let $\Phi$ be a set of OPIs in $\bfk\frakSO(X)$. Let
$$S_\Phi(Z):=\Big\{\phi(u_1,\cdots,u_n)\,|\, u_1,\cdots,u_n\in\frakSO(Z),\,\phi\in\Phi\Big\}\subset \bfk\frakSO(Z).$$
Denote by $\Id(S_\Phi(Z))$ the operated ideal generated by the set $S_\Phi(Z)$.

\begin{theorem}~\cite{Cohn,QQWZ21} Let $\Phi\subseteq \bfk\frakSO(X)$ be a set of OPIs. Let $Z$ be a set. Then
the quotient  $\bfk\frakSO(Z)/\Id(S_\Phi(Z))$ is the free $\Phi$-algebra on $Z$.
\mlabel{thm:quotient}
\end{theorem}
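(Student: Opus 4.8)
The plan is to verify directly that the quotient $Q:=\bfk\frakSO(Z)/\Id(S_\Phi(Z))$, together with the composite $j:=\pi\circ\iota$ (where $\iota\colon Z\hookrightarrow\bfk\frakSO(Z)$ is the canonical inclusion and $\pi\colon\bfk\frakSO(Z)\to Q$ is the projection), satisfies the universal property of the free $\Phi$-algebra on $Z$. This is the operated analogue of the classical ``free object in a variety $=$ free object modulo relations'' construction, the only new ingredient being that one quotients by an \emph{operated} ideal. Throughout I would use that $\bfk\frakSO(Z)$ is the free $\Omega$-operated algebra on $Z$ (from~\cite{Gop}): any set map from $Z$ into an $\Omega$-operated algebra extends uniquely to a morphism of $\Omega$-operated algebras out of $\bfk\frakSO(Z)$. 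The same freeness will be invoked with the variable set $X$ in place of $Z$.

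For the universal property itself, let $R$ be any $\Phi$-algebra and $f\colon Z\to R$ a set map. By freeness there is a unique morphism of $\Omega$-operated algebras $\tilde f\colon\bfk\frakSO(Z)\to R$ extending $f$. For every $\phi\in\Phi$ and all bracketed words $u_1,\dots,u_n\in\frakSO(Z)$, the fact that $\tilde f$ commutes with the multiplication and with every $P_\omega$ gives $\tilde f(\phi(u_1,\dots,u_n))=\phi(\tilde f(u_1),\dots,\tilde f(u_n))$, which vanishes because $R$ is a $\Phi$-algebra. Hence $\ker\tilde f$ is an operated ideal containing the generating set $S_\Phi(Z)$, so it contains $\Id(S_\Phi(Z))$; consequently $\tilde f$ factors as $\tilde f=\bar f\circ\pi$ for a unique operated morphism $\bar f\colon Q\to R$ with $\bar f\circ j=f$. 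Uniqueness of $\bar f$ is automatic, since $\pi(\iota(Z))=j(Z)$ generates $Q$ as an $\Omega$-operated algebra and any morphism out of $Q$ is determined by its values there.

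The substantive remaining step, and the one I expect to be the main obstacle, is to check that $Q$ is \emph{itself} a $\Phi$-algebra, that is, $\phi(r_1,\dots,r_n)=0$ for all $\phi\in\Phi$ and \emph{all} $r_1,\dots,r_n\in Q$, not merely for images of bracketed words. Fix lifts $w_i\in\bfk\frakSO(Z)$ with $\pi(w_i)=r_i$. Using the freeness of $\bfk\frakSO(X)$, let $\eta\colon\bfk\frakSO(X)\to\bfk\frakSO(Z)$ and $\theta\colon\bfk\frakSO(X)\to Q$ be the operated morphisms with $x_i\mapsto w_i$ and $x_i\mapsto r_i$; then $\pi\circ\eta$ and $\theta$ agree on $X$, hence coincide, giving $\phi(r_1,\dots,r_n)=\theta(\phi)=\pi(\eta(\phi))=\pi(\phi(w_1,\dots,w_n))$. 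It therefore suffices to show $\phi(w_1,\dots,w_n)\in\Id(S_\Phi(Z))$ for arbitrary bracketed \emph{polynomials} $w_i$, and this is precisely where multilinearity is indispensable: each OPI occurring here (the weight-$\lambda$ Leibniz rule, the Rota-Baxter identity, and $dP-Pd-bP$ from Eq.~\meqref{eq:pdrbrel2}) is linear in every argument, so writing $w_i=\sum_j c_{ij}u_{ij}$ as a $\bfk$-combination of bracketed words and expanding yields $\phi(w_1,\dots,w_n)=\sum c_{1j_1}\cdots c_{nj_n}\,\phi(u_{1j_1},\dots,u_{nj_n})$, a $\bfk$-linear combination of elements of $S_\Phi(Z)$. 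Hence $\phi(w_1,\dots,w_n)\in\Id(S_\Phi(Z))$ and $\phi(r_1,\dots,r_n)=0$ in $Q$, completing the argument. I would flag this multilinearity as the point deserving the most care: it is exactly what guarantees that the \emph{word}-indexed generators of $S_\Phi(Z)$ already generate a large enough ideal, so that no separate polarization relations need to be imposed.
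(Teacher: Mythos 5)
The paper gives no proof of this theorem; it is quoted from \cite{Cohn,QQWZ21}. Your argument is the standard one that those references use (free operated algebra modulo the operated ideal of instantiated identities, then verify the universal property), and it is correct, so there is nothing to contrast at the level of strategy. The one point worth emphasizing is exactly the one you flag: with the definition of $S_\Phi(Z)$ given just before the theorem, the generators are indexed by bracketed \emph{words} $u_i\in\frakSO(Z)$, so the verification that the quotient is itself a $\Phi$-algebra genuinely needs each $\phi\in\Phi$ to be multilinear in its arguments; for a non-multilinear OPI such as $\phi(x)=x^2$ the word-indexed ideal is too small and the quotient need not satisfy $\phi(r)=0$ for sums $r$ of classes of words, so the theorem as stated for arbitrary $\Phi$ is only correct under that (tacit) multilinearity convention or with the alternative, polynomial-indexed definition $S_\Phi(Z)=\{\phi(w_1,\dots,w_n)\mid w_i\in\bfk\frakSO(Z)\}$ that the paper itself switches to at the start of Section~\mref{s:gsbqdrb} (with which your Step~3 becomes immediate). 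Since every OPI actually used in this paper --- the weight-$\lambda$ Leibniz and Rota--Baxter identities and $DP-PD-bP$ --- is multilinear, the two definitions generate the same operated ideal and your proof covers all cases the paper needs. Two cosmetic remarks: your symbols $\eta$ and $\theta$ in Step~3 collide with the paper's later use of those letters in Eqs.~\meqref{eq:eta} and \meqref{eq:theta}, and when you invoke freeness of $\bfk\frakSO(X)$ you should fix the images of the variables of $X$ not occurring in $\phi$ consistently for the two morphisms so that they agree on all of $X$; neither affects correctness.
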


\begin{exam} Let $X$ be a set and let $\Omega=\left\{D,P\right\}$ be a set consisting of two distinct elements. Let $\lambda\in \bfk$. Let $\Phi_{\rm DRB}$ be the subset of $\bfk\frakSO(X)$ consisting of the following three OPIs:
\begin{enumerate}
   \item $\phi_1(x,y):=\lc xy\rc_D-\lc x\rc_D y - x\lc y\rc_D-\lambda \lc x\rc_D\lc y\rc_D$,
    \item  $\phi_2(x):= \lc\lc x\rc_P\rc_D-x$,
\item $\phi_3(x,y):=\lc x\rc_P \lc y\rc_P-\lc x\lc y\rc_P\rc_P-\lc \lc x\rc_P y\rc_P-\lc xy\rc_P$.
\end{enumerate}
If an operated algebra $\left(A,\{P,D\}\right)$ is an $\Phi_{\rm DRB}$-algebra, then it is just a differential Rota-Baxter algebra of weight $\lambda$\,\mcite{GK3}.
\end{exam}
\begin{defn}
Let $u\in \frakSO(Z)$.
\begin{enumerate}
\item We
 write 
\begin{equation}\mlabel{eq:decom0}
u:=v_1\cdots v_k,
\end{equation}
 where $v_i\in Z\cup \lc \frakSO(Z)\rc_\Omega$ for
$1\leq i\leq k$. We call $k$ the {\bf breadth} of $u$ and denote it
by $|u|$.
\item
By combining adjacent
factors of $u=v_1 \dots v_k$ that belong to $Z$ into a monomial
belonging to $M(Z)$ and by inserting $1 \in M(Z)$ between two adjacent factors
of the form $\lc v \rc_{\omega_i}$, where $v \in \frakSO(Z)$ and $\omega_i\in\Omega$, we may
write $u$ uniquely in the canonical form
\begin{equation}
u=u_0\lc u_1^\ast\rc_{\omega_1} u_1 \cdots \lc u_r^\ast \rc_{\omega_r} u_r, {\rm\ where\ } u_0,\cdots, u_r\in
M(Z),  {\rm\ and\ }u_1^\ast,\cdots, u_r^\ast \in \frakSO(Z). \mlabel{eq:decom1}
\end{equation}
If we write
\begin{equation}
u=v_0\lc v_1^\ast\rc_{P} \cdots \lc v_k^\ast \rc_{P} v_k, \quad\text{\ where\ } v_0,\cdots, v_k\in
M(Z)\cup\lc\frakS_{\Omega}(Z)\rc_{\Omega-\{P\}},  {\rm\ and\ }v_1^\ast,\cdots, v_k^\ast \in \frakSO(Z)\mlabel{eq:decomp}
\end{equation}
we define the {\bf $P$-breadth} of $u$ to be $k$ and denote it by
$|u|_P$.
Further define the {\bf $P$-degree} $\deg_P(u)$ of a monomial
$u$ in $\bfk\frakSO(Z)$ to be the total number of occurrences of
the operator $\lc\ \rc_P$ in the monomial $u$, counted with multiplicity.
\end{enumerate}
\mlabel{def:breadths}
\end{defn}

\subsection{A monomial order on $\frakSO(Z)$}
We now construct a monomial order on $\frakSO(Z)$ with the operator set $\Omega=\{P,D\}$.

\noindent
{\bf Notations.} To simplify notations, in the following sections, we will
use the infix notation $\lc u\rc_P$ (resp. $\lc u\rc_D$) interchangeably with $P(u)$ (resp. $D(u)$) for any $u\in \frakSO(Z)$.

We recall the following notions on orders. 
See \mcite{BCQ,QQWZ21,ZGGS} for further details. 

Let $Z$ be a set with a well order $\leq_Z$. For $u=u_1\cdots u_r\in S(Z)$ with $u_1,\cdots,u_r\in Z$, define $\deg_Z(u)=r$. Also define $\deg_Z(1):=0$.
 Define
$$u\leq_{\dz} v\Leftrightarrow \deg_{Z}(u)\leq \deg_{Z}(v).$$
Then we define the {\bf degree lexicographical order} $\leq_{\dl}$  on $S(Z)$ as follows. For  any $u=u_1\cdots u_r,v=v_1\cdots v_s \in S(Z)$, where $u_1,\cdots,u_r,v_1,\cdots, v_s\in Z$,
$$ u\leq_{\dl}v\Longleftrightarrow \left\{ \begin{array}{l}
u<_{\dz} v,\,\text{or}~ \\[5pt]
u=_{\dz} v ~ \text{and} ~ u_1\cdots u_r\leq_{\lex} v_1\cdots v_r,\end{array}\right.$$
where $\leq_{\lex}$ is the lexicographical order on $S(Z)$.
Then
$\leq_{\dl}$ is a well order on	$S(Z)$\,\cite{BN}.

\begin{defn}
	Let $Z$ be a set.  A {\bf monomial order} on  $\frakSO(Z)$  is a well-order $\leq$ on $\frakSO(Z)$  such that
	$$u< v \Rightarrow q|_u<q|_v,\quad u,v\in\frakSO(Z)\text{ and } q\in \frakSO^{\star}(Z).$$
\end{defn}
The following are further notions on monomial orders. 
\begin{defn} Let $Z$ be a set and let $\leq$ be a monomial order on $\frakSO(Z)$.
\begin{enumerate}
\item
For every bracketed polynomial $f\in \bk\frakSO(Z) $, denote by
$\bar{f}$  the leading bracketed monomial of $f$. If the coefficient
of $\bar{f}$ is $1$, then we call $f$ {\bf monic}.
\item
Set  $Z^\star:=Z\cup  \{\star \}$, where $\star\notin Z$. Define a {\bf $\star$-bracketed word} to be a bracketed word in $\frakSO(Z^\star)$  with $\star$ occurring only once. Denote by $\frakSO^{\star}(Z)$  the subset of $\frakSO(Z^\star)$ consisting of $\star$-bracketed words.   Let $q\in\frakSO^\star(Z)$ and let $u\in\frakSO(Z)$. Define
  $$ q|_{u}:=q|_{\star\mapsto u }$$
to be the bracketed word in $\frakSO(Z)$ obtained by replacing the symbol $\star$ in $q$ by $u$.
\item More generally, for $q\in\frakSO^{\star}(Z)$ and $s=\sum_ic_iu_i\in \bfk \frakSO(Z)$  with $c_i\in\bk$ and $u_i\in\frakSO(Z)$, we define
	$$q|_s:=\sum_ic_iq|_{u_i}.$$
\item
Let $Z^{\star_1,\star_2}:=Z\cup\{\star_1,\star_2\}$.  Similarly, we define a {\bf $(\star_1, \star_2)$-bracketed word} to be a bracketed word in  $\frakSO(Z^{\star_1,\star_2})$ with exactly one occurrence of $\star_1$ and exactly one occurrence of $\star_2$, each counted with multiplicity. The set of $(\star_1,\star_2)$-bracketed words is denoted by $\frakSO^{\star_1,\star_2}(Z)$.
\item
Let $q\in \frakSO^{\star_1,\star_2}(Z)$ and $u_1,u_2\in\frakSO(Z)$. Then we define
$$
q|_{u_1,\, u_2}:= u|_{\star_1\mapsto u_1,\,\star_2\mapsto u_2},
$$
to be the bracketed word in $\frakSO(Z)$ obtained by replacing the symbol $\star_1$ in $q$ by $u_1$, and
replacing the symbol $\star_2$ in $q$ by $u_2$.
\end{enumerate}
\end{defn}

With the above notations,  the operated ideal $\Id(S)$ generated by a subset $S \subseteq \bfk\frakSO{Z}$ can be given by
\begin{equation}\hspace{10pt}\Id(S) = \left\{\, \sum_{i=1}^k c_i q_i|_{s_i} \medmid k\geq 1
{\rm\ and\ } c_i\in \bfk,
q_i\in \frakSO^{\star}(Z), s_i\in S {\rm\ for\ } 1\leq i\leq k\,\right\}.
\mlabel{eq:repgen}
\end{equation}

There are various monomial orders on the operated semigroup $\frakSO(Z)$. See \mcite{LQQZ23,QQWZ21,QQZ21,ZGGS} for more details. For our purpose, we will construct a monomial order on $\frakS_\Omega(Z)$, similar to $\leq_{PD}$ given in \mcite{LQQZ23}.
\begin{defn}
Let $u$ and $v$ be elements in $\frakSO(Z)$.
\begin{enumerate}
\item Define $$u\leq_\p v\Longleftrightarrow \deg_P(u)\leq \deg_P(v),$$ where the {\bf $P$-degree} $\deg_P(u)$ of $u$ is the number of occurrences of  $P=\lc\ \rc_P$ in $u$.
\item Similarly define $$u\leq_{\rm d} v\Longleftrightarrow \deg_D(u)\leq \deg_D(v),$$ where the {\bf $D$-degree} $\deg_D(u)$ of $u$ is the number of occurrences of  $D=\lc \ \rc_D$ in $u$.
\item For an appearance of the bracket $D=\lc\ \rc_D$ in $u$, if the bracket $D$ contains $k$ elements of $Z$,  we say that  $D$ has ED-degree $k-1$.  The {\bf ED-degree} of $u$, denoted by $\deg_{\ed}(u)$,  is defined to be the sum of the ED-degrees for all the brackets $D$ in $u$.  Then define
$$u\leq_{\ed} v \Longleftrightarrow \deg_{\ed}(u)\leq \deg_{\ed}(v).$$
For instance,  let $x,y\in Z$. Then 
$$\deg_{\ed}(D(xy))=1,\  \deg_{\ed}(D(x)D(y))=\deg_{\ed}(xD(y))=\deg_{\ed}(D(x)y)=0,$$ and so $ D(x)D(y)<_{\ed} D(xy)$, $D(x)y<_{\ed}D(xy) $ and $xD(y)<_{\ed} D(xy)$.
\item~\cite[Definition~1.10]{LQQZ23} Let $u\in\frakSO(Z)$, and let $u_1,\cdots,u_n\in Z$ be all the elements occurring in $u$. For a given bracket $P$ in $u$, if there are $ k$ elements $u_{i_1},\cdots,u_{i_k}$ contained in $P$  for $ i_1,\cdots,i_k\in\{1,\cdots,n\}$, the {\bf GP-degree} of $P$ is
 defined to be $n-k$. Denote by $\deg_{\rm gp}(u)$ the sum of the GP-degree of all brackets $P$ in $u$.
Then we define
$$u\leq_{\rm gp} v\Longleftrightarrow \deg_{\rm gp}(u)\leq \deg_{\rm gp}(v).$$
For example,  for $x,y\in Z$, we have $$\deg_{\rm gp}(P(x)P(y))=2,\, \deg_{\rm gp}(P(xP(y)))=\deg_{\rm gp}(P(P(x)y))=1\,\text{and}\, \deg_{\rm gp}(P(xy))=0.$$ Thus $P(xy)<_{\rm gp} P(P(x)y)=_{\rm gp}P(xP(y))<_{\rm gp} P(x)P(y)$.
\item
According to the filtration $\frakSO(Z)=\bigcup_{n\geq 0}\frakS_{\Omega,n}$,  we now define a preorder $\leq_{\operatorname{Dlex}}$ on $\frakSO(Z)$ as the direct limit of a sequence of well orders $\leq_{\operatorname{Dlex}_n}$ on $\frakS_{\Omega,n}, n\geq 0,$
which are constructed by the following recursion.
\begin{enumerate}
\item When $n=0$, then we have $\frakS_{\Omega,n}=S(Z)$. Let $u,v\in \frakS_{\Omega,0}(Z)$. Define
	$$u\leq_{\operatorname{Dlex}_0} v \Longleftrightarrow u \leq_{\mathrm{dlex}}v.$$
\item For a given $n\geq 0$, assume that a well order $\leq_{\operatorname{Dlex}_i}$  on $\frakS_{\Omega,i}(Z)$
has been constructed for all $0\leq i\leq n$.
The  well order $\leq_{\operatorname{Dlex}_n}$ on $\frakS_{\Omega,n}(Z)$ induces a well order   on the set $P(\frakS_{\Omega,n}(Z))$  (resp. $D(\frakS_{\Omega,n}(Z))$), by defining
$P(u)\leq P(v)$  (resp.  $D(u)\leq D(v)$) when $u\leq_{\operatorname{Dlex}_n} v$ for $u,v\in\frakS_{\Omega,n}(Z)$.
Define
\begin{equation}u<v<w,\quad \forall u\in Z, v\in P(\frakS_{\Omega,n}(Z)), w\in D(\frakS_{\Omega,n}(Z)).
\mlabel{eq:uvw}
\end{equation}
For example, $P(x)<_{\rm{Dlex}_1} D(x)$ and $P(D(x))<_{\rm Dlex_2} D(P(x))$.

Then by \cite[P214, Chapter VII, \S1, Example~4]{KM76}, we obtain a  well order  on  $Z\sqcup P(\frakS_{\Omega,n}(Z)) \sqcup D(\frakS_{\Omega,n}(Z))$.
Let $\leq_{\operatorname{Dlex}_{n+1}}$ be  the  degree lexicographic order on   $\frakS_{\Omega,n+1}(Z)=S(Z\sqcup P(\frakS_{\Omega,n}(Z)) \sqcup D(\frakS_{\Omega,n}(Z)))$  induced by the well order given in Eq.~\meqref{eq:uvw}.
\end{enumerate}
Taking the direct limit, we obtain a preorder on $\frakS(Z)$
\begin{equation}\mlabel{eq:dlex}
\leq_{\operatorname{Dlex}}:=\varinjlim \leq_{\operatorname{Dlex}_n}.
\end{equation}
By a direct computation,  $\leq_{\operatorname{Dlex}}$ is a linear order. But it is not a well order. Here is a counterexample: Consider the sequence $P^n(xP(y)), n\geq 0$. Then we have $P^n(xP(y))>_{\rm Dlex} P^{n+1}(xP(y))$, leading to an infinite decreasing sequence. 
\mlabel{it:dlex}
\item
We finally define an order $\leq_{\zdp}$ on $\frakSO(Z)$ by combining the above orders
\begin{equation}u \leq_{\zdp} v  \Longleftrightarrow \left\{
\begin{array}{lcl}
u<_{\dz} v,\,\text{or}\\
u=_{\dz} v, \text{and}\;u <_{\ed} v, \text{or}\\
u=_{\dz} v, u =_{\ed} v, \text{and}\; u <_{\rm d} v,\text{or} \\
u=_{\dz} v, u =_{\ed} v, u =_{\rm d} v, \text{and}\;  u <_{\rm gp}v, \text{or} \\
u=_{\dz} v, u =_{\ed} v, u =_{\rm d} v,  u =_{\rm gp}v,\text{and}\;u <_{\p}v, \text{or}\\
u=_{\dz} v, u =_{\ed} v, u =_{\rm d} v,  u =_{\rm gp}v, u =_{\p}v, \text{and}\;  u \leq_{\dlex}v.\\
\end{array}
 \right.
\mlabel{eq:dp}
\end{equation}
\end{enumerate}
\mlabel{def:preorder}
\end{defn}

To show that $\leq_{\zdp} $ is a well order,
we  first recall the following notions and results of preorders.
\begin{defn}\cite[Definition~5.3]{ZGGS}
\begin{enumerate}
\item Let $k\geq 1$ and let $\leq_{\alpha_i}, 1\leq i\leq k,$ be preorders on a set $Y$. For all $u,v\in Y$,  we define recursively
\begin{equation}
u\leq_{\alpha_1,\cdots,\alpha_k} v \Longleftrightarrow
\left\{\begin{array}{ll} u<_{\alpha_1} v, \text{or } \\[5pt]
u=_{\alpha_1} v \text{ and } u\leq_{\beta} v, \end{array} \right. \mlabel{eq:compord}
\end{equation}
where $\leq_{\beta}:=\leq_{\alpha_2,\cdots,\alpha_k}$ is well-defined by the induction hypothesis.
\item
Let $k\geq 1$ and let $(Y_i,\leq_{Y_i}), 1\leq i\leq k$, be partially ordered sets. Recall that the {\bf lexicographical order} $\leq_{\clex}$ on the cartesian  product $Y_1  \times Y_2\times\cdots \times Y_k$ is defined recursively by
\begin{eqnarray}(x_1,x_2,\cdots,x_k) &\leq_{\clex}&(y_1,y_2, \cdots,y_k) \nonumber\\&\Longleftrightarrow&
\left\{\begin{array}{ll} x_1<_{Y_1}y_1,\text{or}~ \\[5pt]
x_1= y_1 ~\text{and}~ (x_2,\cdots,x_k)\leq_{\clex}(y_2, \cdots,y_k),
\end{array}\right.\nonumber
\end{eqnarray}
where $(x_2,\cdots,x_k)\leq_{\clex} (y_2,\cdots,y_k)$ is well-defined by the induction hypothesis.
\end{enumerate}
\end{defn}
		
\begin{lemma}
\begin{enumerate}
\item\cite[Lemma~1.7]{QQZ21}
Let $k\geq 2$. Let $\leq_{\alpha_1},\cdots, \leq_{\alpha_{k-1}}$ be pre-linear orders on $Z$, and let $\leq_{\alpha_k}$ be a linear order on $Z$. Then the relation $\leq_{\alpha_1,\cdots,\alpha_k}$ is a linear order on $Z$. \mlabel{it:well1}
\item
{\bf{\cite[P89,~Theorem~1.13]{Ha}}} Let $\leq_{Y_i}$ be a well order on $Y_i$ with $1\leq i\leq k$ and  $k\geq 1$. Then the lexicographical order ~$\leq_{\clex}$ is a well order on the Cartesian product $Y_1 \times Y_2\times\cdots \times Y_k$.\mlabel{it:well2}
\end{enumerate}
\mlabel{le:wellord}
\end{lemma}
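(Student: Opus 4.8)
Both parts are standard facts about composite and lexicographic orders, so the plan is to prove each by a short induction on $k$, invoking the cited references as the ultimate justification but recording the key reductions.

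For part~\meqref{it:well1}, I would induct on $k$, with $k=2$ as the crucial base case. Write $\leq\,:=\,\leq_{\alpha_1,\alpha_2}$, where $\leq_{\alpha_1}$ is a total preorder and $\leq_{\alpha_2}$ is a linear order. Using the recursion in Eq.~\meqref{eq:compord}, I would check the four defining properties of a linear order directly: reflexivity is immediate from reflexivity of both relations; totality follows by splitting on whether $u<_{\alpha_1}v$, $v<_{\alpha_1}u$, or $u=_{\alpha_1}v$, and in the last case appealing to totality of $\leq_{\alpha_2}$; transitivity is a routine case analysis on the $\alpha_1$-comparisons of the three elements. For the inductive step, set $\beta:=\alpha_2,\dots,\alpha_k$; by the induction hypothesis $\leq_\beta$ is a linear order, so $\leq_{\alpha_1,\dots,\alpha_k}\,=\,\leq_{\alpha_1,\beta}$ is the composite of the total preorder $\leq_{\alpha_1}$ with the linear order $\leq_\beta$, and the base case applies.

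The main obstacle is antisymmetry in the base case, precisely because $\leq_{\alpha_1}$ is only a preorder and need not be antisymmetric; ties left by the preorders must be resolved by the genuinely antisymmetric relation $\leq_{\alpha_k}$. Concretely, if $u\leq v$ and $v\leq u$, then $u<_{\alpha_1}v$ is impossible (it would preclude $v\leq_{\alpha_1}u$ and hence $v\leq u$), and symmetrically $v<_{\alpha_1}u$ is impossible, so $u=_{\alpha_1}v$; the recursion then forces $u\leq_\beta v$ and $v\leq_\beta u$, and antisymmetry of the linear order $\leq_\beta$ yields $u=v$. This is exactly why the hypothesis insists that $\leq_{\alpha_k}$ be a genuine linear order rather than merely a preorder.

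For part~\meqref{it:well2}, I would again induct on $k$, the case $k=1$ being the hypothesis itself. Linearity of $\leq_{\clex}$ is a routine check (or follows from part~\meqref{it:well1}). For the well-order property, given a nonempty subset $S\subseteq Y_1\times\cdots\times Y_k$, I would first use the well order on $Y_1$ to select the least first coordinate $y_1^\ast$ occurring among elements of $S$, then restrict to the nonempty set of those elements of $S$ whose first coordinate equals $y_1^\ast$ and apply the induction hypothesis (the lexicographic order on $Y_2\times\cdots\times Y_k$ is a well order) to extract a least tail. The resulting tuple is the $\leq_{\clex}$-least element of $S$. Since both statements are classical and cited to \mcite{QQZ21} and \mcite{Ha}, I would either invoke those references directly or record the two short inductions above.
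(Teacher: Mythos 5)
Your proposal is correct. Note that the paper itself gives no proof of this lemma: both parts are quoted verbatim from the cited references \cite{QQZ21} and \cite{Ha}, so there is no in-paper argument to compare against. Your two inductions are the standard proofs, and you correctly isolate the only delicate point in part~(\ref{it:well1}), namely that antisymmetry cannot come from the pre-linear orders $\leq_{\alpha_1},\dots,\leq_{\alpha_{k-1}}$ (which may identify distinct elements) and must be supplied by the final linear order $\leq_{\alpha_k}$ after all earlier comparisons end in ties; your verification that $u\leq v$ and $v\leq u$ force $u=_{\alpha_1}v$ and then defer to $\leq_\beta$ is exactly right. The argument for part~(\ref{it:well2}) --- extracting the least first coordinate occurring in a nonempty subset and then applying the inductive hypothesis to the fiber over it --- is the classical proof and is complete once one observes, as you implicitly do, that the selected tuple indeed lies in the subset. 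In short, the proposal supplies a correct, self-contained justification for a statement the paper handles purely by citation.
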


\begin{prop}
The order $\leq_{\zdp} $ is a well order on $\frakSO(Z)$.
\mlabel{prop:wellord}
\end{prop}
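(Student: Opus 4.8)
The plan is to show that $\leq_{\zdp}$ is a \emph{well order} by splitting the claim into two parts: first that it is a linear order, and then that it admits no infinite strictly descending chain. The linearity is the easier half. The order $\leq_{\zdp}$ is defined in Eq.~\meqref{eq:dp} as the lexicographic combination of the six preorders $\leq_{\dz}, \leq_{\ed}, \leq_{\rm d}, \leq_{\rm gp}, \leq_{\p}, \leq_{\dlex}$ in the sense of Eq.~\meqref{eq:compord}. The first five are pre-linear orders (each is a pullback of the usual order on $\NN$ along a degree-type function $\frakSO(Z)\to\NN$, hence total as a preorder), and the last one, $\leq_{\dlex}$, is already observed in Definition~\mref{def:preorder}\meqref{it:dlex} to be a linear order. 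So by Lemma~\mref{le:wellord}\meqref{it:well1} the composite $\leq_{\zdp}$ is a linear order on $\frakSO(Z)$; this step requires only checking that $\leq_{\dz},\ldots,\leq_{\p}$ are genuinely pre-linear and citing the lemma.

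The real content is the well-foundedness, and here the subtlety is precisely the counterexample flagged in the text: $\leq_{\dlex}$ \emph{by itself} is not a well order, since $P^n(xP(y))$ is strictly $\leq_{\dlex}$-decreasing. The point of prepending the five numerical preorders is to rule out exactly such descending chains. First I would observe that $\leq_{\dz}$ is itself a well order on $\frakSO(Z)$ graded by the total number of generators from $Z$ (the quantity $\deg_Z$), since each graded piece is controlled. The strategy is then: in any $\leq_{\zdp}$-descending sequence $u^{(1)} >_{\zdp} u^{(2)} >_{\zdp} \cdots$, the values $\deg_Z(u^{(i)})$ are non-increasing in $\NN$, hence eventually constant; restricting to that tail, $\deg_{\ed}$ is non-increasing, hence eventually constant; and so on through $\deg_{\rm d}$, $\deg_{\rm gp}$, $\deg_P$. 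After finitely many such stabilizations we reach a tail on which all five numerical statistics are constant, so that on this tail the sequence must be strictly $\leq_{\dlex}$-decreasing.

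The crux is therefore to show that \emph{within a fixed value of all five statistics $(\deg_Z,\deg_{\ed},\deg_{\rm d},\deg_{\rm gp},\deg_P)$, the order $\leq_{\dlex}$ is well-founded.} This is where I would do the genuine work, and I expect it to be the main obstacle. The key observation is that these statistics bound the \emph{size} of a bracketed word: fixing $\deg_Z=m$ bounds the number of $Z$-letters, while the pathological chain $P^n(xP(y))$ escapes to infinity precisely by increasing $\deg_P$ (each extra $P$ adds to the $P$-degree). Once $\deg_P$ and $\deg_{\rm d}$ are fixed, the total number of bracket symbols $\lc\ \rc_P$ and $\lc\ \rc_D$ is fixed; together with $\deg_Z$ fixing the number of generators, the set of bracketed words with a prescribed tuple of statistics is \emph{finite}. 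A finite set is trivially well-ordered by any linear order, so $\leq_{\dlex}$ restricted to it has no infinite descending chain, contradicting the existence of the tail. I would make the finiteness claim precise by noting that a bracketed word in $\frakSO(Z)$ is determined by its underlying parenthesized string, and bounding both the number of letters from $Z$ (via $\deg_Z$) and the number of bracket occurrences (via $\deg_P$ and $\deg_{\rm d}$) bounds the total length of the string, leaving only finitely many words over a bounded portion of the alphabet.

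Finally I would assemble these pieces: given a putative infinite $\leq_{\zdp}$-descending chain, pass to successive tails to stabilize each numerical statistic (valid since each takes values in the well-ordered $\NN$), then observe the resulting constant-statistics tail lives in a finite set yet is strictly $\leq_{\dlex}$-descending, a contradiction. An alternative packaging, which I might use instead to streamline the write-up, is to invoke Lemma~\mref{le:wellord}\meqref{it:well2}: map $u\mapsto (\deg_Z(u),\deg_{\ed}(u),\deg_{\rm d}(u),\deg_{\rm gp}(u),\deg_P(u))\in\NN^5$ and show this map has finite fibers, so that $\leq_{\zdp}$ refines the well order obtained by lexicographic ordering on $\NN^5$ (well by Lemma~\mref{le:wellord}\meqref{it:well2}) followed by an arbitrary well order on each finite fiber; since $\leq_{\zdp}$ agrees with such a well order, it is itself well. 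Either route reduces the whole proposition to the single combinatorial lemma that the fibers of the statistics map are finite, which is the heart of the matter.
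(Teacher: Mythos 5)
Your overall architecture matches the paper's: linearity follows from Lemma~\mref{le:wellord}~(\mref{it:well1}) applied to the five $\NN$-valued pre-linear orders followed by the linear order $\leq_{\dlex}$, and well-foundedness is attacked by stabilizing the numerical statistics along a putative descending chain and then disposing of the residual $\leq_{\dlex}$-descent. (The paper in fact only stabilizes $\deg_{D}$ and $\deg_{P}$, which already traps the tail of the chain inside a single $\frakS_{\Omega,k}(Z)$ with $k=k_1+k_2$; stabilizing all five statistics as you do is harmless but unnecessary.)

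The gap is in your final step. You claim that the fiber of the statistics map $u\mapsto(\deg_Z(u),\deg_{\ed}(u),\deg_{D}(u),\deg_{\rm gp}(u),\deg_P(u))$ is finite because fixing $\deg_Z$, $\deg_P$ and $\deg_D$ bounds the length of $u$ as a string. That bounds the length, but not the alphabet: $Z$ is an arbitrary well-ordered set, possibly infinite, and already the fiber over $(1,0,0,0,0)$ is all of $Z$. So ``a finite set is trivially well-ordered'' is not available, and neither is your alternative packaging via Lemma~\mref{le:wellord}~(\mref{it:well2}), which rests on the same finite-fiber claim. The statement you actually need --- that $\leq_{\dlex}$ restricted to the tail has no infinite strictly descending chain --- is true, but for a different reason: once $\deg_D$ and $\deg_P$ are constant along the tail, all its terms lie in one $\frakS_{\Omega,k}(Z)$, and the restriction of $\leq_{\dlex}$ there is $\leq_{\dlex_k}$, which is a well order by its recursive construction (degree-lexicographic orders on free semigroups over well-ordered alphabets are well orders, the fact from \mcite{BN} already invoked in the paper; this uses the well order on $Z$, not its finiteness). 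Replacing your finiteness lemma by this observation repairs the proof and recovers the paper's argument.
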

\begin{proof}
The orders $\leq_{\dz}, \leq_{\ed}, \leq_{\rm d},  \leq_{\rm gp}$ and $\leq_{\p}$ are pre-linear orders since they take values in $\NN$. By Definition~\mref{def:preorder}~(\mref{it:dlex}), the order $\leq_{\rm Dlex}$ is a linear order.  Then by Lemma~\mref{le:wellord}~(\mref{it:well1}),
$\leq_{\zdp}$ is a linear order. We next show that $\leq_{\zdp}$ satisfies the descending chain condition.
Suppose that $u_1\geq_{\zdp} u_2\geq_{\zdp}\cdots$. Since $\leq_{\rm d}$ and $\leq_{\p}$ satisfy the descending chain condition, there are natural numbers $N_1, N_2\geq 0$ and $k_1,k_2\geq 0$ such that
$$
\deg_{D}(u_{N_1})=\deg_{D}(u_{N_1+1})=\cdots=k_1
$$
and
$$\deg_{P}(u_{N_2})=\deg_{D}(u_{N_2+1})=\cdots=k_2. $$
Take $N:=\max\{N_1,N_2\}$ and let $k:=k_1+k_2$. Then $u_i\in \frakS_{\Omega,k}(Z)$ for all $i\geq N$. Since the restriction of $\leq_{\zdp}$ to $\frakS_{\Omega,k}(Z)$ is just the well order $\leq_{\dlex_k}$, the descendant chain
$u_{N}\geq_{\zdp} u_{N+1}\geq _{\zdp}\cdots$ means the descendant chain $u_{N}\geq_{\dlex_k} u_{N+1}\geq _{\dlex_k}\cdots$ and hence stabilizes after finite steps. 
\end{proof}

\begin{defn}
 A well order $\leq_{\operatorname{\alpha}}$ on $\frakSO(Z)$ is called {\bf bracket compatible} (resp. {\bf left multiplication compatible}, resp. {\bf right multiplication  compatible}) if
$$u\leq_{\operatorname{\alpha}}v \implies \lfloor u\rfloor_\omega \leq_{\operatorname{\alpha}}\lfloor v \rfloor_\omega\, (\text{resp.}\, wu \leq_{\operatorname{\alpha}}wv, \text{resp.}\, uw\leq_{\operatorname{\alpha}}vw), \, \forall u, v, w \in \frakSO(Z), \omega\in\Omega.$$
\end{defn}

 \begin{lemma}\mlabel{lem:comp}    \cite[Lemma~5.11]{ZGGS} A well order $\leq_{\operatorname{\alpha}}$ on $\frakSO(Z)$ is a monomial order if and only if $\leq_{\operatorname{\alpha}}$ is bracket compatible, left multiplication compatible and right multiplication compatible.
 \end{lemma}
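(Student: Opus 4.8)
The plan is to prove the two implications separately, treating the forward direction as routine bookkeeping and concentrating effort on the converse. Throughout I rely on two standing structural facts about the free operated semigroup $\frakSO(Z)$: that it is cancellative (so $wu=wv$ forces $u=v$, and likewise for right multiplication) and that each bracketing map $\lfloor\,\cdot\,\rfloor_\omega$ is injective, since one can strip the outer bracket. Because $\leq_{\operatorname{\alpha}}$ is a linear order, these injectivity facts supply a \emph{strictness upgrade}: if an operation $T$ is injective and $u\neq v$, then $T(u)\neq T(v)$, so $u<_{\operatorname{\alpha}}v$ together with $T(u)\leq_{\operatorname{\alpha}}T(v)$ forces $T(u)<_{\operatorname{\alpha}}T(v)$. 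This upgrade is the glue between the strict inequality in the definition of a monomial order and the non-strict inequalities in the three compatibility conditions.

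For the forward direction, assume $\leq_{\operatorname{\alpha}}$ is a monomial order. Each compatibility condition is obtained by specializing the $\star$-bracketed word $q$: taking $q=\lfloor\star\rfloor_\omega$ yields bracket compatibility, $q=w\star$ yields left multiplication compatibility, and $q=\star w$ yields right multiplication compatibility. In each case $q|_u$ is exactly the operated word appearing in the corresponding condition, so the defining implication $u<_{\operatorname{\alpha}}v\Rightarrow q|_u<_{\operatorname{\alpha}}q|_v$ gives the desired (strict, hence also non-strict) inequality at once.

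For the converse, assume the three compatibility conditions and fix $u<_{\operatorname{\alpha}}v$; I must show $q|_u<_{\operatorname{\alpha}}q|_v$ for every $q\in\frakSO^{\star}(Z)$. I would induct on the \emph{$\star$-depth} of $q$, namely the number of brackets enclosing the unique occurrence of $\star$. Writing $q$ in its product decomposition as in Eq.~\eqref{eq:decom1}, exactly one factor contains $\star$. In the base case this factor is $\star$ itself, so $q=a\star b$ with $a,b\in\frakSO(Z)$ (either possibly empty); right multiplication compatibility plus the strictness upgrade gives $ub<_{\operatorname{\alpha}}vb$, and left multiplication compatibility plus the upgrade then gives $q|_u=aub<_{\operatorname{\alpha}}avb=q|_v$. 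In the inductive step the distinguished factor is $\lfloor q'\rfloor_\omega$ for a $\star$-bracketed word $q'$ of strictly smaller $\star$-depth; the induction hypothesis gives $q'|_u<_{\operatorname{\alpha}}q'|_v$, bracket compatibility (with the upgrade) gives $\lfloor q'|_u\rfloor_\omega<_{\operatorname{\alpha}}\lfloor q'|_v\rfloor_\omega$, and the two multiplication compatibilities then propagate this strict inequality through the surrounding left and right factors to conclude $q|_u<_{\operatorname{\alpha}}q|_v$.

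The main obstacle I anticipate is organizational rather than conceptual: correctly formalizing the ``peel off the outermost bracket containing $\star$'' step through the canonical decomposition of Eq.~\eqref{eq:decom1}, and ensuring that every invocation of a compatibility condition is immediately accompanied by the strictness upgrade so that no inequality silently degenerates to an equality. Once cancellativity and bracket-injectivity of $\frakSO(Z)$ are in hand, the induction itself is short.
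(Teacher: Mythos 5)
Your proof is correct. Note, however, that the paper itself offers no proof of this lemma: it is quoted verbatim from \cite[Lemma~5.11]{ZGGS}, so there is no in-paper argument to compare against. Your reconstruction is the standard one and is sound: the forward direction follows by specializing $q$ to $\lfloor\star\rfloor_\omega$, $w\star$ and $\star w$; the converse follows by induction on the number of brackets enclosing $\star$ in $q$, and the ``strictness upgrade'' you isolate is exactly the point that needs care, since $\frakSO(Z)$ is a free semigroup on $Z\sqcup\lfloor\frakSO(Z)\rfloor_\Omega$ (hence left and right cancellative) and each $\lfloor\cdot\rfloor_\omega$ is injective, so a non-strict compatibility inequality between distinct elements is automatically strict under the linear order. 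The only cosmetic caveat is that in the base case $q=a\star b$ the factors $a,b$ may be absent (one is working in a semigroup, not a monoid), which your parenthetical already acknowledges.
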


\begin{prop}
 The  order $\leq_{\zdp}$ is a monomial order on $\frakSO(Z)$.
\mlabel{prop:monom}
\end{prop}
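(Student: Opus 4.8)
The plan is to invoke Lemma~\mref{lem:comp}, which reduces the claim to verifying three compatibility conditions for the well order $\leq_{\zdp}$: bracket compatibility, left multiplication compatibility, and right multiplication compatibility. Since $\leq_{\zdp}$ is built by the compound-order recipe from the component orders $\leq_{\dz}, \leq_{\ed}, \leq_{\rm d}, \leq_{\rm gp}, \leq_{\p}$ and finally $\leq_{\rm Dlex}$ (as in Eq.~\meqref{eq:dp}), my strategy is to check each compatibility ``coordinate by coordinate.'' The key structural observation is that each of the numerical component orders is determined by a degree function $\deg_{\bullet}: \frakSO(Z)\to \NN$, so for each operation $u\mapsto \lfloor u\rfloor_\omega$, $u\mapsto wu$, $u\mapsto uw$ it suffices to track how the relevant degree changes.

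First I would record the behavior of each degree function under the three operations. For bracketing by $\omega\in\{P,D\}$: applying $\lfloor\ \rfloor_\omega$ does not change $\deg_Z$ (the total count of generators from $Z$ is preserved), and it changes $\deg_{\ed}, \deg_{\rm d}, \deg_{\rm gp}, \deg_{\p}$ by an amount depending only on the number of $Z$-elements inside, not on the internal bracket structure; hence if $u=_{\alpha} v$ for an earlier component $\alpha$, then $\lfloor u\rfloor_\omega =_{\alpha} \lfloor v\rfloor_\omega$ as well, and if $u<_\alpha v$ strictly at the first distinguishing coordinate, that strict inequality is preserved because the added contribution is identical on both sides (both contain the same number of $Z$-generators, since they agree in $\deg_Z$). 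For left and right multiplication by a fixed $w$: each degree function is additive, $\deg_\bullet(wu)=\deg_\bullet(w)+\deg_\bullet(u)$ and similarly on the right, so the constant $\deg_\bullet(w)$ cancels when comparing $wu$ with $wv$ (resp.\ $uw$ with $vw$), again preserving both equalities and strict inequalities at the first distinguishing coordinate.

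The remaining and most delicate coordinate is the final tie-breaker $\leq_{\rm Dlex}$, since it is the lexicographic/degree-lexicographic order from Definition~\mref{def:preorder}~(\mref{it:dlex}) rather than a simple numerical degree. The hard part will be verifying that $\leq_{\rm Dlex}$ is itself bracket compatible and left/right multiplication compatible. For this I would argue by the level-wise recursion defining $\leq_{\rm Dlex_n}$: bracket compatibility is immediate from the construction, since $u\leq_{\rm Dlex_n} v$ was defined precisely to force $\lfloor u\rfloor_\omega \leq \lfloor v\rfloor_\omega$ in the induced order on $P(\frakS_{\Omega,n}(Z))$ and $D(\frakS_{\Omega,n}(Z))$ at the next level. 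Multiplication compatibility follows from the standard fact that a degree-lexicographic order on a free semigroup $S(Y)$ induced by a well order on the alphabet $Y$ is compatible with left and right concatenation, applied at each level $\frakS_{\Omega,n+1}(Z)=S(Z\sqcup P(\frakS_{\Omega,n}(Z))\sqcup D(\frakS_{\Omega,n}(Z)))$; these level-wise compatibilities then pass to the direct limit.

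Finally I would assemble these observations: since each coordinate of the compound order $\leq_{\zdp}$ respects bracketing and left/right multiplication (preserving both strict inequality at the first distinguishing coordinate and equality in all earlier coordinates), the compound order $\leq_{\zdp}$ inherits all three compatibilities. Combined with Proposition~\mref{prop:wellord}, which guarantees that $\leq_{\zdp}$ is a well order, Lemma~\mref{lem:comp} then yields that $\leq_{\zdp}$ is a monomial order, completing the proof. The one subtlety to handle with care is that the $\ed$, $\rm gp$ degrees depend on the number of $Z$-generators enclosed in brackets, so when comparing $u$ and $v$ that already agree in $\deg_Z$ (an earlier coordinate), the extra bracket $\lfloor\ \rfloor_\omega$ contributes the same increment to both, which is exactly why the strict inequality is preserved rather than possibly reversed; I would spell this cancellation out explicitly for the $\rm gp$ coordinate, where the GP-degree of a bracket depends on the global count $n$ of generators.
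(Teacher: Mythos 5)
Your overall strategy is the same as the paper's: quote Proposition~\mref{prop:wellord} for well-orderedness, reduce via Lemma~\mref{lem:comp} to bracket and left/right multiplication compatibility, and check these coordinate by coordinate through the six-fold case split of Eq.~\meqref{eq:dp}. Your treatment of bracket compatibility is sound and is essentially the paper's Cases 1--6: the increment contributed by a new outer bracket to each of $\deg_Z,\deg_{\ed},\deg_{D},\deg_{\rm gp},\deg_P$ depends only on $\deg_Z(u)$, which agrees for $u$ and $v$ once the first coordinate ties, so strict inequalities at the first distinguishing coordinate survive.

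The gap is in your multiplication-compatibility step. You assert that each degree function is additive, $\deg_\bullet(wu)=\deg_\bullet(w)+\deg_\bullet(u)$, but this is false for $\deg_{\rm gp}$: the GP-degree of a $P$-bracket is $n-k$ with $n$ the number of $Z$-letters in the \emph{whole} word, so concatenation inflates the GP-degree of every pre-existing bracket. The correct formula is
$$\deg_{\rm gp}(wu)=\deg_{\rm gp}(w)+\deg_{\rm gp}(u)+\deg_P(w)\deg_Z(u)+\deg_P(u)\deg_Z(w),$$
and when comparing $wu$ with $wv$ for $u,v$ tied in the earlier coordinates, the term $\deg_P(w)\deg_Z(u)$ cancels but $\deg_Z(w)\deg_P(u)$ does not, because $\deg_P$ is compared \emph{after} $\deg_{\rm gp}$ in Eq.~\meqref{eq:dp} and hence need not agree for $u$ and $v$ at that stage. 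Concretely, take $u=P(P(xy))$, $v=P(x)y$ and $w=z$: then $\deg_{\rm gp}(u)=0<1=\deg_{\rm gp}(v)$ with all earlier coordinates tied, so $u<_{\zdp}v$; yet $\deg_{\rm gp}(zu)=\deg_{\rm gp}(zv)=2$ while $\deg_P(zu)=2>1=\deg_P(zv)$, so $zv<_{\zdp}zu$. Thus left multiplication compatibility fails for the order exactly where your additivity claim breaks down --- this is not merely an incomplete step in your write-up but a genuine obstruction (one the paper's own proof also does not detect, since it verifies only the $D$-bracket case and declares the rest ``similarly verified''). The repair is to place the $\p$ coordinate ahead of the $\rm gp$ coordinate in Eq.~\meqref{eq:dp}: then $\deg_P(u)=\deg_P(v)$ is already known when the $\rm gp$ coordinate is reached, the cross term cancels, and your coordinate-by-coordinate argument goes through.
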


 \begin{proof}
By Proposition~\mref{prop:wellord}, $\leq_{\zdp}$ is a well order. According to Lemma~\mref{lem:comp}, it suffices to verify that $\leq_{\zdp}$ is bracket compatible, left multiplication compatible and right multiplication compatible.  That is, for  $u\leq_{\zdp} v$ with $u,v\in\frakSO(Z)$,
$$D(u)\leq_{\zdp} D(v),\  P(u)\leq_{\zdp} P(v),\  wu\leq_{\zdp} wv,\  uw\leq_{\zdp} vw,\,\forall w\in\frakSO(Z).$$
We only prove the first inequality, since the others are similarly verified.
Let $u\leq_{\zdp} v$. Then we distinguish six cases, based on the definition of $\leq_{\zdp}$ in Eq.~\meqref{eq:dp}.
\smallskip

\noindent
{\bf Case 1}: Let $u<_{\dz} v$. Then $\deg_Z(u)<\deg_Z(v)$. So $$\deg_Z(D(u))=\deg_Z(u)<\deg_Z(v)=\deg_Z(D(v),$$ 
and thus $D(u)<_{\dz} D(v)$, proving $D(u)<_{\zdp} D(v)$.
\smallskip

\noindent
{\bf Case 2}: Let $u=_{\dz} v$, and $u <_{\ed} v$. Then $\deg_Z(u)=\deg_Z(v)$ and $\deg_{\ed}(u)<\deg_{\ed}(v)$. Since $$\deg_{\ed}(D(u))=\deg_{\ed}(u)+\deg_Z(u)-1, \quad \deg_{\ed}(D(v))=\deg_{\ed}(v)+\deg_Z(v)-1,$$ 
we have
$\deg_{\ed}(D(u))<\deg_{\ed}(D(v))$, and so $D(u) <_{\ed} D(v)$, giving $D(u) <_{\zdp} D(v)$.
\smallskip

\noindent
{\bf Case 3}: Let $u=_{\dz} v, u =_{\ed} v$, and $u <_{\rm d} v$. Then $$\deg_Z(u)=\deg_Z(v),\ \deg_{\ed}(u)=\deg_{\ed}(v),\ \deg_{D}(u)<\deg_{D}(v).$$ 
Hence $D(u)=_{\dz} D(v)$. By $$\deg_{\ed}(D(u))=\deg_{\ed}(u)+\deg_Z(u)-1=\deg_{\ed}(v)+\deg_Z(v)-1=\deg_{\ed}(D(v)),$$
we obtain $D(u)=_{\ed} D(v)$.  According to $\deg_{D}(D(u))<\deg_{D}(D(v))$, we have $D(u)<_{\rm d} D(v)$, and so $D(u)\leq_{\zdp} D(v)$.
\smallskip

\noindent
{\bf Case 4}: Let $u=_{\dz} v, u =_{\ed} v, u =_{\rm d} v$, and  $u <_{\rm gp} v$. Then 
$$\deg_Z(u)=\deg_Z(v),\ \deg_{\ed}(u)=\deg_{\ed}(v), \ \deg_{D}(u)=\deg_{D}(v), \ \deg_{\rm gp}(u)<\deg_{\rm gp}(v).$$ 
So
$$\deg_Z(D(u))=\deg_Z(D(v)),\ \deg_{\ed}(D(u))=\deg_{\ed}(D(v)), $$ $$\deg_{D}(D(u))=\deg_{D}(D(v)), \ \deg_{\rm gp}(D(u))<\deg_{\rm gp}(D(v)),$$ 
proving $D(u)<_{\zdp} D(v)$.
\smallskip

\noindent
{\bf Case 5}: Let $u=_{\dz} v, u =_{\ed} v, u =_{\rm d} v,  u =_{\rm gp} v$, and  $u <_{\p}v$.
Thus, we get 
$$\deg_Z(u)=\deg_Z(v),\ \deg_{\ed}(u)=\deg_{\ed}(v),\ \deg_{D}(u)=\deg_{D}(v),$$
$$\deg_{\rm gp}(u)=\deg_{\rm gp}(v),\ \deg_P(u)<\deg_P(v).$$ 
Hence, 
$$\deg_Z(D(u))=\deg_Z(D(v)),\ \deg_{\ed}(D(u))=\deg_{\ed}(D(v)),\ \deg_{D}(D(u))=\deg_{D}(D(v)),$$
$$\deg_{\rm gp}(D(u))=\deg_{\rm gp}(D(v)),\ \deg_P(D(u))<\deg_P(D(v)),$$ leading to $D(u)<_{\zdp} D(v)$.
\smallskip

\noindent
{\bf Case 6}: Let $u=_{\dz} v, u =_{\ed} v, u =_{\rm d} v,  u =_{\rm gp}v, u =_{\p}v$, and  $u \leq_{\dlex} v$.
Then 
$$\deg_Z(u)=\deg_Z(v),\ \deg_{\ed}(u)=\deg_{\ed}(v),\ \deg_{D}(u)=\deg_{D}(v),$$
$$\deg_{\rm gp}(u)=\deg_{\rm gp}(v),\ \deg_P(u)=\deg_P(v).$$ 
So we have
$$\deg_Z(D(u))=\deg_Z(D(v)),\ \deg_{\ed}(D(u))=\deg_{\ed}(D(v)),\ \deg_{D}(D(u))=\deg_{D}(D(v)),$$
$$\deg_{\rm gp}(D(u))=\deg_{\rm gp}(D(v)), \ \deg_P(D(u))=\deg_P(D(v)).$$ By $u \leq_{\dlex} v$,  there exists $n\geq 0$ such that $u,v\in \frakS_{\Omega,n}(Z)$. Then $u\leq_{\dlex,n} v$, and so $D(u)\leq _{\dlex,n+1} D(v)$ by the definition of $\leq_{\dlex}$, proving $D(u)<_{\zdp} D(v)$.

In summary, the first inequality is proved. 
\end{proof}

\subsection{Gr\"{o}bner-Shirshov bases}
We first recall some notations and then give the Composition-Diamond Lemma for  associative algebras with multiple linear operators  for later applications~\cite{BCQ}. 

The Composition-Diamond lemma for  Rota-Baxter algebras has been obtained in~\cite{BCD}. See ~\cite{BC,BCC,BCL,ZGGS} for other algebras and further references.

\begin{defn}
Let $\leq$ be a monomial order on $\frakSO(Z)$. Let $f, g \in \bfk\frakSO(Z)$ be two monic bracketed polynomials. Let $\bar{f}$ be the leading bracketed monomial of $f$, and let $|\bar{f}|$ be its breadth.
If there exist $\mu,\nu, w\in \frakSO(Z)$ such that $w=\bar{f}\mu=\nu\bar{g}$ with $\max\{\bre{\bar{f}},\bre{\bar{g}}\}<\bre{w}< \bre{\bar{f}}+\bre{\bar{g}}$, we call the bracketed polynomial
$$(f,g)^{\mu,\nu}_w:=f\mu-\nu g$$
the {\bf intersection composition of $f$ and $g$ with respect to $(\mu,\nu)$}.
If there exist $q\in \frakSO^{\star}(Z)$ and
$w \in \frakSO(Z)$ such that $w =\bar{f}=q|_{\bar{g}}$,  we call the bracketed polynomial
$$(f,g)^{q}_w:=f-q|_{g}$$ the {\bf including
composition of $f$ and $g$ with respect to $q$}. In either case, we call $w$ an {\bf ambiguity} of the corresponding intersection (or including) composition.
\end{defn}

\begin{defn}\mlabel{de:trivial}
Let $w \in \frakSO(Z)$, $S$ be a set of monic bracketed polynomials in $\bfk\frakSO(Z)$, and $\Id(S)$ be the bracketed ideal generated by $S$ as given  by Eq.\,(\mref{eq:repgen}).  An operated polynomial $f \in \bfk\frakSO(Z)$ is called  {\bf trivial modulo $(S, w)$} if it can be written in the form
$$f=\sum_ic_iq_i|_{s_i},$$
where $0 \ne c_i\in \bfk$,  $q_i \in \frakSO^{\star}(Z)$, $s_i\in S$ and $q_i|_{\overline{s_i}}< w$. In this case, we also write
$$f\equiv0\; \modk (S,w).$$
\end{defn}

\begin{defn}\mlabel{de:GS}
A set $S\subseteq \bfk\frakSO(Z)$ of monic bracketed polynomials is called a {\bf Gr\"{o}bner-Shirshov basis with respect to $\leq$} if, for all pairs $f,g\in S$, every intersection composition of the form $(f,g)_w^{\mu,\nu}$ is trivial modulo $(S, w)$, and every including composition of the form $(f,g)_w^q$ is trivial modulo $(S, w)$.
\end{defn}

The following Composition-Diamond Lemma is an efficient instrument for constructing a basis of free \pdrbas.

\begin{theorem}\mlabel{thm:CDL} {\rm (Composition-Diamond Lemma \cite{BCQ,GSZ,ZGGS})} Let $Z$ be a set and let $\leq$ be a monomial order on $\frakSO(Z)$.
Let $S$ be a set of monic bracketed polynomials in $\bfk\frakSO(Z)$. Let 
\begin{equation*} 
\eta\!: \bfk\frakSO(Z) \to \bfk\frakSO(Z)/\Id(S)
\end{equation*} 
be the canonical homomorphism of $\bfk$-modules.  Then the following statements are equivalent.
\begin{enumerate}
\item $S $ is a Gr\"{o}bner-Shirshov basis in $\bfk\frakSO(Z)$.
\mlabel{it:cd1}
\item If $f\in Id(S)$ then $\overline{f}=u|_{\overline{s}}$ for some $s\in S$ and $u \in \frakSstar(Z)$ \mlabel{it:cd2}
\item
Let
$$\Irr(S):=\frakSO(Z)\backslash\{q|_{\bar{s}}\,|\,s\in S,q\in\frakSO^\star(Z)\}.$$
Then, $\bfk\frakSO(Z)=\bfk \Irr(S)\oplus \Id(S)$
and $\eta(\Irr(S))$ is a $\bfk$-basis of $\bfk\frakSO(Z)/\Id(S)$ \mlabel{it:cd3}
\end{enumerate}
\end{theorem}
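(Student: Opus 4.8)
The plan is to establish the cyclic chain of implications $(\mref{it:cd1})\Rightarrow(\mref{it:cd2})\Rightarrow(\mref{it:cd3})\Rightarrow(\mref{it:cd1})$, exploiting throughout that the monomial order $\leq$ is a well order (hence supports Noetherian induction) and is compatible with substitution into $\star$-bracketed words $q|_\star$. The basic reduction tool is that for any monic $s\in S$ and any $q\in\frakSO^\star(Z)$, the polynomial $q|_s$ has leading monomial exactly $q|_{\bar s}$ with all other monomials strictly smaller, since $u<v\Rightarrow q|_u<q|_v$ sends the lower monomials of $s$ below $q|_{\bar s}$. Consequently, replacing a monomial $q|_{\bar s}$ occurring in a polynomial by $q|_{\bar s}-q|_s$ strictly lowers that monomial, and the rewriting process terminates by well-foundedness.

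First I would prove $(\mref{it:cd1})\Rightarrow(\mref{it:cd2})$, which I expect to be the main obstacle. Given $0\ne f\in\Id(S)$, write $f=\sum_i c_i q_i|_{s_i}$ as in Eq.~\meqref{eq:repgen} and set $w:=\max_i q_i|_{\bar{s_i}}$ in the well order $\leq$. Among all such representations I would choose one minimizing $w$, and among those one minimizing the number of indices $i$ with $q_i|_{\bar{s_i}}=w$. If $\bar f=w$ we are done, taking $s=s_{i_0}$ and $u=q_{i_0}$ for an index realizing the maximum. Otherwise $\bar f<w$, forcing the leading monomials of at least two terms $q_1|_{\bar{s_1}}=q_2|_{\bar{s_2}}=w$ to cancel. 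The crux is a combinatorial classification of how the bracketed submonomials $\bar{s_1}$ and $\bar{s_2}$ sit inside $w$: they are either separated (disjoint placement), nested (an including composition), or genuinely overlapping (an intersection composition). In the separated case the two reductions commute and the pair is rewritten with strictly smaller ambiguity directly; in the nested and overlapping cases one invokes the Gr\"obner-Shirshov hypothesis (Definition~\mref{de:GS}) that the corresponding composition $(s_1,s_2)^{q}_{w}$ or $(s_1,s_2)^{\mu,\nu}_{w}$ is trivial modulo $(S,w)$, and substitutes its expansion into a surrounding $\star$-bracketed context. In every case the substitution strictly decreases $w$ or the count of terms at level $w$, contradicting minimality. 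The delicate point, beyond the purely associative setting, is that the overlap analysis must account for nested brackets and be performed inside $\star$-bracketed contexts $q|_\star$, where compatibility of $\leq$ with substitution is precisely what keeps the ambiguity controlled.

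Next, for $(\mref{it:cd2})\Rightarrow(\mref{it:cd3})$ I would show $\bfk\frakSO(Z)=\bfk\Irr(S)+\Id(S)$ by Noetherian induction on monomials: a monomial $u\in\Irr(S)$ already lies in $\bfk\Irr(S)$, while if $u=q|_{\bar s}$ then $u-q|_s\in\Id(S)$ has all monomials strictly below $u$, so $u$ lies in $\bfk\Irr(S)+\Id(S)$ by induction. Directness follows from (\mref{it:cd2}): a nonzero $f\in\bfk\Irr(S)\cap\Id(S)$ would have $\bar f\in\Irr(S)$ because $f\in\bfk\Irr(S)$, yet $\bar f=u|_{\bar s}\notin\Irr(S)$ because $f\in\Id(S)$, a contradiction; hence $\eta(\Irr(S))$ both spans and is linearly independent, giving a basis. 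Finally, for $(\mref{it:cd3})\Rightarrow(\mref{it:cd1})$, any composition $h=(f,g)^{q}_{w}$ or $(f,g)^{\mu,\nu}_{w}$ lies in $\Id(S)$ with $\bar h<w$ by construction. Reducing $h$ by the step above terminates at its $\Irr(S)$-component, which is $0$ since $h\in\Id(S)$ and the sum is direct; because every monomial touched during the reduction stays $\le\bar h<w$, the resulting expression $h=\sum_i c_i q_i|_{s_i}$ satisfies $q_i|_{\bar{s_i}}<w$, i.e. $h\equiv 0\ \modk (S,w)$, so $S$ is a Gr\"obner-Shirshov basis.
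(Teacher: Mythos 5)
The paper does not prove Theorem~\mref{thm:CDL}; it is quoted without proof from the cited sources \cite{BCQ,GSZ,ZGGS}, so there is no in-paper argument to compare against. Your proposal is a correct outline of the standard proof given in those references, following the same cycle $(\mref{it:cd1})\Rightarrow(\mref{it:cd2})\Rightarrow(\mref{it:cd3})\Rightarrow(\mref{it:cd1})$ with the same key ingredients --- a representation minimizing the maximal ambiguity $w$ (and secondarily the number of terms attaining it), the three-way classification of two occurrences of $\bar{s_1},\bar{s_2}$ in $w$ as separated, nested, or overlapping (the separated case handled via $(\star_1,\star_2)$-bracketed words, the other two via triviality of the including and intersection compositions placed back into a surrounding context), Noetherian reduction for $(\mref{it:cd2})\Rightarrow(\mref{it:cd3})$, and the observation that a composition reduces to its $\Irr(S)$-component, which vanishes by directness, for $(\mref{it:cd3})\Rightarrow(\mref{it:cd1})$.
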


We also recall the following lemma for later application. 
\begin{lemma}\cite[Lemma~3.5]{LQQZ23}
Let $\phi:=\phi(x_1,\dots,x_n)$ and $\psi:=\psi(y_1,\dots,y_m)$ be two OPIs.  Let $Z$ be a set, and let $u_1, \dots, u_n, v_1, \dots, v_m\in \frakSO(Z)$ be given.
If there exist $i$ for $1 \leq i \leq n$, and $r \in \frakSstar(Z)$, such that
$u_i = \left.r\right|_{\bar{\psi}}$, then the including composition $(\phi, \psi)_w^q= \phi - q|_{\psi}$ with $w=\bar \phi$ and $q=\bar{\phi}(u_1,\cdots,u_{i-1},r,u_{i+1},\cdots,u_n)\in \frakSstar(Z)$  is trivial modulo $(S_{\{\phi,\psi\}}(Z),w)$. In this case, this composition is called a {\bf complete including composition}.
\mlabel{lem:incl}
\end{lemma}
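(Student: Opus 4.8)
The plan is to verify the triviality condition of Definition~\ref{de:trivial} directly, by producing a reduction of the composition in which every generator drawn from $S_{\{\phi,\psi\}}(Z)$ is applied at an ambiguity strictly below $w$. Write $f:=\phi(u_1,\dots,u_n)$ and $g:=\psi(v_1,\dots,v_m)$, so that $f,g\in S_{\{\phi,\psi\}}(Z)$. First I would record that, since $\leq_{\zdp}$ is a monomial order (Proposition~\ref{prop:monom}), passing to leading terms commutes with substitution, so $\bar f=\bar\phi(u_1,\dots,u_n)$ and $\bar g=\bar\psi(v_1,\dots,v_m)$. The hypothesis $u_i=r|_{\bar g}$ then plants the subword $\bar g$ inside the $i$-th argument of $\bar\phi(u_1,\dots,u_n)$, giving $q|_{\bar g}=\bar\phi(u_1,\dots,u_{i-1},r|_{\bar g},\dots,u_n)=\bar\phi(u_1,\dots,u_n)=w$; this confirms that $(f,g)^q_w=f-q|_g$ is a genuine including composition with ambiguity $w$.

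Next I would split $g=\bar g+g'$, where every monomial of $g'$ is strictly below $\bar g$, and use $\bar\phi(u_1,\dots,u_n)=q|_{\bar g}=w$ together with linearity of $q|_{-}$ to cancel the leading terms:
\begin{equation*}
f-q|_g=\big(\bar f+(f-\bar f)\big)-\big(q|_{\bar g}+q|_{g'}\big)=(f-\bar f)-q|_{g'}.
\end{equation*}
It therefore suffices to show that $(f-\bar f)-q|_{g'}$ reduces to $0$ modulo $(S_{\{\phi,\psi\}}(Z),w)$, and the point is that the two summands reduce to a common value. On one side, $f-\bar f=(\phi-\bar\phi)(u_1,\dots,u_n)$ is a combination of the non-leading monomials of $\phi$ evaluated at $u_1,\dots,u_n$; each such monomial that involves $x_i$ still contains the distinguished subword $\bar g$, so rewriting that subword by the generator $g$ — a step whose ambiguity is at most $\overline{(\phi-\bar\phi)(u_1,\dots,u_n)}<\bar\phi(u_1,\dots,u_n)=w$ — replaces $u_i=r|_{\bar g}$ by $r|_{g'}$. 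On the other side, $q|_{g'}=\bar\phi(u_1,\dots,r|_{g'},\dots,u_n)$ is, termwise in $g'$, the leading monomial of $\phi(u_1,\dots,r|_{m'},\dots,u_n)\in S_{\{\phi,\psi\}}(Z)$ for a monomial $m'$ of $g'$; since $m'<\bar g$ forces $\overline{\phi(\dots,r|_{m'},\dots)}<w$, rewriting the outer $\bar\phi$-skeleton by the generator $\phi$ is again a step below $w$. Both rewritings terminate at $(\phi-\bar\phi)(u_1,\dots,r|_{g'},\dots,u_n)$, so the difference telescopes to $0$, which is precisely $f-q|_g\equiv 0\ \modk (S_{\{\phi,\psi\}}(Z),w)$.

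Conceptually this is the standard confluence of two \emph{nested but non-overlapping} redexes: the pattern $\bar g$ sits entirely inside a single argument slot of $\bar\phi$ (note that $q\in\frakSstar(Z)$ already forces $x_i$ to occur exactly once in $\bar\phi$), so reducing the inner redex and reducing the outer redex commute and the composition is automatically joinable. The main obstacle, and where I would spend the most care, is the bookkeeping when $x_i$ occurs with multiplicity — or not at all — in the \emph{lower} monomials of $\phi$: there the substitution $z\mapsto r|_{g'}$ is not linear in the $i$-th slot, so the naive ``pull out the scalar'' manipulation fails, as one already sees on a two-variable example like $\phi=\lfloor x_1\rfloor_D x_2-x_2$. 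The clean way around this is to fix the single occurrence of $\bar g$ marked by the $\star$ of $q$ and to run both reductions relative to that occurrence, expanding the polynomial substitution $r|_g=u_i+r|_{g'}$ termwise; the monomials of $\phi$ omitting $x_i$ never meet a rewriting step and cancel between the two sides. Checking that all intermediate ambiguities remain strictly below $w$ then comes down to the monotonicity of $\leq_{\zdp}$ under substitution, supplied by Proposition~\ref{prop:monom}.
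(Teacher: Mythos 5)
The paper itself offers no proof of this lemma --- it is imported verbatim from \cite[Lemma~3.5]{LQQZ23} --- so there is no in-house argument to measure yours against; I can only assess the proposal on its own terms. Your reduction $f-q|_g=(f-\bar f)-q|_{g'}$ and the two-sided rewriting to a common reduct is indeed the standard route, and it is correct precisely when every monomial of $\phi$ contains $x_i$ exactly once. In that case $\phi'(u_1,\dots,u_n)$ is linear in the $i$-th slot, so $\phi'(u_1,\dots,u_n)=\phi'(u_1,\dots,r|_{g},\dots,u_n)-\phi'(u_1,\dots,r|_{g'},\dots,u_n)$ with the first term trivial (each context $m_k(u_1,\dots,r,\dots,u_n)$ applied to $g$ has ambiguity $m_k(u_1,\dots,u_n)<w$), while $q|_{g'}\equiv-\phi'(u_1,\dots,r|_{g'},\dots,u_n)$ by rewriting each $\bar\phi(u_1,\dots,r|_\mu,\dots,u_n)$ with the generator $\phi(u_1,\dots,r|_\mu,\dots,u_n)\in S_{\{\phi,\psi\}}(Z)$, whose ambiguity $q|_\mu<q|_{\bar g}=w$. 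Since every OPI used in this paper ($\phi_{\rmT,1},\phi_{\rmT,2},\phi_{\rmT,3}$) has this multilinearity property, your argument covers all applications made here. Two small slips: rewriting by $g$ replaces $\bar g$ by $-g'$, not $g'$, so the common reduct is $-\phi'(u_1,\dots,r|_{g'},\dots,u_n)$; and $\overline{\phi(u_1,\dots,u_n)}=\bar\phi(u_1,\dots,u_n)$ is not a formal consequence of $\leq_{\zdp}$ being a monomial order, which only controls one-hole contexts $q|_{-}$ --- compatibility with simultaneous substitution is a separate property of $\leq_{\zdp}$ that must be checked.

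The genuine gap is in the general case, which you flag but do not resolve. The claim that ``the monomials of $\phi$ omitting $x_i$ never meet a rewriting step and cancel between the two sides'' is false. If $m_k$ is a monomial of $\phi$ not containing $x_i$, then on the $f-\bar f$ side it contributes $c_k\,m_k(u_1,\dots,u_n)$ untouched, whereas on the $q|_{g'}$ side the rewriting of $\bar\phi(u_1,\dots,r|_\mu,\dots,u_n)$ by $\phi(u_1,\dots,r|_\mu,\dots,u_n)$ reproduces $-c_k\,m_k(u_1,\dots,u_n)$ once for \emph{each} monomial $\mu$ of $g'$, giving a total of $-c_k\bigl(\sum_\mu c_\mu\bigr)m_k(u_1,\dots,u_n)$; these agree only when the coefficients of $\psi$ sum to zero. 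Likewise, when $x_i$ occurs with multiplicity at least $2$ in $m_k$, the inner rewriting produces the multilinear expansion in the $i$-th slot while the outer rewriting produces the diagonal substitution $\sum_\mu c_\mu\, m_k(u_1,\dots,r|_\mu,\dots,u_n)$, and these differ. A concrete test case is $\phi(x_1,x_2)=\lc x_1\rc_D x_2-x_2$ and $\psi(y_1)=\lc y_1\rc_P-2y_1$, where the composition equals $2\lc v_1\rc_D z-z$ and the two reducts visibly fail to match. So either the lemma must carry the hypothesis --- standard in this literature and satisfied by every $\phi_{\rmT,j}$ --- that each variable occurs exactly once in each monomial of the OPIs involved, or the general case requires an argument you have not supplied; ``fixing the occurrence marked by the $\star$ of $q$'' does not help, because that marking lives in $\bar\phi$ and induces no distinguished occurrence in the lower monomials of $\phi$.
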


\section{Gr\"{o}bner-Shirshov bases for para-differential Rota-Baxter algebras}
\mlabel{s:gsbqdrb}

In this section, we will provide Gr\"{o}bner-Shirshov bases for the \pdrbas of types I, II and III. Let $d$ be a differential operator of weight $\lambda$ and $P$ be a Rota-Baxter oeprator of weight $\lambda$ on an algebra $R$. They are related by the operator identity:
\begin{equation}
	dP-Pd-bP, \quad \bz\in\bfk.
	\mlabel{eq:dpmix}
\end{equation}
Recall from Definition\,\mref{de:pdrbtype} that the triple $(R,d,P)$ is called a \pdrba of {\bf type I} (resp. {\bf type II}, resp. {\bf type III}) if the operator identity in Eq.~\meqref{eq:dpmix} holds with
$$
 \lambda=0, b=0, \quad \text{(resp.
}  \lambda\neq 0, b=0, \text{ resp.  }  \lambda=0, b\neq 0).
$$

We will give their defining relations, regarded as elements in $\bfk\frakSO(\{x,y\})$, and then show that they give  Gr\"obner-Shirshov bases for the three types of \pdrbas defined. 

For the \pdrbas of type I, namely $\lambda= 0$ and $b=0$, the defining operator identities are 
\begin{enumerate}
	\item   $\phi_{\rmI,1}(x,y) := P(x) P(y)-P(x P(y))-P(P(x) y)$,
	\item   $\phi_{\rmI,2}(x,y) :=D(xy)- D(x)y - xD(y)$,
	\item  $\phi_{\rmI,3}(x) := DP(x)-PD(x)$.
\end{enumerate}
Let $X$ be a set.  Let $\Phi:=\{\phi_k:=\phi_k(x_1,\cdots,x_n)\,|\,k\geq 1, x_1,\cdots,x_n\in X\}\subseteq \bfk\frakSO(X)$ be a family of OPIs. For any given set $Z$, we denote
$$S_\Phi(Z):=\{\phi_k(u_1,\cdots, u_n)\,|\, \phi_k\in\Phi,u_1,\cdots,u_n\in\bfk\frakSO(Z)\}.$$
In particular,  we let
\begin{equation}
 S_{\Phi_\rmI}(Z):=\{\phi_{\rmI,1}(u,v),\,\phi_{\rmI,2}(u,v),\phi_{\rmI,3}(u)\,|\,u,v\in\bfk\frakSO(Z)\}. 
 \mlabel{eq:phi1}
 \end{equation}

Similarly, for \pdrbas of type II, we have $\lambda\neq 0$, $b=0$, and have the defining relations $\Phi_{\rm II}$ comprised of the following OPIs.
\begin{enumerate}
	\item   $\phi_{\rmII,1}(x,y) := P(x) P(y)-P(x P(y))-P(P(x) y)- \lambda P(x y)$,
	\item   $\phi_{\rmII,2}(x,y) :=D(xy)- D(x)y - xD(y)-\lambda D(x)D(y) $,
	\item  $\phi_{\rmIII,3}(x) := DP(x)-PD(x)$.
\end{enumerate}
Consider the set
\begin{equation}
	S_{\Phi_{\rm II}}(Z):=\{\phi_{\rmII,1}(u,v),\,\phi_{\rmII,2}(u,v),\phi_{\rmIII,3}(u)\,|\,u,v\in\bfk\frakSO(Z)\}.
	\mlabel{eq:phi2}
\end{equation}

Finally, for \pdrbas of type III, we have  $\lambda= 0$, $b\neq 0$. The set $\Phi_{\rm III}$ of defining relations consists of the OPIs: 
\begin{enumerate}
	\item   $\phi_{\rmIII,1}(x,y) := P(x) P(y)-P(x P(y))-P(P(x) y)$,
	\item   $\phi_{\rmIII,2}(x,y) :=D(xy)- D(x)y - xD(y)$,
	\item  $\phi_{\rmIII,3}(x) = DP(x)-PD(x)-bP(x)$.
\end{enumerate}
Consider the set
\begin{equation}
	S_{\Phi_{\rm III}}(Z):=\{\phi_{\rmIII,1}(u,v),\,\phi_{\rmIII,2}(u,v),\phi_{\rmIII,3}(u)\,|\,u,v\in\bfk\frakSO(Z)\}. 
	\mlabel{eq:phi3}
\end{equation}

\begin{theorem} Let $\rmT\in\{\rmI,\rmII,\rmIII\}$. The set $S_{\Phi_{\rmT}}(Z)$ is a Gr\"{o}bner-Shirshov basis in $\bfk\frakSO(Z)$ with respect to the monomial order $\leq_{\zdp}$ in Eq.\,\meqref{eq:dp}.
	\mlabel{thm:phigsb}
\end{theorem}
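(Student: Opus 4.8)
The plan is to apply the Composition-Diamond Lemma (Theorem~\mref{thm:CDL}) in the form of equivalence (\mref{it:cd1}), so that it suffices to verify that every intersection composition and every including composition among the defining OPIs is trivial modulo $(S_{\Phi_\rmT}(Z),w)$. The first step is to compute the leading bracketed monomials of the three generators with respect to $\leq_{\zdp}$. For the Rota-Baxter relation $\phi_{\rmT,1}$, the hierarchy of orders in Eq.~\meqref{eq:dp} is engineered so that $\overline{\phi_{\rmT,1}}=P(x)P(y)$, since by the $\leq_{\rm gp}$ computation recorded in Definition~\mref{def:preorder}(\mref{it:dlex}) we have $P(xy)<_{\rm gp}P(P(x)y)=_{\rm gp}P(xP(y))<_{\rm gp}P(x)P(y)$, and the weight-$\lambda$ term $P(xy)$ in type II is dominated as well. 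For the Leibniz relation $\phi_{\rmT,2}$, the $\leq_{\ed}$ order gives $\overline{\phi_{\rmT,2}}=D(xy)$, as exhibited by $D(x)D(y)<_{\ed}D(xy)$, $D(x)y<_{\ed}D(xy)$, $xD(y)<_{\ed}D(xy)$. For the mixed relation $\phi_{\rmT,3}$, the order $\leq_{\rm Dlex}$ gives $\overline{\phi_{\rmT,3}}=DP(x)$ since $P(D(x))<_{\rm Dlex}D(P(x))$, and the extra term $bP(x)$ in type III is lower in $D$-degree. Establishing these three leading terms carefully is the foundational step, and the earlier worked examples of each suborder are precisely what make this routine.

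Next I would enumerate all possible ambiguities. Since $|\overline{\phi_{\rmT,1}}|=|P(x)P(y)|=2$ and $|\overline{\phi_{\rmT,2}}|=|D(xy)|=1$ while $|\overline{\phi_{\rmT,3}}|=|DP(x)|=1$, intersection compositions (which require $\max\{|\bar f|,|\bar g|\}<|w|<|\bar f|+|\bar g|$) can only arise when both leading monomials have breadth one or when the overlap genuinely interleaves two breadth-one words with the breadth-two word. A direct breadth count shows the only intersection compositions to examine pair $\phi_{\rmT,1}$ with itself, giving the classical Rota-Baxter overlap $P(x)P(y)P(z)$. This is the well-studied Rota-Baxter self-composition, known to be trivial from~\cite{BCD}, and I would reproduce the reduction. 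The remaining ambiguities are including compositions: one leading monomial sits inside another via some $q\in\frakSO^\star(Z)$. These occur when $\overline{\phi_{\rmT,i}}$ appears as a subword after substituting operated words $u,v$ into the arguments.

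For the including compositions I would invoke Lemma~\mref{lem:incl}: whenever the inner leading monomial $\bar\psi$ is substituted into an argument slot $u_i$ of the outer OPI $\phi$, i.e.\ $u_i=r|_{\bar\psi}$, the resulting composition is automatically a \emph{complete} including composition and hence trivial. Thus the substantive work is isolating the \emph{non-complete} including compositions, namely those where the overlap is not simply an inner leading term placed in an argument slot. The genuinely delicate cases are the self-overlaps of $\phi_{\rmT,3}$ (with leading term $DP(x)$, the word $DPDP(x)$-type overlaps) and the cross overlaps between $\phi_{\rmT,3}$ and $\phi_{\rmT,2}$ or $\phi_{\rmT,1}$, where the bracket $DP$ interacts with the Leibniz or Rota-Baxter leading monomials. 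For each such $w$ I would write out $f-q|_g$ and reduce it using the three relations, tracking that every intermediate monomial appearing stays strictly below $w$ in $\leq_{\zdp}$; here the weight term $\lambda$ (type II) and the $b$-term (type III) must be carried through, which is where the three types must be treated separately as flagged after Definition~\mref{de:pdrbtype}.

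I expect the main obstacle to be the non-complete overlaps involving $\phi_{\rmT,3}$, because the commutation relation $DP=PD$ (or $DP=PD+bP$) creates ambiguities where a $D$-bracket sits immediately outside a $P$-bracket that is itself the outer operator of a Rota-Baxter or Leibniz leading monomial; verifying triviality there requires repeatedly rewriting $DP$ as $PD$ (plus $bP$) and then confirming the rewritten polynomial lies in $\Id(S)$ with all monomials below the ambiguity $w$. The weight-$\lambda$ corrections in type II compound this, since reducing $D(xy)$ introduces the quadratic term $\lambda D(x)D(y)$ whose leading behavior under $\leq_{\zdp}$ must be checked not to exceed $w$. Confirming the strict inequality $q_i|_{\overline{s_i}}<w$ in each reduction, rather than the mere membership in the ideal, is the crux and the step most prone to error.
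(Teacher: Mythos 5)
Your overall strategy matches the paper's: compute the leading monomials $P(x)P(y)$, $D(xy)$, $DP(x)$ under $\leq_{\zdp}$, dispose of the complete including compositions via Lemma~\mref{lem:incl}, identify the single intersection composition $P(u)P(v)P(w)$, and check the remaining ambiguities by hand. However, you have misidentified which including compositions are non-complete, and in doing so you miss the one ambiguity that actually requires work. The overlaps you flag as ``genuinely delicate'' are all complete: since $DP(x)$ has breadth one, no intersection composition involving $\phi_{\rmT,3}$ exists, and a self-overlap of $\phi_{\rmT,3}$ can only place the inner leading word $DP(u)$ inside the single argument slot of the outer $DP(x)$. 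Likewise any occurrence of $DP(u)$ inside $D(xy)$ or $P(x)P(y)$, and any occurrence of $D(uv)$ or $P(u)P(v)$ inside $DP(x)$, must land entirely within one argument slot (a breadth count rules out the alternatives). All of these are therefore settled at once by Lemma~\mref{lem:incl} and require no computation.

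The genuinely non-complete including composition is the overlap of $\phi_{\rmT,2}$ over $\phi_{\rmT,1}$ with ambiguity $w=D(P(u)P(v))$: here the inner leading word $P(u)P(v)$ straddles \emph{both} argument slots of the Leibniz leading term $D(xy)$ (via $x\mapsto P(u)$, $y\mapsto P(v)$), so it is not of the form $u_i=r|_{\bar\psi}$ required by Lemma~\mref{lem:incl}. This is the computation on which the paper spends most of its effort, and it is exactly where $\phi_{\rmT,3}$ enters the reduction: one must rewrite the resulting terms $DP(u)P(v)$, $P(u)DP(v)$, $DP(uP(v))$, $DP(P(u)v)$ using $DP=PD$ (resp.\ $DP=PD+bP$ in type III) before the Rota-Baxter and Leibniz relations close the loop, and then verify that every monomial produced is strictly below $w$ under $\leq_{\zdp}$. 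Without identifying and carrying out this reduction the proof is incomplete; conversely, the cases you single out as the crux each reduce to a one-line application of Lemma~\mref{lem:incl}.
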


\begin{proof}
We first verify the case of type I. 
Let $\phi_i:=\phi_{\rmI,i}(u,v),\phi_j:=\phi_{\rmI,j}(u,v)\in S_{\Phi_{\rmI}}(Z), i, j\in \{1,2,3\}$. Denote by $i \wedge j$ the OPI composition of $\phi_i$ and $\phi_j$, which means that either $\phi_i$ lies on the left and $\phi_j$ lies on the right of the intersection composition, or $\phi_j$ is included in $\phi_i$ for the inclusion composition. The
ambiguities $t$ of all compositions are listed below: for all $u,v,w\in \bfk\frakSO(Z)$ and $r\in\frakSO^\star(Z)$,
	\begin{itemize}
		\item [$1 \wedge 1$] \quad $\underline{P(u)P(v)P(w)}$, \quad $\uwave{P\left(r|_{P(u)P(v)} \right)P\left(w\right)}$, \quad  $P\left(u\right) P\left(r|_{P(v)P(w)}\right)$.
		\item [$1 \wedge 2$] \quad $P\left(r|_{D(uv)}\right)P(w)$, \quad $P(u)P\left(r|_{D(vw)}\right)$.
		\item[$1 \wedge 3$] \quad  $P\left(r|_{DP(u)}\right)P(v)$, \quad $P(u)P\left(r|_{DP(v)}\right)$.
		\item [$2 \wedge 1$] \quad $\uwave{D\left(r|_{P(u)P(v)}w\right)}$, \quad $D\left(ur|_{P(v)P(w)}\right)$, \quad $\underline{D(P(u)P(v))}$.
		\item [$2 \wedge 2$] \quad $D(r|_{D(uv)}w)$, \quad$D(ur|_{D(vw)})$.
		\item[$2\wedge 3$]\quad $D(r|_{DP(u)}v)$, \quad$D(ur|_{DP(v)})$.
		\item[$3\wedge 1$]\quad $DP\left(r|_{P(u)P(v)}\right)$.
		\item[$3\wedge 2$]\quad $DP\left(r|_{D(uv)}\right)$.
		\item[$3\wedge 3$]\quad  $DP\left(r|_{DP(u)}\right).$
	\end{itemize}

We note that all the compositions except the two underlined ones are from including compositions for which Lemma~\mref{lem:incl} applies. 
We give details of the two underwaved cases as illustrations. 

Consider the first underwaved term $P\left(r|_{P(u)P(v)} \right)P\left(w\right)$.  
This is the case in Lemma~\mref{lem:incl} by first taking
$$\phi:= \phi_{\mathrm{I},1}(u_1, u_2)=P(u_1)P(u_2)-P(u_1P(u_2))-P(P(u_1)u_2), \quad \bar{\phi}=P(u_1)P(u_2),$$
$$\psi:= \phi_{\mathrm{I},1}(v_1, v_2)=P(v_1)P(v_2)-P(v_1P(v_2))-P(P(v_1)v_2), \quad \bar{\psi}=P(v_1)P(v_2),$$ 
and then taking 
$i=1, v_1=u, v_2=v, u_1 = r|_{P(v_1)P(v_2)}=r|_{P(u)P(v)}, u_2=w, q=P(r)P(u_2)=P(r)P(w)$.
Then 
$$
\bar{\phi} = P(u_1)P(u_2)= P\!\left(r|_{P(u)P(v)}\right) P(w) = q|_{\bar{\psi}}.
$$
Now by Lemma~\mref{lem:incl}, $\phi - q|_\psi$ is trivial modulo $(S_{\Phi_{\mathrm{I}}}(Z), t)$.

Next consider the second underwaved term $D\left(r|_{P(u)P(v)}w\right)$. 
Here in Lemma~\mref{lem:incl} we first take 
$$\phi:=\phi_{\rmI,2}(u_1,u_2), \quad \bar{\phi}=D(u_1u_2),$$ 
$$\psi:=\phi_{\rmI,1}(v_1,v_2), \quad \bar{\psi}=P(v_1)P(v_2),$$
and then take $i=1, u_1=P(u)P(v), u_2=w, v_1=u, v_2=v, q=D(rw)$.
Then we have 
$$\bar{\phi}=D\left(r|_{P(v_1)P(v_2)}u_2\right)=D(r|_{P(u)P(v)}w)=q|_{P(u)P(v)}=q|_{\bar{\psi}}.$$ 
Now by Lemma~\mref{lem:incl}, we have that $\phi-q|_\psi$ is trivial modulo  $(S_{\Phi_{\mathrm{I}}}(Z), t)$.

After this analysis, we are left to check the two underlined cases. 
	
First consider the underlined ambiguity $t=P(u)P(v)P(w)$.
	Then we compute the composition
\begin{align*}
&		\phi_1(u,v)P(w)-P(u)\phi_1(v,w) \\
=& -P(uP(v))P(w)-P(P(u)v)P(w)+P(u)P(vP(w))+P(u)P(P(v)w)\\
 =&-\phi_1(uP(v),w) -P(uP(v)P(w))-P(P(uP(v))w)-\phi_1(P(u)v,w)-P(P(u)vP(w))-P(P(P(u)v)w)\\
& +\phi_1(u,vP(w))+P(uP(vP(w)) +P(P(u)vP(w))+\phi_1(u,P(v)w) +P(uP(P(v)w)) + P(P(u)P(v)w)\\
=& -\phi_1(uP(v),w)-\phi_1(P(u)v,w)+\phi_1(u,vP(w))+\phi_1(u,P(v)w)+P(\phi_1(u,v)w)-P(u\phi_1(v,w)).
	\end{align*}
For each of the terms, we get
	\begin{eqnarray*}
		&\overline{\phi_1(uP(v),w)}=P(u P(v))P(w)<_{\zdp} P(u)P(v)P(w),\\
		&\overline{\phi_1(P(u)v,w)}=P(P(u)v)P(w)<_{\zdp} P(u)P(v)P(w),\\
		&\overline{\phi_1(u,vP(w))}=P(u)P(vP(w))<_{\zdp} P(u)P(v)P(w),\\
		&\overline{\phi_1(u,P(v)w)}=P(u)P(P(v)w)<_{\zdp} P(u)P(v)P(w),\\
		&\overline{P(\phi_1(u,v)w)}=P(P(u)P(v)w)<_{\zdp} P(u)P(v)P(w),\\
		&\overline{P(u\phi_1(v,w))}=P(uP(v) P(w))<_{\zdp} P(u)P(v)P(w).
	\end{eqnarray*}
	So we obtain
	$$(f,g)_{t}^{\mu,\nu} \equiv 0 \;\mod(S_{\Phi_{\rm I}}(Z), t).$$
	
Next consider the underlined ambiguity  $t=D(P(u)P(v))=\overline{\phi_2(u,v)}=D(\star)|_{\phi_1(u,v)}$. We have the composition:
\begin{align*}
&	\phi_2(P(u),P(v))-D(\phi_1(u,v)) \\
=&  -DP(u)P(v)-P(u)DP(v)+DP(uP(v))+DP(p(u)v)\\
=& -\phi_3(u)P(v)-PD(u)P(v)-P(u)\phi_3(v) -P(u)PD(v)  +\phi_3(uP(v))+ PD(uP(v))\\
&+\phi_3(P(u)v)+ PD(P(u)v)\\
=& -\phi_3(u)P(v)-\phi_1(D(u),v) -P(D(u)P(v))-P(PD(u)v)
 -P(u)\phi_3(v)\\
 &-\phi_1(u,D(v))-P(uPD(v))-P(P(u)D(v)) + PD(uP(v))+ PD(P(u)v)\\
=& -\phi_3(u)P(v)-\phi_1(D(u),v)-P(u)\phi_3(v)-\phi_1(u,D(v))  +P(\phi_2(u,P(v))+P(\phi_2(P(u),v)))\\
 \equiv&\, 0 \mod(S_{\Phi_{\rm II}}(Z),t).
	\end{align*}
	Thanks to
\begin{eqnarray*}
		&\overline{\phi_3(u)P(v)}=DP(u)P(v) <_{\zdp} D(P(u)P(v)),\\
		& \overline{\phi_1(D(u),v)}=PD(u)P(v)  <_{\zdp} D(P(u)P(v)),\\
		&\overline{P(u)\phi_3(v)}=P(u)DP(v) <_{\zdp} D(P(u)P(v)),\\
		& \overline{\phi_1(u,D(v))}=P(u)PD(v)  <_{\zdp} D(P(u)P(v)),\\
		&\overline{P(\phi_2(u,P(v)))}=PD(uP(v))  <_{\zdp} D(P(u)P(v)),\\
		&\overline{P(\phi_2(P(u),v))}=PD(P(u)v)  <_{\zdp} D(P(u)P(v)),
	\end{eqnarray*}
we obtain
	$$(f,g)_{t}^{\mu,\nu} \equiv 0 \;\mod(S_{\Phi_{\rm I}}(Z), t).$$
	This completes the proof of type I case. 
	
The cases of type II and type III are similarly verified. 
\end{proof}

\section{Free noncommutative \pdrbas}
\mlabel{s:explicit}
In this section, we apply the Gr\"obner-Shirshov bases for \pdrbas to completely determine the free noncommutative \pdrbas.
Of the three types of \pdrbas given in Definition~\mref{de:pdrbtype} we will treat types I and II together. Then the approach with be modified to treat type III.

\subsection{A linear basis of free \pdrbas}
Let $\rmT\in \{\rmI,\rmII,\rmIII\}$ and $\Omega=\{D,P\}$. Let 
\begin{equation}
\eta\!: \bfk\frakSO(Z) \to \bfk\frakSO(Z)/\Id(S_{\Phi_{T}}(Z)),
\mlabel{eq:eta}
\end{equation} 
be the canonical quotient homomorphism of operated algebras. In the quotient operated algebra $\bfk\frakSO(Z)/\Id(S_{\Phi_{\rmT}}(Z))$, 
define the quotient operations 
$$\overline{D}:= D \mod\Id(S_{\Phi_{T}}(Z)), \quad \overline{P}:= P \mod\Id(S_{\Phi_{T}}(Z)),$$ 
and let $\bar{\ast}$ be the quotient modulo $\Id(S_{\Phi_{T}}(Z))$ of the concatenation on $\bfk\frakSO(Z)$. By Theorem~\mref{thm:quotient}, we obtain
\begin{theorem}
Let $T\in \{\rmI, \rmII, \rmIII\}$. With notations as above, $(\bfk\frakSO(Z)/\Id(S_{\Phi_{T}}(Z)), \bar{\ast}, \overline{D}, \overline{P})$  is the free  \pdrba of type $T$.  
\mlabel{thm:free-pdrba}
\end{theorem}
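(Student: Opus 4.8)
The plan is to derive this as a direct consequence of Theorem~\mref{thm:quotient}, the only substantive step being to recognize that a \pdrba of type $\rmT$ is exactly a $\Phi_\rmT$-algebra in the sense of Definition~\mref{de:pio}. First I would record that the three OPIs comprising $\Phi_\rmT$ translate verbatim into the three defining axioms of a type-$\rmT$ \pdrba: the identity $\phi_{\rmT,1}$ is the Rota-Baxter relation (of weight $\lambda$) for $P$, the identity $\phi_{\rmT,2}$ is the weight-$\lambda$ Leibniz rule making $D$ a differential operator, and $\phi_{\rmT,3}$ is the commutation constraint $DP-PD-bP$ of Eq.~\meqref{eq:maincase}, with the pair $(\lambda,b)$ specialized according to Definition~\mref{de:pdrbtype}: $(\lambda=0,b=0)$ for $\rmT=\rmI$, $(\lambda\neq0,b=0)$ for $\rmT=\rmII$, and $(\lambda=0,b\neq0)$ for $\rmT=\rmIII$. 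Consequently an operated algebra $(R,\{D,P\})$ satisfies $\phi(r_1,\dots,r_n)=0$ for all $\phi\in\Phi_\rmT$ and all $r_i\in R$ if and only if $(R,D,P)$ is a type-$\rmT$ \pdrba; that is, the category of $\Phi_\rmT$-algebras coincides with the category of type-$\rmT$ \pdrbas.

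Granting this identification, the theorem follows at once. Theorem~\mref{thm:quotient} asserts that $\bfk\frakSO(Z)/\Id(S_{\Phi_\rmT}(Z))$ is the free $\Phi_\rmT$-algebra on $Z$, and the induced operations $\bar{\ast}$, $\overline{D}$, $\overline{P}$ are well defined precisely because $\Id(S_{\Phi_\rmT}(Z))$ is an operated ideal, hence closed under $D$, under $P$, and under two-sided multiplication, so that the quotient is again an operated algebra. Reading ``free $\Phi_\rmT$-algebra'' through the equivalence of the previous paragraph then yields ``free type-$\rmT$ \pdrba on $Z$,'' which is exactly the assertion.

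I expect no genuine obstacle at the level of this statement: freeness here is formal once the OPIs are matched to the axioms, and the delicate work has already been done in Theorem~\mref{thm:phigsb}, which shows that $S_{\Phi_\rmT}(Z)$ is a Gr\"obner-Shirshov basis for $\leq_{\zdp}$. The only care required is bookkeeping---threading the weight conventions correctly through $\phi_{\rmT,1},\phi_{\rmT,2},\phi_{\rmT,3}$ for each $\rmT\in\{\rmI,\rmII,\rmIII\}$, and noting that the condition on the unit in the definition of a weight-$\lambda$ differential operator is automatic in the nonunital operated-semigroup model underlying $\frakSO(Z)$. The real payoff of having the Gr\"obner-Shirshov basis is not this abstract freeness but its refinement: combining Theorem~\mref{thm:phigsb} with the Composition-Diamond Lemma (Theorem~\mref{thm:CDL}) upgrades the quotient to an explicit linear basis $\Irr(S_{\Phi_\rmT}(Z))$, which is what the subsequent explicit construction of free \pdrbas exploits.
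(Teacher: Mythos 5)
Your proposal is correct and follows exactly the route the paper takes: the paper derives Theorem~\ref{thm:free-pdrba} directly from Theorem~\ref{thm:quotient} by identifying type-$\rmT$ \pdrbas with $\Phi_\rmT$-algebras, which is precisely your argument (the paper merely leaves the identification implicit, while you spell it out). Your closing observation is also accurate: the Gr\"obner-Shirshov basis of Theorem~\ref{thm:phigsb} is not needed for this abstract freeness statement, only for the subsequent linear basis via the Composition-Diamond Lemma.
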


The Gr\"obner-Shirshov bases for $\Id(S_{\Phi_{T}}(Z))$ obtained in Theorem~\mref{thm:phigsb} gives a linear basis for this free \pdrbas as follows.

\begin{theorem}
Fix $\rmT\in \{\rmI, \rmII, \rmIII\}$. Let
\begin{equation}\Irr(S_{\Phi_{\rmT}}(Z)):=\frakSO(Z)\backslash\{q_1|_{P(u)P(v)},\;q_2|_{D(uv)},\;q_3|_{DP(u)}\,|\,u,v\in \frakSO(Z),\,q_i\in\frakSO^\star(Z), \,1\leq i\leq 3\}.
\mlabel{eq:irrs}
\end{equation}
Then $\eta\big(\Irr(S_{\Phi_{T}}(Z))\big)$ is a linear basis of $\bfk\frakSO(Z)/\Id(S_{\Phi_{T}}(Z))$.
\mlabel{thm:freepara}
\end{theorem}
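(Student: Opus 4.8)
The plan is to deduce the statement directly from the Composition-Diamond Lemma (Theorem~\ref{thm:CDL}) together with the Gr\"obner-Shirshov property established in Theorem~\ref{thm:phigsb}. Since Theorem~\ref{thm:phigsb} asserts that $S_{\Phi_\rmT}(Z)$ is a Gr\"obner-Shirshov basis with respect to $\leq_{\zdp}$, the equivalence (i)~$\Leftrightarrow$~(iii) of Theorem~\ref{thm:CDL} immediately yields that $\eta(\Irr(S_{\Phi_\rmT}(Z)))$ is a $\bfk$-basis of $\bfk\frakSO(Z)/\Id(S_{\Phi_\rmT}(Z))$, \emph{provided} the abstract set $\Irr(S_{\Phi_\rmT}(Z))$ occurring in Theorem~\ref{thm:CDL} coincides with the explicit set displayed in Eq.~\eqref{eq:irrs}. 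Thus the only real content of the proof is to identify the leading bracketed monomials of the three families of defining relations.

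Concretely, I would first show, for all $u,v\in\frakSO(Z)$, that
$$\overline{\phi_{\rmT,1}(u,v)}=P(u)P(v),\quad \overline{\phi_{\rmT,2}(u,v)}=D(uv),\quad \overline{\phi_{\rmT,3}(u)}=DP(u),$$
the bar denoting the leading monomial under $\leq_{\zdp}$. Each is a short comparison against the remaining monomials of the corresponding relation, carried out by descending through the lexicographic cascade in Eq.~\eqref{eq:dp}. For $\phi_{\rmT,2}$ the decisive invariant is $\deg_{\ed}$: the terms share the same $\deg_Z$, while the outer bracket in $D(uv)$ swallows $\deg_Z(u)+\deg_Z(v)$ letters, giving $\deg_{\ed}(D(uv))>\deg_{\ed}(D(u)v),\,\deg_{\ed}(uD(v))$ and, in type~II, $\deg_{\ed}(D(u)D(v))$, exactly as in the example following Definition~\ref{def:preorder}. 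For $\phi_{\rmT,1}$ the invariants $\deg_Z,\deg_{\ed},\deg_D$ all agree, and $\deg_{\rm gp}$ separates the terms: the two top-level $P$-brackets of $P(u)P(v)$ contribute $\deg_Z(u)+\deg_Z(v)$, strictly more than the contributions of $P(uP(v))$, $P(P(u)v)$ and $P(uv)$, again matching the example after Definition~\ref{def:preorder}. For $\phi_{\rmT,3}$ the terms $DP(u)$ and $PD(u)$ agree through $\deg_Z,\deg_{\ed},\deg_D,\deg_{\rm gp},\deg_P$, so the decision is pushed to $\leq_{\dlex}$, where the rule in Eq.~\eqref{eq:uvw} that $P$-brackets precede $D$-brackets yields $PD(u)<_{\dlex}DP(u)$; the extra term $bP(u)$ of type~III is already beaten on $\deg_{\ed}$ or $\deg_D$.

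With these leading monomials in hand, the abstract definition
$$\Irr(S_{\Phi_\rmT}(Z))=\frakSO(Z)\setminus\{q|_{\bar{s}}\mid s\in S_{\Phi_\rmT}(Z),\, q\in\frakSO^\star(Z)\}$$
specializes, as $s$ ranges over the three families and $u,v$ over $\frakSO(Z)$, to precisely the set in Eq.~\eqref{eq:irrs}. Invoking the equivalence (i)~$\Leftrightarrow$~(iii) of Theorem~\ref{thm:CDL} then completes the proof.

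I expect the only genuine work to lie in the leading-monomial computation, and within it the type~III relation $\phi_{\rmT,3}$ to be the subtle point: unlike the other two cases, $DP(u)$ and $PD(u)$ are indistinguishable by every numerical invariant in the cascade and are separated solely at the final $\leq_{\dlex}$ stage, so one must check that the recursive orientation ``$P$ before $D$'' of Eq.~\eqref{eq:uvw} indeed survives to the relevant filtration level $\frakS_{\Omega,n}(Z)$.
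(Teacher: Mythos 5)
Your proposal is correct and follows essentially the same route as the paper: invoke Theorem~\ref{thm:phigsb} to get the Gr\"obner--Shirshov property, identify the leading monomials of the three defining relations as $P(u)P(v)$, $D(uv)$ and $DP(u)$, and conclude via the equivalence of (i) and (iii) in the Composition--Diamond Lemma (Theorem~\ref{thm:CDL}). The only difference is that you actually carry out the leading-monomial comparisons through the cascade of Eq.~\eqref{eq:dp} (correctly, including the $\leq_{\dlex}$ tie-break for $\phi_{\rmT,3}$), whereas the paper merely asserts them.
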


\begin{proof}
Let $\rmT\in \{\rmI, \rmII, \rmIII\}$. By Theorems\,\mref{thm:phigsb}, $S_{\Phi_\rmT(Z)}$ is a Gr\"obner-Shirshov basis in $\bfk\frakSO(Z)$. Furthermore, the leading terms of $\phi_{\rmT,1}(u, v)$,  $\phi_{\rmT,2}(u, v)$ and $\phi_{\rmT,3}(u)$ in $S_{\Phi_{\rmT}}(Z)$ from Eqs.\,\meqref{eq:phi1} -- \meqref{eq:phi3} under $\leq_{\zdp}$ are
	$ P(u)P(v),  D(uv)$ and $D(P(u))$,  respectively.
	Therefore, by Theorems~\mref{thm:CDL},  $\eta\big(\Irr(S_{\Phi_{\rmT}}(Z))\big)$ is a linear basis of $\bfk\frakSO(Z)/\Id(S_{\Phi_{\rmT}}(Z))$ .
\end{proof}

We next make the linear basis $\Irr(S_{\Phi_{\rmT}}(Z))$ explicit by identifying it with a subset of $S_{\Phi_{\rmT}}(Z)$ obtained in the previous study of differential Rota-Baxter algebras~\mcite{GK3,GS}.
For this purpose, we first recall some notations from~\mcite{Gub,GK3,GS}. 
To be consistent with the previous literature and to avoid later conflict of notations, we will use $X$ instead of $Z$ as the generating set of the free operated algebra $\frakSO(X)$. 

For any subsets $Y$ and $Z$ of $\frakSO(X)$, we define the \textit{alternating product } of $Y$ and $Z$ by 
$$ \Lambda(Y,Z):=\left(   \bigcup_{r\geq1}(YP(Z))^{r}\right)\bigcup \left(\bigcup_{r\geq0}(YP(Z))^{r}Y\right)\bigcup \left(   \bigcup_{r\geq1}(P(Z)Y)^{r}\right)\bigcup \left(\bigcup_{r\geq0}(P(Z)Y)^{r}P(Z)\right).$$

We then construct a sequence of sets by the following recursion.
Firstly, define $$\mathfrak{X}_0:= S(\Delta(X)).$$
for $\Delta(X):= \{ D^n(x)\, \big|\, x\in X, n\geq 0\}.$
In general, for $n\geq1$, we define
\begin{equation}
\mathfrak{X}_n:=\Lambda(\mathfrak{X}_0,\mathfrak{X}_{n-1}).
\mlabel{eq:xn}
\end{equation}
So $\mathfrak{X}_n$ can be explicitly expressed as a disjoint union as follows.
{\small
\begin{align*}\mathfrak{X}_n &=\left(   \bigcup_{r\geq1}(\mathfrak{X}_0P(\mathfrak{X}_{n-1}))^{r}\right)\bigcup \left(\bigcup_{r\geq0}(\mathfrak{X}_0P(\mathfrak{X}_{n-1}))^{r}\mathfrak{X}_0\right)
\bigcup \left(   \bigcup_{r\geq1}(P(\mathfrak{X}_{n-1})\mathfrak{X}_0)^{r}\right)\bigcup \left(\bigcup_{r\geq0}(P(\mathfrak{X}_{n-1})\mathfrak{X}_0)^{r}P(\mathfrak{X}_{n-1})\right).
\end{align*}
}
By Eq.~\meqref{eq:xn}, we obtain $ \mathfrak{X}_0\subseteq \mathfrak{X}_1$.
For a given $k\geq 1$, assume that  $\mathfrak{X}_{n-1}\subseteq \mathfrak{X}_n$ for all $n\leq k$, and so this leads to
$$ \frakX_k=\Lambda (\mathfrak{X}_0, \mathfrak{X}_{k-1}) \subseteq \Lambda (\mathfrak{X}_0, \mathfrak{X}_k)=\frakX_{k+1}.$$
We then get a direct system. Then by taking the direct limit, we obtain a set

\begin{equation} \mathfrak{X}_{\infty}=\bigcup_{n\geq0}\mathfrak{X}_n= \lim_{\longrightarrow}\mathfrak{X}_n.
\mlabel{eq:xinf}
\end{equation}
Elements in $\mathfrak{X}_{\infty}$ are called {\bf differential Rota-Baxter bracketed words}, or simply {\bf DRBWs} \mcite{GS}.

For every DRBW $u\in \fx$,  $u$ has a unique {\bf standard decomposition}
\begin{equation}\mlabel{eq:deco}
u=u_1\cdots u_k,
\end{equation}
where each $u_i$ lies alternatively in $\frakX_0$ or in $P(\fx)$ with $1\leq i\leq k$. This is analogous to \cite[Lemma~4.2.5(c)]{Gub}. We call $k$  the {\bf breadth} of $u$, denoted by $\br(u)$.
Also, let
$$
\dep_P(u):=\min\{k\,|\, u\in\frakX_k\},
$$
and call it the {\bf $P$-depth} of $u$. Denote by $\bfk\fx$  the free module on the set $\fx$.

\begin{prop}
\mlabel{p:xinfirr}
	\begin{equation}\mlabel{eq:frakX}
		\frakX_\infty=\Irr(S_{\Phi_{T}}(X)).
	\end{equation}
\end{prop}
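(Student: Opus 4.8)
The plan is to prove the set equality $\fx=\Irr(S_{\Phi_{T}}(X))$ by establishing both inclusions, after first recording a purely structural description of the right-hand side. Note that the set $\Irr(S_{\Phi_{T}}(X))$ of Eq.~\meqref{eq:irrs} does not actually depend on $T$, so a single argument handles all three types. Unwinding the definition, a bracketed word $w\in\frakSO(X)$ lies in $\Irr(S_{\Phi_{T}}(X))$ if and only if it contains none of the three sub-bracketed-words $P(u)P(v)$, $D(uv)$ and $DP(u)$. I would first observe that avoiding $D(uv)$ (with $uv$ of breadth $\geq 2$) together with avoiding $DP(u)$ is equivalent to saying that whenever $D$ is applied to some $t$ inside $w$, the argument $t$ has breadth one and is not a $P$-bracket; iterating this downward through the $D$-nesting shows that every maximal $D$-bracket of $w$ has the form $D^{n}(x)$ with $x\in X$, $n\geq1$, i.e.\ belongs to $\Delta(X)$. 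Thus $w\in\Irr(S_{\Phi_{T}}(X))$ exactly when (i) every $D$-bracket in $w$ is a differential tower on a single variable and (ii) no two $P$-brackets of $w$ are adjacent; I will call a bracketed word satisfying (i) and (ii) \emph{good}.

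For the inclusion $\fx\subseteq\Irr(S_{\Phi_{T}}(X))$, I would induct on $n$ to show every element of $\frakX_n$ is good. The base case $\frakX_0=S(\Delta(X))$ is immediate: such a word is a product $D^{n_1}(x_1)\cdots D^{n_k}(x_k)$, which contains no $P$ and in which every $D$ is a tower on a variable. For the inductive step, $\frakX_n=\Lambda(\frakX_0,\frakX_{n-1})$ consists of alternating products of factors from $\frakX_0$ and from $P(\frakX_{n-1})$. Each $\frakX_0$-factor is differential-only and satisfies (i); each $P$-content lies in $\frakX_{n-1}$ and is good by the induction hypothesis; and since the $\frakX_0$-factors are nonempty and separate consecutive $P$-factors in the alternating product, property (ii) holds globally. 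No product or bracketing operation occurring in $\Lambda$ creates a $D$ applied to a word of breadth $\geq 2$ or to a $P$-bracket, so (i) is preserved. Hence $\frakX_n\subseteq\Irr(S_{\Phi_{T}}(X))$ for all $n$, and so is the union $\fx$.

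For the reverse inclusion $\Irr(S_{\Phi_{T}}(X))\subseteq\fx$, I would induct on the number of $P$-brackets occurring in $w$. Write $w$ in the canonical form $w=v_1\cdots v_k$ of Definition~\mref{def:breadths}, with each $v_i\in X\cup\lc\frakSO(X)\rc_\Omega$. If $w$ has no $P$-bracket, then by (i) each $v_i$ is either a variable or a tower $D^{n}(x)$, hence lies in $\Delta(X)$, so $w\in S(\Delta(X))=\frakX_0\subseteq\fx$. If $w$ contains a $P$-bracket, then by (ii) the $P$-factors are pairwise non-adjacent; grouping each maximal run of non-$P$ factors into a single block, each such block lies in $S(\Delta(X))=\frakX_0$ by (i), and $w$ becomes a genuine alternating product of $\frakX_0$-blocks and $P$-factors, matching one of the four cases in the definition of $\Lambda(\frakX_0,\frakX_{n-1})$. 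Each $P$-content $s_i$ is again good (being a sub-bracketed-word of $w$) with strictly fewer $P$-brackets than $w$, so $s_i\in\fx$ by the induction hypothesis, say $s_i\in\frakX_{m_i}$. Taking $m=\max_i m_i$ and using $\frakX_{m_i}\subseteq\frakX_m$, every $s_i\in\frakX_m$, whence $w\in\Lambda(\frakX_0,\frakX_m)=\frakX_{m+1}\subseteq\fx$.

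I expect the main technical care to be in the reverse inclusion: translating the canonical decomposition of Definition~\mref{def:breadths} into the precise alternating shape demanded by $\Lambda$, in particular checking that property (ii) guarantees no empty $\frakX_0$-block and that the four start/end cases of $\Lambda$ exhaust all possibilities. The other delicate point, used in both directions, is the downward sub-induction on $D$-nesting showing that avoiding $D(uv)$ and $DP(u)$ forces every $D$-bracket to be a pure differential tower $D^{n}(x)$. Once this structural dictionary between $\Irr(S_{\Phi_{T}}(X))$ and the recursive construction of $\fx$ is in place, both inclusions follow by routine inductions.
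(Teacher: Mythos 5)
Your proposal is correct and follows essentially the same route as the paper: both directions are proved by the same structural dictionary (irreducible $=$ every $D$-bracket is a tower $D^n(x)$ and no two $P$-brackets are adjacent, matched against the alternating construction $\Lambda(\frakX_0,\frakX_{n-1})$), with the forward inclusion by induction on $n$ and the reverse inclusion by induction on a $P$-complexity measure. The only difference is that you induct on the number of $P$-occurrences where the paper inducts on the $P$-depth $\dep_P$; your measure is if anything slightly cleaner, since it is manifestly defined on all of $\frakSO(X)$ rather than only on $\frakX_\infty$.
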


\begin{proof}
Let $S_{\phi_{T}}:=S_{\Phi_{T}}(X)$ and let $\Id_{\Phi_T}:=\Id(S_{\Phi_{T}})=\Id(S_{\Phi_{T}}(X))$. Let $\eta\!: \bfk\frakSO(X) \to \bfk\frakSO(X)/\Id_{\Phi_T}$ be the canonical homomorphism for $T\in \{I, II\}$.  Recall from  Eq.~\meqref{eq:irrs} that
\begin{equation*}\Irr(S_{\Phi_{T}}(X)):=\frakSO(X)\backslash\{q_1|_{P(u)P(v)},\;q_2|_{D(uv)},\;q_3|_{DP(u)}\,|\,u,v\in \frakSO(X),\,q_i\in\frakSO^\star(X), \,1\leq i\leq 3\}.
\end{equation*}
For $n\geq 0$, we set
$$I_n:=\{u\in\Irr(S_{\Phi_T})\,|\, \dep_P(u)\leq n\}\subseteq \frakSO(X).$$
Thus, $I_n\subseteq I_{n+1}$ for all $n\geq0$, and 
\begin{equation}\mlabel{eq:in}
\Irr(S_{\Phi_{T}})=\bigcup_{n\geq 0}I_n. 
\end{equation}
By Eq.~\meqref{eq:xn}, we have $\dep_P(u)\leq n$ for all $u\in\frakX_n$, and $u$ doesn't contain $P(x)P(y), D(xy)$ and $DP(x)$ as its subword for $x,y\in \frakSO(X)$. So $\frakX_n\subseteq I_n$.

On the other hand, we prove $I_n\subseteq \frakX_n$ by induction on $n\geq0$. For any $u\in I_0$, we get $\dep_P(u)=0$, and so $u\in \frakS_{\{D\}}(X)$. Then  by Eq.~\meqref{eq:decom0}, $u=v_1v_2\cdots v_k\in I_0$,  where $v_i\in X\cup \lc \frakSO(X)\rc_{\Omega}$ for 
$1\leq i\leq k$ and $\Omega=\{D\}$.   
Since $u$ cannot contain $D(xy)$ as its subword, it follows that $v_i \in X\cup D^n(X)$ for $n\geq 1$. Thus, $u\in S(\Delta(X))=\frakX_0$, and so $I_0\subseteq\frakX_0$. Assume that $I_n\subseteq \frakX_n$ for an $n\geq 0$. Consider $u\in I_{n+1}$. By Eq.~\meqref{eq:decomp}, we write 
\begin{equation*}
u=v_0\lc v_1^\ast\rc_{P} v_1 \cdots \lc v_k^\ast \rc_{P} v_k, \quad v_0, v_1\cdots, v_k\in
M(X)\cup\lc\frakS_{\Omega}(X)\rc_{D}, \quad v_1^\ast,\cdots, v_k^\ast \in \frakSO(X)
\end{equation*}
Assume that $v_{t_0}=\lc \overline{v_{t_0}}\rc_D\in \lc\frakS_{\Omega}(X)\rc_{D}$ for some $t_0\in\{0,\cdots,k\}$. Since $u$ doesn't contain $P(x)P(y), DP(x)$ and $D(xy)$ as its subword for $x,y\in\frakSO(X)$,  we have $\overline{v_{t_0}}\in \Delta(X)$. This shows that $v_i\in M(X)\cup \lc\Delta(X)\rc_D$. For all $v_j^\ast\in \frakSO(X)$ with $j=1,\cdots, k$, we have $\dep_P(v_{j}^\ast)\leq n$, and so $v_j^\ast\in \frakX_n$ by the induction hypothesis. Thus, $\lc v_j^\ast\rc_P\in \lc\frakX_n\rc_P$. This gives $u\in\frakX_{n+1}$, and hence $I_{n+1}\subseteq \frakX_{n+1}$, completing the induction.

Then by Eq.~\meqref{eq:in}, we complete the proof.
\end{proof}

\nc{\NC}{\mathrm{NC}}

Let $\iota:\bfk \frakX_\infty \to \bfk\frakSO(X)$ be the natural inclusion. 
Set $\theta= \eta \circ \iota$.
Let
$$\sha_q^{\NC}(X):=\bfk\fx.$$
 Then we obtain the following commutative diagram.
\begin{equation}
\begin{split}
\xymatrix{
\sha_q^{\NC}(X)
\ar@{^{(}->}[r]^{  \iota}\ar^{\theta}[rd]&(\bfk\frakSO(X),D,P)
\ar@{->}^{\eta}[d]\\
&(\bfk\frakSO(X)/\Id(S_{\Phi_{T}}(X)), \bar{\ast}, \overline{D}, \overline{P})
}\end{split}
\end{equation}

By Proposition~\mref{p:xinfirr}, we obtain
\begin{lemma}
\mlabel{lem:theat-dpx}
With  notations as above,  we have the linear isomorphism
\begin{equation}
	\theta: \bfk \frakX_{\infty} \xrightarrow{\iota}  \bfk\frakSO(X) \xrightarrow{\eta} \bfk\frakSO(X)/\Id(S_{\Phi_{T}}(X)).
	\mlabel{eq:theta}
\end{equation}
\end{lemma}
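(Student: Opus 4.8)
The plan is to obtain the isomorphism with essentially no new computation, by chaining together the two structural facts that precede the statement. The key inputs are the set-theoretic identity $\frakX_\infty = \Irr(S_{\Phi_{T}}(X))$ from Proposition~\mref{p:xinfirr} and the fact, recorded in Theorem~\mref{thm:phigsb}, that $S_{\Phi_{T}}(X)$ is a Gr\"obner-Shirshov basis in $\bfk\frakSO(X)$ with respect to $\leq_{\zdp}$.

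First I would note that, since Proposition~\mref{p:xinfirr} gives $\frakX_\infty = \Irr(S_{\Phi_{T}}(X))$ as equal subsets of $\frakSO(X)$, the free modules on these sets coincide: $\bfk\frakX_\infty = \bfk\,\Irr(S_{\Phi_{T}}(X))$ inside $\bfk\frakSO(X)$, and $\iota$ is precisely the inclusion of this submodule. Next, because $S_{\Phi_{T}}(X)$ is a Gr\"obner-Shirshov basis, I would apply the Composition-Diamond Lemma (Theorem~\mref{thm:CDL}), in particular statement~(\mref{it:cd3}), to get the direct sum decomposition
\[
\bfk\frakSO(X) = \bfk\,\Irr(S_{\Phi_{T}}(X)) \oplus \Id(S_{\Phi_{T}}(X)),
\]
together with the fact that $\eta\big(\Irr(S_{\Phi_{T}}(X))\big)$ is a $\bfk$-basis of the quotient $\bfk\frakSO(X)/\Id(S_{\Phi_{T}}(X))$ (this is also the content of Theorem~\mref{thm:freepara}).

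The conclusion then follows at once. Identifying $\Id(S_{\Phi_{T}}(X))$ with $\ker\eta$, the decomposition above exhibits $\bfk\frakX_\infty = \bfk\,\Irr(S_{\Phi_{T}}(X))$ as a complement of $\ker\eta$, so $\theta = \eta\circ\iota$, being the restriction of $\eta$ to this complement, is both injective (its kernel is $\bfk\frakX_\infty\cap\ker\eta = 0$) and surjective (every coset has a representative in $\bfk\,\Irr(S_{\Phi_{T}}(X))$). Equivalently, $\theta$ sends the basis $\frakX_\infty$ of $\bfk\frakX_\infty$ bijectively onto the basis $\eta\big(\Irr(S_{\Phi_{T}}(X))\big)$ of the quotient, and a linear map carrying a basis bijectively onto a basis is an isomorphism. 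The heavy lifting having already been done in Proposition~\mref{p:xinfirr} and Theorem~\mref{thm:phigsb}, I do not expect a genuine obstacle here; the only point needing care is the bookkeeping change of generating set from $Z$ to $X$, which is harmless since all the cited results hold for an arbitrary generating set, and one must simply apply Theorem~\mref{thm:CDL}(\mref{it:cd3}) with $Z$ replaced by $X$, consistent with Theorem~\mref{thm:freepara}.
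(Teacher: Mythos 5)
Your proposal is correct and follows essentially the same route as the paper, which derives the lemma directly from Proposition~\ref{p:xinfirr} combined with Theorem~\ref{thm:freepara} (itself a consequence of Theorem~\ref{thm:phigsb} and the Composition-Diamond Lemma~\ref{thm:CDL}(\ref{it:cd3})). You have merely spelled out the chain of citations that the paper leaves implicit; no further comment is needed.
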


\subsection{Explicit constructions of \pdrbas of types I and II}

We can use the linear bijection $\theta$ in Eq.\,\meqref{eq:theta} to transport the multiplication, differential operator and Rota-Baxter operator on the free \pdrba $(\bfk\frakSO(X)/\Id(S_{\Phi_{T}}(X)), \bar{\ast}, \overline{D}, \overline{P})$ to $\sha^{\NC}(X):=\bfk \frakX_\infty$, arriving at the isomorphism 
\begin{equation}
\Big(\bfk\frakSO(X)/\Id(S_{\Phi_{T}}(X)), \bar{\ast}, \overline{D}, \overline{P}\Big) \cong 	
\Big(\sha^{\NC}(X),\theta^{-1}\bar{\ast}(\theta\ot \theta), \theta^{-1}D\theta, \theta^{-1}P\theta\Big)
\mlabel{eq:freeiso}
\end{equation} 
of free \pdrbas on $X$. Here $\bar{\ast}$ is the multiplication on $\bfk\frakSO(X)/\Id(S_{\Phi_{\rmT}}(X))$ induced by the concatenation on $\bfk\frakSO(X)$ in Theorem~\mref{thm:free-pdrba} and 
$$\theta^{-1}\bar{\ast}(\theta\ot \theta)(u\ot v)=
\theta^{-1}(\theta(u) \bar{\ast} \theta(v)).$$

Our goal is to give explicit descriptions of the operations $\theta^{-1}(\bar{\ast}(\theta\ot \theta)), \theta^{-1}D\theta, \theta^{-1}P\theta$, 
 thereby obtain an explicit construction of free \pdrbas. 

We first describe the transported Rota-Baxter operator $\theta^{-1}\overline{P}\theta$. 

Note that if $u$ is in $\fx$, then $P(u)$ is still in $\fx$. Thus we have a linear map 
\begin{equation}
\pfx: \bfk\fx\to \bfk \fx, \quad \pfx(u):=P(u).
\mlabel{eq:rbox}
\end{equation}

\begin{lemma}\mlabel{lem:qpx} We have 
	\begin{equation}\mlabel{eq:theta-pfx}
		\pfx= \theta^{-1}\overline{P} \theta.
	\end{equation}
\end{lemma}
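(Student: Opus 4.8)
The plan is to verify the claimed identity directly on the bracketed-word basis $\frakX_\infty$. Since $\theta$ is a linear isomorphism by Lemma~\mref{lem:theat-dpx}, the equation $\pfx=\theta^{-1}\overline{P}\theta$ is equivalent to the commutation relation $\theta\circ\pfx=\overline{P}\circ\theta$, which is cleaner to check. As all the maps $\pfx,\theta,\overline{P},\eta,\iota$ are $\bfk$-linear and $\frakX_\infty$ is a $\bfk$-basis of $\bfk\frakX_\infty$, it suffices to prove $\theta(\pfx(u))=\overline{P}(\theta(u))$ for each single DRBW $u\in\frakX_\infty$. First I would record the closure property already noted before the statement: for $u\in\frakX_\infty$ the element $P(u)=\lc u\rc_P$ again lies in $\frakX_\infty$. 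This is read off from the recursion $\frakX_n=\Lambda(\frakX_0,\frakX_{n-1})$, since $P(\frakX_{n-1})$ occurs as the $r=0$ term of the last block of $\Lambda$, giving $P(\frakX_{n-1})\subseteq\frakX_n$. Hence $\pfx(u)=P(u)$ is a well-defined basis element of $\bfk\frakX_\infty$, and under $\iota$ it is carried to the identical bracketed word, i.e. $\iota(\pfx(u))=\lc\iota(u)\rc_P$.

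Next I would compute the two sides and see that they coincide. On the one hand, using $P(u)\in\frakX_\infty$ and $\theta=\eta\circ\iota$,
$$\theta(\pfx(u))=\eta\big(\iota(P(u))\big)=\eta(P(u)).$$
On the other hand, $\overline{P}$ is by construction $P\ \mathrm{mod}\ \Id(S_{\Phi_T}(X))$, which is to say that the quotient map $\eta$ is a morphism of operated algebras and therefore intertwines $P$ and $\overline{P}$, namely $\eta\circ P=\overline{P}\circ\eta$. Consequently
$$\overline{P}(\theta(u))=\overline{P}\big(\eta(\iota(u))\big)=\overline{P}(\eta(u))=\eta(P(u)).$$
Comparing the two displays, both equal $\eta(P(u))$, so $\theta\circ\pfx=\overline{P}\circ\theta$, and applying $\theta^{-1}$ yields the lemma.

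This argument is in essence a diagram chase through the triangle $\theta=\eta\circ\iota$, so I do not anticipate a serious obstacle. The one point deserving explicit care—and the place where the earlier bookkeeping is actually used—is the compatibility $\iota(\pfx(u))=\lc\iota(u)\rc_P$: the inclusion $\iota$ is only a map of underlying modules and is \emph{not} a priori an operated-algebra morphism, so one must confirm that first applying $\pfx$ inside $\bfk\frakX_\infty$ and then including agrees with first including into $\bfk\frakSO(X)$ and then applying the bracket $\lc\ \rc_P$. This holds precisely because the two operations produce the literally same word, together with the closure $P(\frakX_\infty)\subseteq\frakX_\infty$ recorded above, which guarantees $\pfx$ takes values in $\bfk\frakX_\infty$ so that the left-hand composite makes sense.
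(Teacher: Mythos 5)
Your argument is correct and is essentially the paper's own proof: both reduce to the same diagram chase through $\theta=\eta\circ\iota$, using the operated-morphism identity $\eta P=\overline{P}\eta$ together with the facts that $\iota$ acts as the identity on words and that $P(\frakX_\infty)\subseteq\frakX_\infty$. Rewriting the claim as $\theta\circ\pfx=\overline{P}\circ\theta$ before chasing is only a cosmetic rearrangement of the paper's chain of equalities.
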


\begin{proof}
	Since $\overline{P}\eta= \eta P$ and $\iota(u)=u$ for all $u\in\frakX_\infty$, we have 
	$$(\theta^{-1}\overline{P} \theta)(u)=(\theta^{-1}\overline{P} \eta \iota)(u)=\theta^{-1}\overline{P} \eta (u)=\theta^{-1}\eta (P  (u))=\theta^{-1}\eta \iota (P  (u))=\theta^{-1} \theta (P  (u))
	=\pfx(u).  \qedhere $$
\end{proof}

We next define a binary operation $\shpr$ on $\sha_q^{\rm NC}(X)$ as the transported product of $\bar{\ast}$. We use the following recursion in analog to that for the product of two Rota-Baxter words in the free Rota-Baxter algebra on a set. See~\cite[Theorem~4.2.16]{Gub} for more details.

Consider basis elements  $u,v\in\fx$. Using the standard decomposition from Eq.~\eqref{eq:deco}, write 
$$
u=u_1\cdots u_m,\quad v=v_1\cdots v_n,
$$
with $\br(u)=m\geq 1$ and $\br(v)=n\geq 1$. We define $u\shpr v$ by induction on the sum  $\ell:=\dep_P(u)+\dep_P(v)\geq 0$.
For the base case $\ell=0$, we have $\dep_P(u)=\dep_P(v)=0$, and so $u,v$ are in $\frakX_{0}=S(\Delta(X))$. In this case, we define $u\shpr  v=uv$, the concatenation product.

For a given $k\geq 0$, assume that $u \shpr  v$ has been defined for all $u,v\in \frakX_{\infty}$ with $0\le \ell\le k$. Consider $u, v\in \frakX_\infty$ with $\ell=\dep_P(u)+\dep_P(v)=k+1$. We distinguish two cases, depending on whether or not $\br(u)=\br(v)=1$.

\noindent
{\bf Case 1. }
Let $\br(u)=\br(v)=1$, that is, $u$ and $v$ are in $\frakX_0$ or in $P(\frakX_{\infty})$. Then
\begin{equation}\mlabel{eq:de}
	u \shpr  v = \left \{\begin{array}{ll}
		P(\bar{u} \shpr  P(\bar{v}))+P(P(\bar{u})\shpr  \bar{v})+\lambda P(\bar{u} \shpr  \bar{v}),
		& \text{if} \ u=P(\bar{u})\  \text{and} \ v=P(\bar{v}),\\
		u v, & \text{otherwise}.
	\end{array} \right .
\end{equation}
Here the product in the first line is well-defined by the induction hypothesis, since
$$\dep_P(\bar{u})+\dep_P(P(\bar{v}))=\dep_P(P(\bar{u}))+\dep_P(\bar{v})=k\quad\text{and}\quad \dep_P(\bar{u})+\dep_P(\bar{v})=k-1.$$
The product in the second line is the concatenation.

\noindent
{\bf Case 2. }
Let $\br(u)>1$ or $\br(v)>1$. Then define
\begin{equation}\mlabel{eq:dd}
	u \shpr  v=u_1\cdots u_{m-1}(u_m\shpr v_1)v_2\cdots v_{n},
\end{equation}
where $w_m \shpr  v_1$ has been  defined by Eq.~(\mref{eq:de}) and the remaining products are the concatenation. Extending $\shpr $ by bilinearity, we obtain  a binary operation $\shpr $ on $\bfk\fx$. 

\begin{lemma}
\mlabel{lem:thetaprod}	
For the product $\shpr$ defined above, we have
\begin{equation}
\mlabel{eq:thetaprod}
	u \shpr v= \theta^{-1}\big(\theta(u) \bar{\ast} \theta(v)\big), \quad u, v\in \fx.
\end{equation}
\end{lemma}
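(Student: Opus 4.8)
The plan is to reduce the stated identity to a single congruence modulo the operated ideal, and then to verify that congruence by an induction mirroring the recursive definition of $\shpr$. Since $\theta=\eta\circ\iota$ with $\iota$ the inclusion, for any $u\in\fx$ we have $\theta(u)=\eta(u)$; and since $\bar{\ast}$ is induced by the concatenation product of $\bfk\frakSO(X)$ and $\eta$ is an algebra homomorphism, we get $\theta(u)\bar{\ast}\theta(v)=\eta(u)\bar{\ast}\eta(v)=\eta(uv)$, where $uv$ denotes the concatenation in $\bfk\frakSO(X)$. By construction the recursion keeps $u\shpr v$ inside $\bfk\fx$ (using $P(\fx)\subseteq\fx$ noted before Lemma~\mref{lem:qpx}), so by the injectivity of $\theta$ from Lemma~\mref{lem:theat-dpx}, Eq.~\meqref{eq:thetaprod} is equivalent to
\begin{equation*}
\eta(u\shpr v)=\eta(uv), \quad\text{i.e.}\quad u\shpr v-uv\in\Id(S_{\Phi_{T}}(X)).
\end{equation*}
Thus it suffices to show that the element produced by the recursion is congruent to the plain concatenation modulo the Gr\"obner--Shirshov basis.

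I would argue by induction on $\ell=\dep_P(u)+\dep_P(v)$, with a subsidiary induction on the total breadth $\br(u)+\br(v)$, following exactly the case division used to define $\shpr$. The base case $\ell=0$ is immediate, since $u\shpr v=uv$ by definition. In the ``otherwise'' branch of Case~1 and in all of Case~2, the defining formula differs from the concatenation only by replacing the concatenation $u_mv_1$ of a single breadth-one factor by $u_m\shpr v_1$; since $\eta$ is an algebra homomorphism, it distributes over the surrounding concatenations, and the identity follows once $\eta(u_m\shpr v_1)=\eta(u_mv_1)$ is known, which is supplied by the subsidiary breadth induction. In particular these branches contribute nothing beyond the multiplicativity of $\eta$.

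The crux is the first branch of Case~1, where $u=P(\bar{u})$ and $v=P(\bar{v})$ and the recursion sets $u\shpr v=P(\bar{u}\shpr P(\bar{v}))+P(P(\bar{u})\shpr\bar{v})+\lambda P(\bar{u}\shpr\bar{v})$. Here I would invoke the Rota--Baxter defining relation $\phi_{\rmT,1}$, which lies in $S_{\Phi_{\rmT}}(X)$, to obtain the reduction
\begin{equation*}
P(\bar{u})P(\bar{v})\equiv P(\bar{u}P(\bar{v}))+P(P(\bar{u})\bar{v})+\lambda P(\bar{u}\bar{v})\pmod{\Id(S_{\Phi_{\rmT}}(X))},
\end{equation*}
with $\lambda=0$ for types I and III. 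Applying $\eta$ to the recursion formula, using $\eta\circ P=\overline{P}\circ\eta$ (as in the proof of Lemma~\mref{lem:qpx}), and feeding in the induction hypothesis for the three inner products $\bar{u}\shpr P(\bar{v})$, $P(\bar{u})\shpr\bar{v}$, $\bar{u}\shpr\bar{v}$, whose $\ell$-values are $k,k,k-1$ respectively and hence strictly smaller than the current $\ell=k+1$ (as recorded below Eq.~\meqref{eq:de}), turns the right-hand side into $\eta(P(\bar{u}P(\bar{v})))+\eta(P(P(\bar{u})\bar{v}))+\lambda\eta(P(\bar{u}\bar{v}))$, which by the displayed reduction equals $\eta(P(\bar{u})P(\bar{v}))=\eta(uv)$. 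The main obstacle is precisely the bookkeeping at this step: one must confirm that the three inner products have strictly smaller $\ell$ so the induction hypothesis legitimately applies, and that the recursion Eq.~\meqref{eq:de} matches the Rota--Baxter reduction term by term after applying $\eta$. The remaining branches are then purely formal consequences of the multiplicativity and operator-compatibility of $\eta$.
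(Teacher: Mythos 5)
Your proposal is correct and follows essentially the same route as the paper's proof: induction on $\ell=\dep_P(u)+\dep_P(v)$ with the same case division as the definition of $\shpr$, multiplicativity of $\eta$ handling the concatenation branches, and the Rota--Baxter relation in the quotient (equivalently, $\phi_{\rmT,1}\in\Id(S_{\Phi_{\rmT}}(X))$) handling the $P(\bar u)\,P(\bar v)$ branch via the induction hypothesis on the three inner products. Your explicit reformulation as the congruence $u\shpr v\equiv uv \pmod{\Id(S_{\Phi_{T}}(X))}$, justified by the injectivity of $\theta$, is only a cosmetic repackaging of the paper's direct verification that $\theta(u)\bar{\ast}\theta(v)=\theta(u\shpr v)$.
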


\begin{proof}
Use induction on the sum $\ell:=\dep_P(u)+\dep_P(v)\geq 0$.	 If $\ell=0$, then $u,v\in S(\Delta(X))\subseteq\frakX_\infty$, and so
$u\shpr v=uv$. Since $\iota (a)=a$ for all $a\in\frakX_\infty$, we have 
$$\theta(u)\bar{\ast} \theta(v)=\eta(u)\bar{\ast} \eta(v)=\eta(uv)=\theta(uv).$$
Hence $uv=\theta^{-1}(\theta(u)\bar{\ast}\theta(v))$. Thus, Eq.~\meqref{eq:thetaprod} holds in this case.
Now suppose Eq.~\meqref{eq:thetaprod} holds for all $u,v\in\frakX_\infty$ with $\ell\leq k$. Consider $u,v\in\frakX_\infty$ with $\ell=k+1$. There are two cases to examine. 
\noindent

\smallskip
{\bf Case 1.}
Let $\br(u)=\br(v)=1$. Then either $u=P(\bar{u}), v=P(\bar{v})$ for some $\bar{u},\, \bar{v}\in\frakX_\infty$, or one of $u, v$ belongs to $S(\Delta(X))$. For the former case, we have
\begin{align*}\theta(u)\bar{\ast}\theta(v)&=\eta(P(\bar{u}))\bar{\ast}\eta(P(\bar{v}))\\
&=\overline{P}(\eta(\bar{u}))\bar{\ast}\overline{P}(\eta(\bar{v}))\quad(\text{by $\eta  P=\overline{P} \eta$})\\
&=\overline{P}\Big(\eta(\bar{u})\bar{\ast}\overline{P}(\eta(\bar{v}))+\overline{P}(\eta(\bar{u}))\bar{\ast}\eta(\bar{v})+\lambda\eta(\bar{u})\bar{\ast}\eta(\bar{v})\Big)\\
&=\overline{P}\Big(\theta(\bar{u})\bar{\ast}\theta(P(\bar{v}))+\theta(P(\bar{u}))\bar{\ast}\theta(\bar{v})+\lambda\theta(\bar{u})\bar{\ast}\theta(\bar{v})\Big)\\
&=\overline{P}\Big(\theta(\bar{u}\shpr P(\bar{v}))+\theta(P(\bar{u})\shpr \bar{v})+\lambda\theta(\bar{u}\shpr \bar{v})\Big)\quad(\text{by the induction hypothesis})\\
&=\overline{P}\Big(\eta(\bar{u}\shpr P(\bar{v}))+\eta(P(\bar{u})\shpr \bar{v})+\lambda\eta(\bar{u}\shpr \bar{v})\Big)\\
&=\eta\Big(P\big(\bar{u}\shpr P(\bar{v})+P(\bar{u})\shpr \bar{v}+\lambda\bar{u}\shpr \bar{v}\big)\Big)\\
&=\eta(u\shpr v)\quad(\text{by Eq.~\meqref{eq:de}})\\
&=\theta(u\shpr v)\quad(\text{by $\iota(u\shpr v)=u\shpr v\in\bfk\frakX_\infty$}). 
\end{align*}
For the latter case, we obtain $uv\in\frakX_\infty$, and hence $\iota(uv)=uv$. Then 
$$\theta(u)\bar{\ast}\theta(v)=\eta(u)\bar{\ast}\eta(v)=\eta(uv)=\theta(uv).$$
By Eq.~\meqref{eq:de} again,  $u\shpr v=uv$, and so Eq.~\meqref{eq:thetaprod} holds.
\noindent

\smallskip
{\bf Case 2.} Suppose $\br(u)\geq 2$ or $\br(v)\geq 2$.  Write $u$ and $v$ in their standard decompositions:
$$u=u_1\cdots u_m,\quad v=v_1\cdots v_n,$$
where each $u_i$, $1\leq i\leq m$, and each $v_j$, $1\leq j\leq n$, lies alternatively in $\frakX_0$ or in $P(\fx)$.
Then
\begin{align*}
\theta(u)\bar{\ast}\theta(v)&=\eta(u)\bar{\ast}\eta(v)\\
&=\eta(u_1\cdots u_{m-1})\bar{\ast}\big(\eta(u_m)\bar{\ast}\eta(v_1)\big)\bar{\ast}\eta(v_2 \cdots v_n)\\
&=\eta(u_1\cdots u_{m-1})\bar{\ast}\theta(u_m\shpr v_1)\bar{\ast}\eta(v_2 \cdots v_n)\quad(\text{by Case 1})\\
&=\eta(u_1\cdots u_{m-1})\bar{\ast}\eta(u_m\shpr v_1)\bar{\ast}\eta(v_2 \cdots v_n)(\text{by $\iota(u\shpr v)=u\shpr v\in\bfk\frakX_\infty$})\\
&=\eta(u_1\cdots u_{m-1}u_m\shpr v_1v_2 \cdots v_n)\\
&=\eta(u\shpr v)\quad(\text{by Eq.~\meqref{eq:dd}})\\
&=\theta(u\shpr v)\quad (\text{by $\iota(u\shpr v)=u\shpr v\in\bfk\frakX_\infty$}),
\end{align*}
proving Eq.~\meqref{eq:thetaprod}.
\end{proof}

Finally, we define a linear operator $\dx$ on $\bfk\fx$ that will be identified with $\theta^{-1}\overline{D}\theta$. It suffices to define $\dx(u)$ for basis elements $u\in \fx$.  We now  proceed by induction on  $\dep_P(u)=n\geq 0$. If $n=0$, then  $\frakX_0=S(\Delta(X))$, and so $u\in S(\Delta(x))$. We can write  $u=u_1\cdots u_m$ with $u_i\in \Delta(X)$ and $m\geq 1$. We then define $\dx(u)$ by induction on $m\geq 1$.
If $m=1$, then $u\in \Delta(X)$, and  define
\begin{equation}\mlabel{eq:291}
\dx(u):=D(u).
\end{equation}
By the induction hypothesis, for  $m\geq 2$, we define
\begin{equation} \mlabel{eq:301}
    \dx(u):=\dx(u_1u_2\dots u_m)=\dx(u_1)u_2\cdots u_m+u_1\dx(u_2\cdots u_m)+\lambda \dx(u_1)\dx(u_2\cdots u_m).
\end{equation}

Suppose  $\dx(u)$ has been defined for all $u\in \frakX$ with $\dep_P(u)\leq n$ for a given $n\geq 0$. Consider $u\in \frakX_\infty$ with $\dep_P(u)=n+1$. By Eq.~\meqref{eq:deco} and $\frakX_{n+1}=\Lambda(\frakX_0,\frakX_n)$, $u=u_1\cdots u_m$ such that $u_i$ is alternatively in $\frakX_0(=S(\Delta(X))$ or in $P(\frakX_n)$. When $m=1$ and $u_i\in \frakX_0$, $\dx(u)$ has been defined by the initial step of $n=0$.  When $m=1$ and $u\in P(\fx)$, we write $u=P(\bar{u})$ for some  $\bar{u}\in\frakX_n$. Then define
\begin{equation}\mlabel{eq:dpx}
\dx(u):=\pfx(\dx(\bar{u})),
\end{equation}
where $\dx(\bar{u})$ is well-defined by the induction hypothesis, since $\dep_P(\bar{u})\leq n$.
If $m\geq 2$, then we also define recursively $\dx(u)$ by Eqs.~\meqref{eq:301} and ~\meqref{eq:dpx}.

\begin{lemma}
\mlabel{lem:dxt}
For the linear operator $\dx$ defined above,  we have
\begin{equation}\mlabel{eq:dxt}
\dx=\theta^{-1}\overline{D}\theta.
\end{equation}
\end{lemma}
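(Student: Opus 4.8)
The plan is to prove $\dx=\theta^{-1}\overline{D}\theta$ by verifying the equivalent pointwise statement $\theta(\dx(u))=\overline{D}(\theta(u))$ for every basis word $u\in\fx$; since $\dx$, $\theta$ and $\overline{D}$ are all linear and $\theta$ is a linear isomorphism (Lemma~\ref{lem:theat-dpx}), checking it on the basis $\fx$ suffices. Because $\dx$ is built by a double recursion, outer on $\dep_P(u)$ and inner on the breadth $\br(u)$, I would run the matching double induction. The ingredients I expect to use are: that $\eta$ is a morphism of operated algebras, so $\overline{D}\eta=\eta D$ and $\overline{P}\eta=\eta P$; Lemma~\ref{lem:qpx}, giving $\theta\pfx=\overline{P}\theta$; Lemma~\ref{lem:thetaprod}, giving $\theta(a\shpr b)=\theta(a)\,\bar{\ast}\,\theta(b)$ (the juxtapositions in Eqs.~\eqref{eq:301} and \eqref{eq:dpx} being read as the product $\shpr$, which coincides with concatenation in these configurations); and finally that $\bfk\frakSO(X)/\Id(S_{\Phi_{T}}(X))$ is a \pdrba of type I or II, so that $\overline{D}$ obeys the weight-$\lambda$ Leibniz rule and, crucially, $\overline{D}\,\overline{P}=\overline{P}\,\overline{D}$ (the relation $\phi_{\rmT,3}$ specialized to $b=0$).

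For the breadth-one cases, when $\dep_P(u)=0$ and $\br(u)=1$ we have $u=D^k(x)\in\Delta(X)$ and $\dx(u)=D(u)\in\fx$ by Eq.~\eqref{eq:291}, so $\theta(\dx(u))=\eta(D(u))=\overline{D}(\eta(u))=\overline{D}(\theta(u))$ immediately. The decisive case is $\br(u)=1$ with $u=P(\bar u)$ and $\dep_P(\bar u)\le n$: by Eq.~\eqref{eq:dpx} and Lemma~\ref{lem:qpx} I obtain $\theta(\dx(u))=\theta(\pfx(\dx(\bar u)))=\overline{P}(\theta(\dx(\bar u)))=\overline{P}\,\overline{D}(\theta(\bar u))$, the last equality by the outer induction hypothesis, whereas $\overline{D}(\theta(u))=\overline{D}(\theta(\pfx(\bar u)))=\overline{D}\,\overline{P}(\theta(\bar u))$ again by Lemma~\ref{lem:qpx}. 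The two expressions agree exactly because $\overline{D}\,\overline{P}=\overline{P}\,\overline{D}$ holds in the quotient. This is the only genuinely non-formal step and the heart of the argument.

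For breadth $\br(u)=m\ge 2$ I would write $u=u_1\shpr(u_2\cdots u_m)$ in its standard decomposition and expand $\dx(u)$ by Eq.~\eqref{eq:301}. Applying $\theta$ and pushing it through each product via Lemma~\ref{lem:thetaprod} turns the three terms into $\theta(\dx(u_1))\,\bar{\ast}\,\theta(u_2\cdots u_m)+\theta(u_1)\,\bar{\ast}\,\theta(\dx(u_2\cdots u_m))+\lambda\,\theta(\dx(u_1))\,\bar{\ast}\,\theta(\dx(u_2\cdots u_m))$; the inner induction hypothesis (both $u_1$ and $u_2\cdots u_m$ have strictly smaller breadth) replaces each $\theta(\dx(\cdot))$ by $\overline{D}(\theta(\cdot))$, and the weight-$\lambda$ Leibniz rule for $\overline{D}$ then recollects this into $\overline{D}\big(\theta(u_1)\,\bar{\ast}\,\theta(u_2\cdots u_m)\big)=\overline{D}(\theta(u))$, using Lemma~\ref{lem:thetaprod} once more. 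Completing the two nested inductions gives the identity on all of $\fx$. The main obstacle, as noted, is the breadth-one $P$-case: it forces the commutation relation $\phi_{\rmT,3}$ with $b=0$, which is precisely what fails for type III and explains why that case must be treated separately.
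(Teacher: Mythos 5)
Your proposal is correct and follows essentially the same route as the paper: the pointwise identity $\theta(\dx(u))=\overline{D}(\theta(u))$ is proved by an outer induction on $\dep_P(u)$ and an inner induction on the number of factors, with the base letters handled by $\overline{D}\eta=\eta D$, the $P(\bar u)$ case by Lemma~\mref{lem:qpx} together with $\overline{D}\,\overline{P}=\overline{P}\,\overline{D}$, and the higher-breadth case by the weight-$\lambda$ Leibniz rule and Lemma~\mref{lem:thetaprod}. Your identification of the commutation relation $\phi_{\rmT,3}$ (with $b=0$) as the one genuinely non-formal step, and as the reason type III needs the modified operator in Eq.~\meqref{eq:dpx2}, matches the paper's treatment.
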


\begin{proof}
For $u\in\frakX_\infty$, by Eq.~\meqref{eq:deco}, we write $=u_1u_2\dots u_m$, where $u_i$ is alternatively in $\frakX_0$ or $P(\frakX_\infty)$ and $m\geq 1$. 
Then Eq.~\meqref{eq:dxt} is equivalent to
\begin{equation}\mlabel{eq:dx}
\overline{D}(\theta (u))=\theta(\dx(u)),\quad u\in\frakX_\infty.
\end{equation}
We next prove this by induction on $\dep_P(u)\geq 0$.

\noindent
{\bf Step 1.} For the base case of $\dep_P(u)=0$, we have $u=u_1\cdots u_m\in S(\Delta(X))$ with $u_i\in\Delta(X)$ and $1\leq i\leq m$. We then proceed by a second induction on $m\geq 1$. For the base step $m=1$, by the equation $\eta(D(a))=\overline{D}(\eta(a))$ for all $a\in\frakX_\infty$ and Eq.~\meqref{eq:291}, we have 
$$\overline{D}(\theta(u))=\overline{D}(\eta(u))=\eta(D(u))=\theta(\dx(u)),$$
proving ~\meqref{eq:dx}.
Suppose Eq.~\meqref{eq:dx} holds for  $m\leq k$ (and $\dep_P(u)=0$). Consider $m=k+1$. Then
\begin{align*}\overline{D}(\theta(u))
&=\overline{D}(\eta(u_1 u_2\cdots u_m)\\
&=\overline{D}(\eta(u_1)\bar{\ast} \eta(u_2\cdots u_{m}))\\
&=\overline{D}(\eta(u_1))\bar{\ast}\eta(u_2\cdots u_{m-1})+\eta(u_1)\bar{\ast}\overline{D}(\eta(u_2\cdots u_{m})+\lambda \overline{D}(\eta(u_1))\bar{\ast}\overline{D}(\eta(u_2\cdots u_m))\\
&=\eta(\dx(u_1))\bar{\ast}\eta(u_2\cdots u_{m-1})+\eta(u_1)\bar{\ast}\eta(\dx(u_2\cdots u_{m}))+\lambda \eta(\dx(u_1))\bar{\ast}\eta(\dx(u_2\cdots u_m))\\
&\quad(\text{by the induction hypothesis on }m)\\
&=\eta(\dx(u_1)(u_2\cdots u_{m-1})+u_1\dx(u_2\cdots u_{m})+\lambda \dx(u_1)\dx(u_2\cdots u_m))\\
&=\eta(\dx(u))\quad(\text{by Eq.~\meqref{eq:301}})\\
&=\theta(\dx(u))\quad(\text{by $\dx(u)\in\bfk\frakX_\infty$}).
\end{align*}

\noindent
{\bf Step 2.} 
Assume that Eq.~\meqref{eq:dx} holds for $\dep_P(u)\leq n$. Consider $\dep_P(u)=n+1$. Write $u$ in its standard decomposition as
$$u=u_1\cdots u_m,$$
where each $u_i$, $1\leq i\leq m$,  lies alternatively in $\frakX_0$ or in $P(\frakX_n)$. We again proceed by induction on $m$. If $m=1$ and $u\in S(\Delta(X))$, then Eq.~\meqref{eq:dx} holds by the initial step of $n=0$. If $m=1$ and $u=P(\bar{u})$ for some $\bar{u}\in\frakX_n$ with $\dep_P(\bar{u})\leq n$, then by the equation $\overline{D}\;\overline{P}=\overline{P}\;\overline{D}$,
\begin{align*}
\overline{D}(\theta(u))&=\overline{D}(\overline{P}(\eta(\bar{u})))\\
&=\overline{P}(\overline{D}(\theta(\bar{u})))\\
&=\overline{P}(\theta(\dx(\bar{u}))\quad(\text{by the induction hypothesis on }\dep_P(u))\\
&=\theta(\pfx(\dx(\bar{u})))\quad(\text{by Eq.~\meqref{eq:theta-pfx}})\\
&=\theta(\dx(P(\bar{u})))\quad(\text{by Eq.~\meqref{eq:dpx}})\\
&=\theta(\dx(u)),
\end{align*}
proving Eq.~\meqref{eq:dx}.
Suppose Eq.~\meqref{eq:dx} holds for all $u\in\frakX_\infty$ with $m\leq k$ and $\dep_P(u)=n+1$. Now consider $u\in\frakX_\infty$ with  $m=k+1$. We then deduce that
\begin{align*}
\overline{D}\big(\theta(u_1u_2\dots u_m)\big)
&=\overline{D}\big(\eta(u_1)\bar{\ast}\eta(u_2\cdots u_m)\big)\\
&=\overline{D}\big(\eta(u_1)\big)\bar{\ast}\eta(u_2\cdots u_m)+\eta(u_1)\bar{\ast}\overline{D}\big(\eta(u_2\cdots u_m)\big)+\lambda\overline{D}\big(\eta(u_1)\big)\bar{\ast}\overline{D}\big(\eta(u_2\cdots u_m)\big)\\
&=\theta(\dx(u_1))\bar{\ast}\eta(u_2\cdots u_m)+\eta(u_1)\bar{\ast}\theta(\dx(u_2\cdots u_m))+\lambda\theta(\dx(u_1))\bar{\ast}\theta(\dx(u_2\cdots u_m))\\
&\quad(\text{by $u_1, \,u_2\cdots u_m\in\frakX_\infty$  and the induction hypotheses on } m \text{ and } \dep_P(u))\\
&=\eta(\dx(u_1))\bar{\ast}\eta(u_2\cdots u_m)+\eta(u_1)\bar{\ast}\eta(\dx(u_2\cdots u_m))+\lambda\eta(\dx(u_1))\bar{\ast}\eta(\dx(u_2\cdots u_m))\\
&\quad(\text{by $\dx(u_1), \,\dx(u_2\cdots u_m)\in\frakX_\infty$})\\
&=\eta(\dx(u))\quad(\text{by Eq.~\meqref{eq:301}})\\
&=\theta(\dx(u))\quad(\text{by $\dx(u)\in\bfk\frakX_\infty$}),
\end{align*}
proving Eq.~\meqref{eq:dxt}.
\end{proof}

By Theorem~\mref{thm:free-pdrba} and Lemmas~\mref{lem:qpx} -- \mref{lem:dxt}, we obtain the following main result for \pdrba of types I and II. 

\begin{theorem} Let $X$ be a set.
The quadruple $(\sha_q^{\NC}(X),\shpr,\dx,\pfx)$ along with the inclusion map $j_{X}: X\to \sha_q^{\NC}(X)$ is the free  \pdrba of types I and II on $X$. 
\mlabel{thm:main-type1}
\end{theorem}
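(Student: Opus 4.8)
The plan is to repackage the three compatibility lemmas into the single statement that $\theta$ is an \emph{isomorphism of type-$\rmT$ \pdrbas}, and then to transport the universal property established in Theorem~\mref{thm:free-pdrba} across this isomorphism. The entire mathematical substance has already been discharged in the preceding results, so what remains is organizational.

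First I would record that, by Lemma~\mref{lem:theat-dpx}, the map $\theta\colon \sha_q^{\NC}(X)=\bfk\fx \to \bfk\frakSO(X)/\Id(S_{\Phi_{T}}(X))$ is a linear isomorphism. By the definition of the transported operations in Eq.~\meqref{eq:freeiso} together with Lemmas~\mref{lem:qpx}, \mref{lem:thetaprod} and \mref{lem:dxt}, this $\theta$ satisfies
$$\theta(u\shpr v)=\theta(u)\,\bar{\ast}\,\theta(v),\qquad \theta(\pfx(u))=\overline{P}(\theta(u)),\qquad \theta(\dx(u))=\overline{D}(\theta(u))$$
for all $u,v\in\fx$, hence for all of $\sha_q^{\NC}(X)$ by bilinearity and linearity. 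Thus $\theta$ is at once an algebra homomorphism and an intertwiner for both $P$ and $D$; being bijective, it is an isomorphism of operated algebras, and therefore of type-$\rmT$ \pdrbas, for each $\rmT\in\{\rmI,\rmII\}$ uniformly (types I and II differing only through the weight $\lambda$, which the product Eq.~\meqref{eq:de} and the operator Eq.~\meqref{eq:301} already absorb).

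Next I would check compatibility with the generators. Since $X\subseteq\Delta(X)\subseteq\frakX_0\subseteq\fx$, the inclusion $j_X$ sends $x$ to $x$, while $\theta(x)=(\eta\circ\iota)(x)=\eta(x)$ is exactly the image of $x$ under the canonical generating map $X\to\bfk\frakSO(X)/\Id(S_{\Phi_{T}}(X))$ attached to the free object in Theorem~\mref{thm:free-pdrba}. Hence $\theta\circ j_X$ is that generating map. It then follows formally that $\theta$ transports freeness: given any type-$\rmT$ \pdrba $R$ and any set map $f\colon X\to R$, the unique \pdrba morphism out of $\bfk\frakSO(X)/\Id(S_{\Phi_{T}}(X))$ extending $f$ composes with $\theta$ to yield the unique \pdrba morphism $\sha_q^{\NC}(X)\to R$ extending $f$. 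This gives the claimed universal property of $(\sha_q^{\NC}(X),\shpr,\dx,\pfx)$ with $j_X$.

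The only point demanding care is bookkeeping rather than any new idea: one must confirm that the \emph{same} map $\theta$ simultaneously intertwines all three structure maps (product, $P$, and $D$), not merely each in isolation, and that it fixes the generators; granting this, ``free'' passes across the isomorphism automatically. All genuine difficulty lies upstream, in the Gr\"obner--Shirshov basis of Theorem~\mref{thm:phigsb}, the identification $\fx=\Irr(S_{\Phi_{T}}(X))$ of Proposition~\mref{p:xinfirr}, and the compatibility Lemmas~\mref{lem:qpx}--\mref{lem:dxt}.
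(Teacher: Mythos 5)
Your proposal is correct and follows essentially the same route as the paper, whose proof of this theorem is precisely the one-line citation of Theorem~\ref{thm:free-pdrba} together with Lemmas~\ref{lem:qpx}--\ref{lem:dxt}; you have merely spelled out the transport-of-structure argument that the paper leaves implicit. The details you add (that $\theta$ simultaneously intertwines all three operations, fixes the generators via $X\subseteq\Delta(X)\subseteq\frakX_0$, and hence carries the universal property across) are exactly the right bookkeeping.
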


\begin{remark} When we take $\lambda=0$ in the case (i): $b=0$, then it becomes the case when $\lambda=0$, $b= 0$, which is just the case (i) discussed in Section~\mref{s:gsbqdrb}.
That is, $P$ and $Q$ satisfies the following conditions:
\begin{enumerate}
\item   $P(x) P(y)=P(x P(y))+P(P(x) y)$,
\item   $D(xy)=D(x)y+xD(y)$,
\item  $DP(x)=PD(x)$.
\end{enumerate}
This becomes the case of \pdrbas of type I.
\end{remark}

\subsection{Explicit construction of free \pdrbas of type III}  
We finally modify the explicit construction of free \pdrbas of type I and II to the case of type III, namely when $\lambda=0, b\neq 0$. As in the case of type I and II, we still start with the isomorphism 
\begin{equation}
\Big(\bfk\frakSO(X)/\Id(S_{\Phi_{T}}(X)), \bar{\ast}, \overline{D}, \overline{P}\Big) \cong 	
\Big(\sha^{\NC}(X),\theta^{-1}\bar{\ast}(\theta\ot \theta), \theta^{-1}\overline{D}\theta, \theta^{-1}\overline{P}\theta\Big)
\mlabel{eq:freeiso3}
\end{equation} 
of free \pdrbas on $X$, as in Eq.~\meqref{eq:freeiso}. 
We then give identify the operations on $\sha^{\NC}(X)$ with explicitly defined operations $\pfx, \dx$ and $\diamond$. 

The constructions of $P_\frakX$ and $\diamond$ are exactly the same. So we will focus on the construction of $\dx$ and identify it with $\theta^{-1}\overline{D}\theta$. 

To define $\dx$, it suffices to specify $\dx(u)$ for each basis element $u \in \mathfrak{X}$, for which we proceed by induction on the depth $\dep_P(u)=n \geq 0$.

For the base depth $n=0$, we have $u \in \mathfrak{X}_0 = S(\Delta(X))$.  
Write $u = u_1 \cdots u_m$ with $u_i \in \Delta(X)$ and $m \geq 1$, and then define $\dx(u)$ by induction on $m$:
If $m=1$, then  $u \in \Delta(X)$, and set $\dx(u) = D(u)$.  
If $m \geq 2$, define recursively  
\begin{equation} \mlabel{eq:302}
    \dx(u)=\dx(u_1u_2\dots u_m)=\dx(u_1)u_2\cdots u_m+u_1\dx(u_2\cdots u_m).
\end{equation}

For the inductive step on depth, we assume $\dx(u)$ has been defined for all $u$ with $\dep_P(u) \leq n$.  
Take $u \in \mathfrak{X}_\infty$ with $\dep_P(u) = n+1$.  
By Eq.~\eqref{eq:deco} and the description $\mathfrak{X}_{n+1} = \Lambda(\mathfrak{X}_0,\mathfrak{X}_n)$, we may write  
$$
u = u_1 \cdots u_m,
$$
where the factors alternate between $\mathfrak{X}_0$ and $P(\mathfrak{X}_n)$.
We then proceed by induction on $m\geq 1$. If $m=1$ and $u \in \mathfrak{X}_0$, the definition is already covered by the base depth $n=0$.
If $m=1$ and $u = P(\bar u)$ with $\bar u \in \mathfrak{X}_n$, define  
\begin{equation}\mlabel{eq:dpx2}
\dx(u)=\pfx(\dx(\bar{u}))+bP(\bar{u}),
\end{equation}
  which is well-defined, because $\dep_P(\bar u) \leq n$.
If $m \geq 2$, define $\dx(u)$ recursively by combining Eqs.~\meqref{eq:302} and ~\meqref{eq:dpx2}.

With this modification and analogs to Lemmas~\mref{lem:qpx} -- \mref{lem:dxt}, we obtain the following main result for type III \pdrbas.

\begin{theorem}\mlabel{thm:ncfqdba}Let $X$ be a set.
The \pdrba $(\sha_{q,0}^{\NC}(X),\shpr,\qd,\qp)$ together with the set embedding $j_X:X\rar \sha_{q,0}^{\NC}(X)$ is the free  \pdrba of type III on $X$.
\end{theorem}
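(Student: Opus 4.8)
The plan is to mirror the proof of Theorem~\ref{thm:main-type1} for types I and II, isolating the single point where type III diverges, namely the behaviour of the differential operator on $P$-brackets. By Theorem~\ref{thm:free-pdrba}, the quotient $(\bfk\frakSO(X)/\Id(S_{\Phi_{\rm III}}(X)), \bar{\ast}, \overline{D}, \overline{P})$ is already the free type III \pdrba on $X$, so it suffices to show that the transported operations on $\sha_{q,0}^{\NC}(X)=\bfk\fx$ appearing in the isomorphism~\eqref{eq:freeiso3} coincide with the explicitly defined $\shpr$, $\pfx$ and $\dx$. First I would observe that Proposition~\ref{p:xinfirr} applies verbatim to type III: the leading monomials of $\phi_{\rm III,1}$, $\phi_{\rm III,2}$, $\phi_{\rm III,3}$ under $\leq_{\zdp}$ are again $P(u)P(v)$, $D(uv)$ and $DP(u)$, so $\fx=\Irr(S_{\Phi_{\rm III}}(X))$ and the linear isomorphism $\theta$ of Lemma~\ref{lem:theat-dpx} is at hand.

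Next I would record that the analogs of Lemma~\ref{lem:qpx} and Lemma~\ref{lem:thetaprod} hold without change. The Rota-Baxter relation $\phi_{\rm III,1}$ is identical to the weight-zero relation $\phi_{\rm I,1}$, and the commutation relation $\overline{D}\,\overline{P}=\overline{P}\,\overline{D}$ is never invoked in those two proofs; hence $\pfx=\theta^{-1}\overline{P}\theta$ and $\shpr=\theta^{-1}\bar{\ast}(\theta\ot\theta)$ follow by the same inductions on $\dep_P$, now with $\lambda=0$ throughout. This reduces the theorem to the type III analog of Lemma~\ref{lem:dxt}, namely $\overline{D}(\theta(u))=\theta(\dx(u))$ for all $u\in\fx$, with $\dx$ now governed by the modified recursion~\eqref{eq:dpx2}.

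I would prove this identity by the same double induction as in Lemma~\ref{lem:dxt}: an outer induction on $\dep_P(u)$ and, within each depth, an inner induction on the breadth $m$. The base depth $\dep_P(u)=0$ and all multiplicative steps ($m\geq 2$) are handled exactly as before, using $\eta D=\overline{D}\eta$ together with the weight-zero Leibniz expansion~\eqref{eq:302}. The one new computation is the inductive-depth case $m=1$, $u=P(\bar u)$ with $\bar u\in\frakX_n$, where I substitute the type III relation $\overline{D}\,\overline{P}=\overline{P}\,\overline{D}+b\,\overline{P}$ for the commuting relation, obtaining
\begin{align*}
\overline{D}(\theta(u))&=\overline{D}(\overline{P}(\eta(\bar u)))
=\overline{P}(\overline{D}(\theta(\bar u)))+b\,\overline{P}(\eta(\bar u))\\
&=\overline{P}(\theta(\dx(\bar u)))+b\,\theta(P(\bar u))
=\theta\big(\pfx(\dx(\bar u))+bP(\bar u)\big)
=\theta(\dx(u)),
\end{align*}
where the first line uses the type III relation, the second equality of the second line uses the depth-induction hypothesis together with $\pfx=\theta^{-1}\overline{P}\theta$, and the last equality is precisely~\eqref{eq:dpx2}.

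With the three transported-operation identities established, the isomorphism~\eqref{eq:freeiso3} transfers the universal property from the quotient to $(\sha_{q,0}^{\NC}(X),\shpr,\dx,\pfx)$, and the equality $\theta(x)=\eta(x)$ for $x\in X$ identifies the generators, yielding the freeness assertion. I do not anticipate a genuine obstacle: the argument is a faithful transcription of the type I and II case. The only subtlety is bookkeeping the extra summand $b\,\overline{P}$ so that the depth induction still closes; concretely one checks that the correction term $bP(\bar u)$ has the same $P$-depth as $P(\bar u)$, so that both the outer induction on $\dep_P$ and the inner induction on breadth remain applicable after the substitution.
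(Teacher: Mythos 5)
Your proposal is correct and follows essentially the same route as the paper: the paper's own treatment of type III consists precisely of reusing the isomorphism \eqref{eq:freeiso3}, keeping $\pfx$ and $\shpr$ unchanged, and modifying only the recursion for $\dx$ via Eq.~\eqref{eq:dpx2}, then invoking ``analogs of Lemmas~\ref{lem:qpx}--\ref{lem:dxt}''. Your explicit computation of the $m=1$, $u=P(\bar u)$ case using $\overline{D}\,\overline{P}=\overline{P}\,\overline{D}+b\,\overline{P}$ is exactly the step the paper leaves implicit, and it is carried out correctly.
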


\noindent {\bf Acknowledgments}: This work is supported by the NNSFC (12461002,12326324), the Jiangxi Provincial Natural Science Foundation (20224BAB201003).
S. Zheng thanks the Chern Institute of Mathematics at Nankai University for hospitality.

\noindent
{\bf Declaration of interests. } The authors have no conflicts of interest to disclose.

\noindent
{\bf Data availability. } Data sharing is not applicable as no data were created or analyzed.

\vspace{-.2cm}

\end{document}